\numberwithin{equation}{section}
\newlength{\defbaselineskip}
\theoremstyle{plain}
\newtheorem{theorem}{Theorem}[section]
\newtheorem{proposition}[theorem]{Proposition}
\newtheorem{corollary}[theorem]{Corollary}
\newtheorem{lemma}[theorem]{Lemma}
\theoremstyle{definition}
\newtheorem{definition}[theorem]{Definition}
\newtheorem{remark}[theorem]{Remark}
\newtheorem{example}[theorem]{Example}
\newtheorem{observation}[theorem]{Observation}
\newcommand{\ZZ}{{\mathbb{Z}}}
\newcommand{\lefthalfarrow}{\succ\hspace{-2pt}\mapstochar}
\newcommand{\Hom}{{\operatorname{Hom}}}
\newcommand{\Disk}{{\mathbf{D}}}
\newcommand{\red}[1]{{\color{red} #1}}
\begin{document}

\title{Universal quivers}
\author{Sergey Fomin}
\address{Department of Mathematics, University of Michigan, Ann Arbor, MI 48109, USA}
\email{fomin@umich.edu}

\author{Kiyoshi Igusa}
\address{Department of Mathematics, Brandeis University, Waltham, MA 02454, USA}
\email{igusa@brandeis.edu}

\author{Kyungyong Lee}
\address{Department of Mathematics, University of Alabama, Tuscaloosa, AL 35487, USA; and School of Mathematics, Korea Institute for Advanced Study, Seoul 02455, Republic of Korea}
\email{kyungyong.lee@ua.edu; klee1@kias.re.kr}

%\thanks{{}}

%\subjclass[2010]{13F60}
\date{November 23, 2020. Revised March 6, 2021.} %\date{}

\thanks
{\emph{2020 Mathematics Subject Classification}: 
Primary 13F60. %Cluster algebras
Secondary 
05C25, %Graphs and abstract algebra (groups, rings, fields, etc.)
05E16, % Combinatorial aspects of groups and algebras
16G20,  %Representations of quivers and partially ordered sets
18G80. %Derived categories, triangulated categories
}

\thanks{Partially supported by NSF grants DMS-1664722 and DMS-1800207
as well as by the Simons Foundation.}

\keywords{Quiver mutation, universal quiver, cluster algebra.} 

{\ }\vspace{-.2in} 

\begin{abstract}
We show that for any positive integer $n$, 
there exists a quiver $Q$ with $O(n^2)$ vertices and $O(n^2)$ edges
such that any quiver on $n$ vertices is a full subquiver
of a quiver mutation equivalent to $Q$. 
We generalize this statement to skew-symmetrizable matrices,
and obtain other related results. 
\end{abstract}

\maketitle

\vspace{-.15in} 

\section{Introduction}

Quivers and their mutations play a fundamental role in the combinatorial theory of cluster algebras~\cite{ca1, FWZ}. Among the various properties that a quiver may possess, one is especially interested in those which are both \emph{mutation-invariant} and \emph{hereditary}, i.e., preserved both by quiver mutations and by restriction to a full subquiver. 

%have the following two features:
%\begin{itemize}
%\item
%$P$ is \emph{invariant under mutations}; in other words, either all quivers in a mutation class have this property, or none of them has~it; 
%\item
%$P$ is \emph{hereditary}: if a quiver has this property, then each of its full subquivers has it as well. 
%\end{itemize}

A quiver~$Q$ (without frozen vertices) % (i.e., mutation can be applied at any vertex). 
%In this paper, we introduce and study the following notion. 
is called 
\emph{$n$-universal} if any quiver on $n$ vertices is a full subquiver of a quiver which is mutation equivalent to~$Q$. 
Our interest in this notion stems from the following simple remark. Let $P$ be a mutation-invariant and hereditary property of quivers. Then, in order to prove that any $n$-vertex quiver possesses property~$P$, it is sufficient to establish~$P$ for a single $n$-universal quiver. 

It is not at all obvious that $n$-universal quivers exist, even for small values of~$n$. 
(For any $n\ge 2$, there are infinitely many mutation classes of $n$-vertex quivers.) 
It~turns out that not only they do exist, but some of them are not too big. 
Our main result, presented in~Section~\ref{sec:constructing-universal-quivers}, is an explicit combinatorial construction that produces, for any~$n$, an $n$-universal quiver with $2n^2-n$ vertices and $7n^2-7n$ arrows. 

We demonstrate that some seemingly ``tame'' classes of quivers include universal quivers (which can be regarded as ``totally wild''). 
For any~$n$, there is an $n$-universal quiver on $O(n^2)$ vertices in which each vertex is incident to at most three arrows. 
We also construct a \emph{planar} $n$-universal quiver with $O(n^4)$ vertices and $O(n^4)$ arrows. 
This enables us to show in Section~\ref{sec:quivers-plabic} that any quiver can be embedded 
into a quiver mutation equivalent to a quiver of a \emph{plabic graph}. 
We note that the analogue of this statement for \emph{reduced} plabic graphs is false, 
see Remark~\ref{rem:reduced-plabic-not-universal}. 

In Section~\ref{sec:universal-matrices}, we extend our main result to skew-symmetrizable matrices and their mutations. The construction is largely the same, with some technical adjustments. 

Potential applications of these results and concepts are discussed 
%in a somewhat speculative way 
in the last two sections. 
In Section~\ref{sec:hereditary+universal}, we clarify the relationship between hereditariness and universality, 
and illustrate it using such hereditary properties as 
Laurent positivity and sign-coherence of $c$-vectors. 
In Section~\ref{sec:categorification}, 
%we discuss the connection between universality 
%and the problem of constructing an additive categorification of a given quiver. Specifically, 
we show that the $2$-universal quivers constructed in 
Section~\ref{sec:constructing-universal-quivers}
do not allow any Hom-finite additive categorifications. 

%We then discuss two mutation-invariant hereditary properties which are thought to always hold (though this claim remains open): the existence of a categorification of a skew-symmetrizable matrix (Section~\ref{sec:categorification}) and  the existence of a nontrivial nondegenerate weighting of a quiver (Section~\ref{sec:weight-systems}). 

\newpage

\section{Constructing universal quivers}
\label{sec:constructing-universal-quivers}

We begin by reviewing the relevant background on quiver mutations and mutation classes, generally following \cite[Sections 2.1, 2.6, 4.1]{FWZ}. 
Here and in Section~\ref{sec:universal-matrices}, 
we work with quivers without frozen vertices; so the definitions are slightly simpler than usual. 

\begin{definition}
\label{def:quiver-mutation}
A \emph{quiver} is a finite oriented graph with no loops or oriented $2$-cycles. The oriented edges of a quiver are called \emph{arrows}. For any vertex $v$ in a quiver~$Q$, the (quiver) \emph{mutation} $\mu_v$ transforms $Q$ into another quiver $Q'=\mu_v(Q)$ on the same vertex set, by performing the following three steps:
\begin{itemize}[leftmargin=.3in]
\item
for each oriented path $u\to v\to w$ passing through~$v$, add a new arrow
$u\to w$; 
\item
reverse all arrows incident to~$v$; 
\item
repeatedly remove oriented 2-cycles until there are none left.
\end{itemize}
\end{definition}

It is easy to see that quiver mutation is an involution: $\mu_v(\mu_v(Q))=Q$. 

A simple example is shown in Figure~\ref{fig:quiver-mutation}. 

\begin{figure}[ht]
\begin{center}
\begin{tikzpicture}
\coordinate (A) at (1.75,0);
\coordinate (A1) at (2.05,.08);
\coordinate (A2) at (2.05,-.08);
\coordinate (Ap) at (2,.25);
\coordinate (B) at (3.25,1.5);
\coordinate (B1) at (3.25,.3);
\coordinate (B2) at (3.25,-.3);
\coordinate (Bp) at (3.5,1.25);
\coordinate (Bl) at (3,1.25);
\coordinate (C) at (4.75,0);
\coordinate (C1) at (4.45,0.08);
\coordinate (C2) at (4.45,-.08);
\coordinate (Cl) at (4.5,.25);
\coordinate (L1) at (2.25,1);
\coordinate (L2) at (4.25,1);
%\node at (L1) {$1$};
%\node at (L2) {$1$};
\node at (A) {$u$};
\draw (A) circle [radius=0.2];
\draw[->](Ap)--(Bl);
\node at (B) {$v$};
\draw (B) circle [radius=0.2];
\draw[->](Bp)--(Cl);
%\draw[->](3.5,-0.1)--(4.5,-0.1);
\node at (C) {$w$};
\draw (C) circle [radius=0.2];
\draw[->](C1)--(A1);
\draw[->](C2)--(A2);
\end{tikzpicture}
\qquad\qquad
\begin{tikzpicture}
\coordinate (A) at (1.75,0);
\coordinate (A1) at (2.05,0);
\coordinate (A2) at (2.05,-.08);
\coordinate (Ap) at (2,.25);
\coordinate (B) at (3.25,1.5);
\coordinate (B1) at (3.25,.3);
\coordinate (B2) at (3.25,-.3);
\coordinate (Bp) at (3.5,1.25);
\coordinate (Bl) at (3,1.25);
\coordinate (C) at (4.75,0);
\coordinate (C1) at (4.45,0);
\coordinate (Cl) at (4.5,.25);
\coordinate (L1) at (2.25,1);
\coordinate (L2) at (4.25,1);
%\node at (L1) {$1$};
%\node at (L2) {$1$};
\node at (A) {$u$};
\draw (A) circle [radius=0.2];
\draw[<-](Ap)--(Bl);
\node at (B) {$v$};
\draw (B) circle [radius=0.2];
\draw[<-](Bp)--(Cl);
%\draw[->](3.5,-0.1)--(4.5,-0.1);
\node at (C) {$w$};
\draw (C) circle [radius=0.2];
\draw[->](C1)--(A1);
\end{tikzpicture}

%\vspace{-.1in}
\end{center}
\caption{Quiver mutation. Applying mutation $\mu_v$ to each of the two quivers shown above produces the other quiver.}
\vspace{-.1in}
\label{fig:quiver-mutation}
\end{figure}
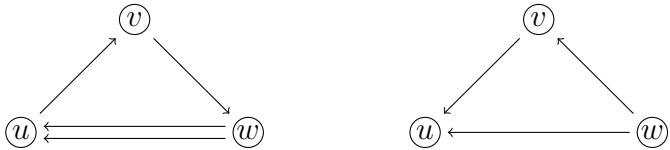

\begin{definition}
Two quivers $Q$ and $Q'$ are \emph{mutation equivalent} (denoted $Q\sim Q'$) if $Q$ can be transformed into a quiver isomorphic to~$Q'$ by a sequence of mutations. 

Mutation equivalence is an equivalence relation on the set of quivers (viewed up to isomorphism). Equivalence classes for this relation are called \emph{mutation classes}. We~denote by $[Q]$ the mutation class of a quiver~$Q$. Thus $[Q]$ consists of all (isomorphism types of) quivers mutation equivalent to~$Q$. 
\end{definition}

\begin{remark}[{cf.\ \cite[Problem 2.6.14]{FWZ}}]
There is no known algorithm for detecting whether a given pair of quivers (with the same number of vertices) are mutation equivalent to each other. Note that exhaustive search of all possible mutation scenarios does not qualify, since there is no known \emph{a~priori} upper bound on the depth of a successful search. 
\end{remark}

\begin{definition}
\label{def:full-subquiver}
Let $Q$ be a quiver, and $I$ a subset of the set of its vertices. The \emph{restriction} of $Q$ to~$I$, denoted~$Q_I$, is the induced subgraph of $Q$ supported on the vertex set~$I$. In other words, take $I$ as the new set of vertices, and keep all the arrows of~$Q$ which connect vertices in~$I$ to each other. Any quiver that can be obtained from~$Q$ by such a restriction is called a \emph{full subquiver} of~$Q$. 
\end{definition}

The following statement is well known, and easy to check.

\begin{lemma}
\label{lem:restriction-mutation}
Restriction commutes with mutation. More precisely, if $Q_I$ is a full sub\-quiver of~$Q$, and $v$ is a vertex in~$I$, then $\mu_v(Q_I)=(\mu_v(Q))_I$. 
\end{lemma}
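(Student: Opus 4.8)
The plan is to encode quivers by their signed adjacency matrices, where the three-step mutation becomes a single local formula, and then observe that restriction to $I$ is nothing but passage to a principal submatrix — an operation that visibly commutes with a local update. First I would recall that a quiver $Q$ with no loops or oriented $2$-cycles on a vertex set $V$ is the same data as a skew-symmetric matrix $B=B(Q)=(b_{uw})_{u,w\in V}$ over $\ZZ$, where $b_{uw}$ is the number of arrows $u\to w$ minus the number of arrows $w\to u$; the absence of $2$-cycles guarantees that all arrows between a given pair of vertices point the same way, so that $B(Q)$ records $Q$ faithfully. Under this dictionary, the restriction $Q_I$ corresponds to the principal submatrix $B(Q_I)=(b_{uw})_{u,w\in I}$.

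Next I would verify (this is the standard translation, cf.\ the cited sections of \cite{FWZ}) that the three operations defining $\mu_v$ — adding an arrow $u\to w$ for every path $u\to v\to w$, reversing all arrows at $v$, and cancelling $2$-cycles — are captured by the matrix mutation rule at $k=v$:
\[
b'_{uw}=\begin{cases}
-b_{uw} & \text{if } u=k \text{ or } w=k,\\[2pt]
b_{uw}+\tfrac12\bigl(|b_{uk}|\,b_{kw}+b_{uk}\,|b_{kw}|\bigr) & \text{otherwise.}
\end{cases}
\]
The point of passing to $B$ is that the $2$-cycle cancellation, which is the only genuinely non-local-looking step, becomes automatic: it is subsumed in the signed arithmetic of the $b_{uw}$.

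Then the lemma is immediate. Fix $v\in I$ and set $k=v$. For any $u,w\in I$, the value $b'_{uw}$ prescribed above depends only on the three entries $b_{uw}$, $b_{uk}$, $b_{kw}$; since $u,w,k$ all lie in $I$, these entries coincide whether read off $B(Q)$ or off its principal submatrix $B(Q_I)$. Hence the $(u,w)$ entry of $B(\mu_v(Q))$ restricted to $I$ equals the $(u,w)$ entry of $B(\mu_v(Q_I))$ for all $u,w\in I$; that is, $\bigl(B(\mu_v(Q))\bigr)_I=B\bigl(\mu_v(Q_I)\bigr)$. Translating back through the dictionary yields $(\mu_v(Q))_I=\mu_v(Q_I)$.

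The only real content is the verification that the matrix formula reproduces the three combinatorial steps, and I expect that correspondence — especially checking that the removal of $2$-cycles matches the signed formula — to be the main, and only mildly delicate, obstacle; everything after it is bookkeeping about which indices appear. One could instead argue directly, without the matrix language: for $u,w\in I\setminus\{v\}$ the arrows between $u$ and $w$ produced by $\mu_v$ depend only on the arrows $u\to v$ and $v\to w$ (all internal to $I$ since $v\in I$) together with the arrows already present between $u$ and $w$, while arrows incident to $v$ are merely reversed. Keeping track of the cancellations by hand is, however, precisely where the signed matrix formulation saves effort, so I would prefer the matrix route.
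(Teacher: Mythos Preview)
Your argument is correct; the paper itself gives no proof, merely declaring the lemma ``well known, and easy to check.'' Your matrix-based verification is exactly the standard one, and indeed the paper later adopts this same formulation when extending the lemma to skew-symmetrizable matrices in Section~\ref{sec:universal-matrices} (noting that $\mu_k(B_I)=(\mu_k(B))_I$ for $k\in I$).
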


Lemma~\ref{lem:restriction-mutation} implies that any quiver in 
%the mutation class~
$[Q_I]$ is a full subquiver of a quiver in~$[Q]$. 

\medskip\pagebreak[3]

We are now ready to introduce the main new concept of this paper. 

\begin{definition}
\label{def:n-universal}
Let $n\ge 2$ be an integer. 
%A~mutation class~$\Q$ is called \emph{$n$-universal} 
A~quiver $Q$ is called \emph{$n$-universal} %if its mutation class $[Q]$ 
if every quiver on $n$ vertices is a full subquiver of a quiver mutation equivalent to~$Q$. 
\end{definition}

If a quiver is $n$-universal, then it is also $m$-universal for any $m\le n$. 

\begin{example}
The special case $n=2$ of Definition~\ref{def:n-universal} is the easiest one (but still important). A~quiver is $2$-universal if for any nonnegative integer~$k$, the given quiver can be mutated into a quiver containing two vertices connected by $k$ ``parallel'' arrows. 
Figure~\ref{fig:2-universal-quiver} shows an example of a 2-universal quiver on $3$~vertices. 
\end{example}

\begin{figure}[ht]
\begin{center}
\begin{tikzpicture}[scale=1,line width=0.6pt]
\node at (1.75,0) {1};
\draw (1.75,0) circle [radius=0.25];
\draw[->](2.1,0)--(2.9,0);
\node at (3.25,0) {2};
\draw (3.25,0) circle [radius=0.25];
\draw[->](3.6,0.1)--(4.4,0.1);
\draw[->](3.6,-0.1)--(4.4,-0.1);
\node at (4.75,0) {3};
\draw (4.75,0) circle [radius=0.25];
\end{tikzpicture}
\end{center}
\caption{A $2$-universal quiver. The mutation sequence $\mu_2,\mu_3,\mu_2,\mu_3,...$ produces every quiver on 2~vertices as a full subquiver supported either on the vertices 1~and~2, or on the vertices 1 and~3. }
%\vspace{-.2in}
\label{fig:2-universal-quiver}
\end{figure}
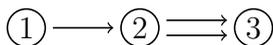

\begin{remark}
We are not aware of any algorithm for deciding whether a given quiver~$Q$ is $2$-universal, or whether $Q$ can be mutated to a quiver containing a pair of vertices connected by $k$ parallel arrows, cf.\ \cite[Remark~4.1.13]{FWZ}. For larger values of~$n$, the problem of detecting $n$-universality seems harder yet. 
\end{remark}

A quiver~$Q$ has \emph{finite mutation type} if its mutation class $[Q]$ is finite; in other words, there are finitely many (pairwise non-isomorphic) quivers mutation equivalent~to~$Q$. 

\begin{example}
\label{example:grid-quivers}
Any quiver of finite mutation type is \underbar{not} $2$-universal. One example of such a quiver is the \emph{grid quiver} $A_2\boxtimes A_5$, shown in Figure~\ref{fig:grid} on the left. On the other hand, the grid quiver $A_2\boxtimes A_6$ (see Figure~\ref{fig:grid} on the right) is $2$-universal, as it can be mutated to a quiver that restricts to the quiver from Figure~\ref{fig:2-universal-quiver}. 
\end{example}

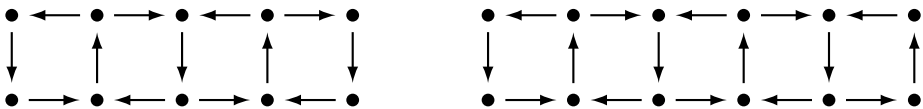
\begin{figure}[ht]
\begin{center}
\setlength{\unitlength}{3.2pt}
\begin{picture}(40,10)(0,0)
\multiput(0,0)(10,0){5}{\circle*{1.5}}
\multiput(0,10)(10,0){5}{\circle*{1.5}}
\thicklines
\multiput(2,0)(20,0){2}{\vector(1,0){6}}
\multiput(18,0)(20,0){2}{\vector(-1,0){6}}
\multiput(12,10)(20,0){2}{\vector(1,0){6}}
\multiput(8,10)(20,0){2}{\vector(-1,0){6}}

\multiput(10,2)(20,0){2}{\vector(0,1){6}}
\multiput(0,8)(20,0){3}{\vector(0,-1){6}}
\end{picture}
\qquad\qquad
\begin{picture}(50,10)(0,0)
\multiput(0,0)(10,0){6}{\circle*{1.5}}
\multiput(0,10)(10,0){6}{\circle*{1.5}}
\thicklines
\multiput(2,0)(20,0){3}{\vector(1,0){6}}
\multiput(18,0)(20,0){2}{\vector(-1,0){6}}
\multiput(12,10)(20,0){2}{\vector(1,0){6}}
\multiput(8,10)(20,0){3}{\vector(-1,0){6}}

\multiput(10,2)(20,0){3}{\vector(0,1){6}}
\multiput(0,8)(20,0){3}{\vector(0,-1){6}}
\end{picture}

\end{center}
\caption{The grid quivers $A_2\boxtimes A_5$ and $A_2\boxtimes A_6$.} 
\label{fig:grid}
\end{figure}
%\vspace{-.2in}

\begin{remark}
\label{rem:grid-not-3-universal}
The grid quiver $A_k\boxtimes A_\ell$ is \underbar{not} $3$-universal for any $k$ and~$\ell$. The reasons will be explained in Proposition~\ref{prop:grid-no-markov}. 
\end{remark}

A quiver is \emph{acyclic} if it does not contain oriented cycles. 
A quiver is called \emph{mutation-acyclic} if it can be mutated to an acyclic quiver.

\begin{remark}
\label{rem:acyclic-not-3-universal}
Any mutation-acyclic quiver $Q$ is \underbar{not} $3$-universal, for the following reason.  It has been shown in~\cite{BMR} that a full subquiver of a mutation-acyclic quiver is mutation-acyclic. If $Q$ were $3$-universal, then any $3$-vertex quiver would be a full subquiver of a quiver in~$[Q]$, and consequently would have to be mutation-acyclic. This contradicts the well known fact that some 3-vertex quivers are not mutation-acyclic. One easy example is the \emph{Markov quiver}, see Figure~\ref{fig:Markov-quiver}. % \cite[Example~2.6.4]{FWZ}. 
\end{remark}

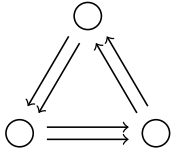
\begin{figure}[ht]
\begin{center}
\begin{tikzpicture}[scale=0.9,line width=0.6pt]
\draw (0,0) circle [radius=0.2];
\draw[->](0.4,0.09)--(1.6,0.09);
\draw[->](0.4,-0.09)--(1.6,-0.09);
\draw (2,0) circle [radius=0.2];
\draw (1,1.73) circle [radius=0.2];
%\draw[<-](1.2,1.38)--(1.8,0.35);
\draw[<-](1.12,1.33)--(1.72,0.30);
\draw[<-](1.28,1.43)--(1.88,0.40);
\draw[->](0.88,1.33)--(0.28,0.30);
\draw[->](0.72,1.43)--(0.12,0.40);
\end{tikzpicture}
\vspace{-.1in}
\end{center}
\caption{The Markov quiver. }
%\vspace{-.2in}
\label{fig:Markov-quiver}
\end{figure}

The following is our main result. 

\begin{theorem}
\label{thm:main-thm-quivers}
For any integer $n\ge 2$, there exists an $n$-universal quiver with $2n^2-n$ vertices and $7n^2-7n$ arrows. 
\end{theorem}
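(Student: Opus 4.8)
The plan is to build the $n$-universal quiver $Q$ from $n$ designated \emph{main} vertices $v_1,\dots,v_n$, together with a separate \emph{gadget} $H_{ij}$ attached to each pair $\{v_i,v_j\}$ with $i<j$. Each gadget will consist of $4$ auxiliary vertices, so the total number of vertices is $n+4\binom n2=2n^2-n$, and each gadget will carry $14$ arrows, with no arrows among the main vertices and none between distinct gadgets, giving $14\binom n2=7n^2-7n$ arrows, as required. The crucial structural features of the construction will be that the auxiliary vertices of $H_{ij}$ are joined, among the main vertices, only to $v_i$ and $v_j$, and that the gadgets are pairwise vertex-disjoint and mutually non-adjacent. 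Throughout the argument we will \emph{never} mutate at a main vertex.

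First I would prove a \emph{locality} lemma: if we only ever mutate at auxiliary vertices of a single gadget $H_{ij}$, then among all arrows joining two main vertices, only the arrows between $v_i$ and $v_j$ can change. Indeed, a mutation at a vertex $w\in H_{ij}$ creates arrows only between neighbors of $w$, and the neighbors of $w$ lie in $H_{ij}\cup\{v_i,v_j\}$; since we never mutate a main vertex, an arrow joining $v_i$ and $v_k$ with $k\neq j$ can change only by mutating a common neighbor of $v_i$ and $v_k$, of which $H_{ij}$ has none. The same observation shows that operating inside $H_{ij}$ leaves every other gadget, together with its attachment to the main vertices, untouched. Consequently the $\binom n2$ pairs are completely independent: we may process them one at a time, in any order, and the effects on the main-to-main arrows accumulate without interference.

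The heart of the argument is a \emph{single-pair pumping} lemma: using mutations supported on the four auxiliary vertices of $H_{ij}$ alone, one can transform the net number of arrows between $v_i$ and $v_j$ into any prescribed signed integer. I would design $H_{ij}$ around the amplification mechanism of the $2$-universal example in Figure~\ref{fig:2-universal-quiver}: three of the auxiliary vertices form a ``ping-pong'' pump whose alternating mutations create arbitrarily many parallel arrows between two auxiliary vertices, entirely within the gadget; the remaining structure then \emph{transfers} these arrows to the pair $\{v_i,v_j\}$ by two further mutations at auxiliary vertices adjacent to $v_i$ and to $v_j$, turning a path $v_i\to a\to b\to v_j$ into the desired bundle of arrows $v_i\to v_j$. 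Choosing the number of pump iterations realizes any multiplicity; arranging the attachment in either orientation, or running the pump zero times, realizes either direction or none at all. Verifying that a single fixed $4$-vertex, $14$-arrow gadget simultaneously supports all of these behaviors---producing every element of $\ZZ$ as the signed multiplicity between $v_i$ and $v_j$, while confining every side effect to its own auxiliary vertices---is the main obstacle; it amounts to a finite but delicate mutation computation, and it is here that the precise numerology ($4$ vertices, $14$ arrows) will be pinned down.

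Granting the two lemmas, the theorem follows quickly. Given an arbitrary quiver $R$ on the vertex set $\{v_1,\dots,v_n\}$, encode it by the net signed multiplicities $m_{ij}\in\ZZ$ between $v_i$ and $v_j$. Starting from our fixed quiver $Q$, in which no two main vertices are joined, process the pairs in turn, applying to each $H_{ij}$ the mutation sequence furnished by the pumping lemma that makes the multiplicity between $v_i$ and $v_j$ equal to $m_{ij}$. By the locality lemma, each step fixes its own target pair and disturbs no other main-to-main arrow, so after all $\binom n2$ pairs are processed we reach a quiver $\widetilde Q\sim Q$ whose restriction to $\{v_1,\dots,v_n\}$ is exactly $R$. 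Since $R$ was arbitrary, $Q$ is $n$-universal.
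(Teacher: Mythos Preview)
Your overall architecture is exactly the paper's: Lemma~\ref{lem:gluing} is precisely your locality-plus-gluing argument, and the ``single-pair pumping lemma'' is the hypothesis of that lemma. So the reduction to a single $6$-vertex, $14$-arrow gadget (two main vertices, four auxiliary) is correct and matches the paper's strategy.

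The gap is in the gadget itself. You correctly identify this as ``the main obstacle,'' but your sketch does not discharge it. Two points in particular need attention. First, the phrase ``arranging the attachment in either orientation'' is not available to you: the quiver $Q$ is fixed once and for all, so both signs of $m_{ij}$ must be reachable by mutation sequences inside a \emph{single} fixed gadget, not by re-attaching it. The paper handles this by exhibiting two explicit sequences (forward $1,2,3,4,1,2,\dots$ and backward $4,3,2,1,4,3,\dots$) on the extended Somos-$4$ quiver (Figure~\ref{fig:extended-somos-4}) and verifying by direct computation that they realize all nonnegative, respectively all nonpositive, multiplicities. Second, your proposed mechanism---a three-vertex ping-pong pump producing parallel arrows between two auxiliary vertices, followed by a ``transfer'' step---is not obviously implementable within four auxiliary vertices: the Figure~\ref{fig:2-universal-quiver} pump produces arrows between the \emph{fixed} vertex and one of the two mutating vertices, not between two auxiliary vertices that then feed a clean $v_i\to a\to b\to v_j$ path; making the transfer work without side effects on $v_i,v_j$ during the pumping phase, and fitting the whole thing into $14$ arrows, requires a concrete design and verification that you have not supplied. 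The paper's contribution at this step is precisely to exhibit such a gadget (in fact two, Figures~\ref{fig:extended-somos-4} and~\ref{fig:double-4-cycle}) and to check the required periodicity by computer-assisted calculation (Lemma~\ref{lem:6-vertex-quivers}); no conceptual mechanism of the ping-pong-plus-transfer type is offered or needed.
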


The proof of Theorem~\ref{thm:main-thm-quivers} relies on two key lemmas.

\begin{lemma}
\label{lem:gluing}
Let $Q$ be a quiver with $k+2$ vertices, including vertices $u$ and~$v$, and $\ell$ arrows. Assume that for any nonnegative (resp., negative) integer~$m$, there exists a sequence of mutations, none of them at $u$ or~$v$, which transforms~$Q$ into a quiver that has exactly $m$ arrows directed from $u$ to~$v$ (resp., $|m|$ arrows directed from~$v$~to~$u$). \linebreak
Then for any $n$, there is an $n$-universal quiver with $n+k\binom{n}{2}$ vertices and $\ell\binom{n}{2}$ arrows. 
\end{lemma}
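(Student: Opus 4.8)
The plan is to build the $n$-universal quiver by \emph{gluing together} $\binom{n}{2}$ copies of $Q$, one for each pair among $n$ distinguished \emph{main} vertices. Fix main vertices $1,\dots,n$. For each pair $i<j$, take a fresh copy $Q^{(ij)}$ of $Q$ in which $u$ and $v$ are identified with $i$ and $j$ respectively, while the remaining $k$ vertices are new and private to this copy; write $V_{ij}$ for its vertex set. Let $\widehat Q$ be the quiver on $\{1,\dots,n\}\cup\bigcup_{i<j}(V_{ij}\setminus\{i,j\})$ obtained by superimposing all the $Q^{(ij)}$. I would first check that $\widehat Q$ is a genuine quiver. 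Any two distinct copies share at most one vertex, namely a common main vertex; hence no two-element set of vertices is contained in two different copies, so no arrow is shared and no pair is joined by arrows coming from different copies. Therefore superimposing the copies creates neither loops nor opposed pairs of arrows, and $\widehat Q$ has no loops or $2$-cycles. The counts are then immediate: there are $n+k\binom n2$ vertices, and since the copies are arrow-disjoint, $\ell\binom n2$ arrows, as required.

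The heart of the argument is a \emph{locality} (or independence) property: mutating $\widehat Q$ at an internal vertex of a single copy affects only that copy. Call the arrows lying inside some $V_{ij}$ the \emph{$V_{ij}$-arrows}. I claim that throughout any sequence of mutations performed only at vertices of $V_{ij}\setminus\{i,j\}$, every internal vertex of $Q^{(ij)}$ keeps all of its neighbors inside $V_{ij}$. This holds at the start by construction, and is preserved inductively: by Definition~\ref{def:quiver-mutation}, mutating at such a vertex $w$ only reverses arrows at $w$ and inserts arrows $a\to b$ coming from paths $a\to w\to b$, all of whose endpoints are neighbors of $w$ and hence (by the inductive hypothesis) lie in $V_{ij}$. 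Consequently each such mutation changes only $V_{ij}$-arrows, leaving every other copy --- and in particular all arrows between main vertices other than $\{i,j\}$ --- untouched. Moreover, since the internal vertices of $Q^{(ij)}$ have no neighbors outside $V_{ij}$, Lemma~\ref{lem:restriction-mutation} shows that such a mutation of $\widehat Q$ induces on $\widehat Q_{V_{ij}}$ exactly the corresponding mutation of the isolated copy $Q$.

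With this in hand, universality follows by treating the copies one at a time. Let $T$ be an arbitrary quiver on the vertex set $\{1,\dots,n\}$, recording for each pair $i<j$ a signed arrow multiplicity $m_{ij}$ (positive for arrows $i\to j$, negative for $j\to i$). For each pair in turn I apply, to the vertices of $V_{ij}\setminus\{i,j\}$, the mutation sequence supplied by the hypothesis that realizes $m_{ij}$ arrows between $u$ and $v$ in $Q$; by the locality property this installs exactly $m_{ij}$ arrows between $i$ and $j$ while disturbing no other main-to-main arrow, and it leaves every copy not yet processed still equal to a pristine copy of $Q$ (distinct copies share only a main vertex, so earlier mutations alter none of its internal arrows). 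After all $\binom n2$ pairs are processed, the full subquiver of the resulting quiver on $\{1,\dots,n\}$ has precisely $m_{ij}$ arrows between each $i$ and $j$, i.e.\ it is~$T$. Hence $\widehat Q$ is $n$-universal.

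I expect the main obstacle to be a fully rigorous treatment of this locality/invariance bookkeeping --- verifying both that mutations confined to one copy neither leak arrows into the rest of $\widehat Q$ nor alter the main-to-main arrows of other copies, and that they are unaffected by mutations already performed elsewhere --- since everything else (the vertex and arrow counts, and the per-copy application of the hypothesis) is routine once the copies are known to behave independently.
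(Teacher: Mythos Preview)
Your proof is correct and follows essentially the same construction as the paper: glue $\binom{n}{2}$ copies of $Q$ along pairs of main vertices, then use the hypothesis to independently tune each copy's $u$--$v$ arrow count. Your treatment is in fact somewhat more careful than the paper's, which simply asserts that ``these mutations will not interact across different copies~$Q_{\{i,j\}}$ because there are no arrows connecting unmarked vertices from different copies to each other,'' whereas you spell out the inductive locality argument and the well-definedness of the glued quiver.
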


\begin{proof}
We construct the desired $n$-universal quiver $\overline Q$ by amalgamating (gluing) $\binom{n}{2}$ copies of the quiver~$Q$. 
We start with a totally disconnected quiver $Q_\bullet$ with $n$ vertices and no arrows. 
We then take $\binom{n}{2}$ disjoint copies of~$Q$, denoted $Q_{\{i,j\}}$, where $\{i,j\}$ runs over all $2$-element subsets of vertices in~$Q_\bullet$. 
In each of these copies, we mark the two vertices corresponding to the special vertices $u$ and $v$ in the original quiver~$Q$. 
Now, for each pair of vertices $i,j$ in~$Q_\bullet$, we glue $Q_{\{i,j\}}$ to~$Q_\bullet$ by identifying  $i$ and~$j$ with the marked vertices in~$Q_{\{i,j\}}$. (Any of the two possible identifications would work: either glue $i$ to~$u$ and $j$ to~$v$, or the other way around.) The resulting quiver $\overline Q$ will have $n+k\binom{n}{2}$ vertices and $\ell\binom{n}{2}$ arrows. It is not hard to see that $\overline Q$ is \hbox{$n$-universal}; in fact, any $n$-vertex quiver can be obtained by restricting a quiver in $[\overline Q]$ to the vertex set of~$Q_\bullet$. To see that, note that for any pair $i,j$ of vertices in~$Q_\bullet$, we can achieve the desired number of arrows between $i$ and~$j$ (with the desired direction) by performing mutations at the unmarked vertices in~$Q_{\{i,j\}}$. Moreover these mutations will not interact across different copies~$Q_{\{i,j\}}$ because there are no arrows connecting unmarked vertices from different copies to each other. 
\end{proof}

\pagebreak[3]

\begin{lemma}
\label{lem:6-vertex-quivers}
Each of the two quivers shown in Figures~\ref{fig:extended-somos-4}
and~\ref{fig:double-4-cycle}
satisfies the conditions in Lemma~\ref{lem:gluing}, for the vertices $u$ and~$v$ indicated in the drawings. 
\end{lemma}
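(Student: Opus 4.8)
The plan is to treat each of the two $6$-vertex quivers (each having $u$, $v$, and four auxiliary vertices, and fourteen arrows, matching the counts $k=4$, $\ell=14$ that Theorem~\ref{thm:main-thm-quivers} forces) as a self-contained \emph{pump}. We never mutate $u$ or~$v$; instead we drive the signed multiplicity of arrows between $u$ and~$v$ to an arbitrary integer by mutating only the four auxiliary vertices. I~would record each quiver by its skew-symmetric exchange matrix $B=(b_{ij})$, where $b_{uv}$ is the number of arrows $u\to v$ (negative when they point $v\to u$), and rephrase Definition~\ref{def:quiver-mutation} as the matrix mutation rule, so that every assertion becomes a statement about the integer matrix~$B$. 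In these terms, the hypothesis of Lemma~\ref{lem:gluing} says exactly: using only mutations $\mu_w$ with $w\notin\{u,v\}$, realize every integer value of~$b_{uv}$.

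The core step is to exhibit an explicit finite mutation sequence $\sigma=\mu_{w_r}\cdots\mu_{w_1}$, with all $w_i$ auxiliary, that is \emph{mutation-periodic}: applying $\sigma$ restores the auxiliary-to-auxiliary subquiver and the overall combinatorial pattern, changing only the multiplicities incident to $\{u,v\}$. This is the sort of periodicity attached to the Somos recurrences, and it is why the Somos-type engine is chosen---the four auxiliary vertices act as a relay in which mutating one of them creates a fresh $u\to v$ arrow along a path $u\to w_i\to v$, after which the remaining mutations ``reload'' the local configuration so the step can be repeated. Setting $Q^{(j)}=\sigma^j(Q)$, I~would show that the matrices $B^{(j)}$ obey an affine recurrence in~$j$ along which $b_{uv}^{(j)}$ increases by a fixed positive amount; iterating thus makes $b_{uv}$ grow without bound. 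I~expect the increment can be arranged to be exactly one, so that the iterates already realize every nonnegative integer; if the natural period yields a larger increment, a short auxiliary sequence that changes $b_{uv}$ by one while restoring the engine fills in the remaining values.

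Negative values are handled by running the pump in reverse. Because each mutation is an involution, the reversed word $\sigma^{-1}=\mu_{w_1}\cdots\mu_{w_r}$ is again a sequence at auxiliary vertices, and it decreases $b_{uv}$ by the same amount at each pass; applying it repeatedly to~$Q$ drives $b_{uv}$ down through~$0$ into negative territory, i.e.\ produces the required arrows $v\to u$. Alternatively, should the drawing reveal a symmetry of $Q$ that interchanges $u$ and~$v$ while reversing all arrows, the negative case would follow formally from the positive one, using that mutation commutes with passage to the opposite quiver. Either way, the forward and backward iterations together realize every $m\in\ZZ$, which is precisely what Lemma~\ref{lem:gluing} requires of the quivers in Figures~\ref{fig:extended-somos-4} and~\ref{fig:double-4-cycle}.

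The main obstacle is the bookkeeping inside the core step: one must verify \emph{simultaneously} that the auxiliary-to-auxiliary arrows return exactly to their initial state after~$\sigma$ (so that iteration is legitimate) and that $b_{uv}$ advances by the claimed amount, all while correctly tracking arrow creation and the cancellation of oriented $2$-cycles at each of the $r$ mutations. I~anticipate that the multiplicities on the arrows joining $\{u,v\}$ to the auxiliary vertices themselves drift affinely along the iteration, so ``restoring the quiver'' means restoring it only up to this drift; separating the genuinely periodic part of $B^{(j)}$ from the drifting part is the delicate point. I~would settle it by evaluating the matrix mutation rule step by step through one full period, separately for each of the two quivers---a finite, completely explicit computation.
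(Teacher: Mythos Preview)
Your plan is essentially the paper's own proof: exhibit an explicit periodic mutation word at the four auxiliary vertices, verify by direct (computer-assisted) calculation that one full pass shifts $b_{uv}$ by a fixed positive amount, and run the reversed word for the opposite sign. The paper does exactly this, using the cyclic words $(1,2,3,4)$ for Figure~\ref{fig:extended-somos-4} and $(1,3,2,4)$ for Figure~\ref{fig:double-4-cycle}. Two places where the actual computation is simpler than you anticipate: (i)~there is no affine drift in the auxiliary-to-$\{u,v\}$ arrows---after one full period the \emph{entire} quiver is restored except for $b_{uv}$---so the ``delicate separation of periodic and drifting parts'' you worry about never arises (and, incidentally, this full periodicity is what makes the reversed word genuinely decrement~$b_{uv}$, not involutivity per~se); (ii)~the increment per period exceeds~$1$ (it is $8$ over a period of length~$60$ for the first quiver, $4$ over a period of length~$4$ for the second), but the intermediate values of $b_{uv}$ along one period already realize every integer in the relevant range, so no extra fill-in sequence is needed.
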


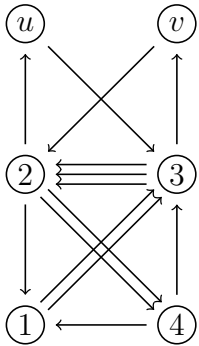
\begin{figure}[ht]
\begin{center}
\begin{tikzpicture}[scale=1,line width=0.6pt]
\node at (0,4) {$u$};
\draw (0,4) circle [radius=0.25];
\node at (2,4) {$v$};
\draw (2,4) circle [radius=0.25];
\node at (0,0) {1};
\draw (0,0) circle [radius=0.25];
\node at (0,2) {2};
\draw (0,2) circle [radius=0.25];
\node at (2,2) {3};
\draw (2,2) circle [radius=0.25];
\node at (2,0) {4};
\draw (2,0) circle [radius=0.25];

\draw[->](0,2.4)--(0,3.6);
\draw[->](2,2.4)--(2,3.6);
\draw[->](0.3,3.7)--(1.7,2.3);
\draw[<-](0.3,2.3)--(1.7,3.7);

\draw[<-](0,0.4)--(0,1.6);
\draw[->](2,0.4)--(2,1.6);
\draw[<-](0.4,0)--(1.6,0);

\draw[->](0.3,0.2)--(1.8,1.7);
\draw[->](0.2,0.3)--(1.7,1.8);
\draw[->](0.3,1.8)--(1.8,0.3);
\draw[->](0.2,1.7)--(1.7,0.2);

\draw[<-](0.4,2)--(1.6,2);
\draw[<-](0.4,2.13)--(1.6,2.13);
\draw[<-](0.4,1.87)--(1.6,1.87);
\end{tikzpicture}
\end{center}
\caption{The extended Somos-4 quiver.
%Quivers on 6 vertices satisfying the conditions in Lemma~\ref{lem:gluing}. 
}
%\vspace{-.2in}
\label{fig:extended-somos-4}
\end{figure}

\begin{figure}[ht]
\begin{center}
\begin{tikzpicture}[scale=1,line width=0.6pt]
\node at (1,2) {$u$};
\draw (1,2) circle [radius=0.25];
\node at (3,2) {$v$};
\draw (3,2) circle [radius=0.25];
\node at (0,0) {4};
\draw (0,0) circle [radius=0.25];
\node at (0,4) {1};
\draw (0,4) circle [radius=0.25];
\node at (4,4) {2};
\draw (4,4) circle [radius=0.25];
\node at (4,0) {3};
\draw (4,0) circle [radius=0.25];

\draw[->] (-0.08,0.4)--(-0.08,3.6);
\draw[->](0.08,0.4)--(0.08,3.6);
\draw[<-](3.92,0.4)--(3.92,3.6);
\draw[<-](4.08,0.4)--(4.08,3.6);
\draw[<-](0.4,-0.08)--(3.6,-0.08);
\draw[<-](0.4,0.08)--(3.6,0.08);
\draw[->](0.4,3.92)--(3.6,3.92);
\draw[->](0.4,4.08)--(3.6,4.08);

\draw[->](0.18,0.36)--(0.82,1.64);
\draw[<-](0.18,3.64)--(0.82,2.36);
\draw[->](3.82,0.36)--(3.18,1.64);
\draw[<-](3.82,3.64)--(3.18,2.36);

\draw[<-](0.3,0.2)--(2.7,1.8);
\draw[->](0.3,3.8)--(2.7,2.2);
\draw[->](3.7,3.8)--(1.3,2.2);
\draw[<-](3.7,0.2)--(1.3,1.8);
\end{tikzpicture}
\end{center}
\caption{Another quiver on 6 vertices satisfying the conditions in Lemma~\ref{lem:gluing}. }
\vspace{-.2in}
\label{fig:double-4-cycle}
\end{figure}
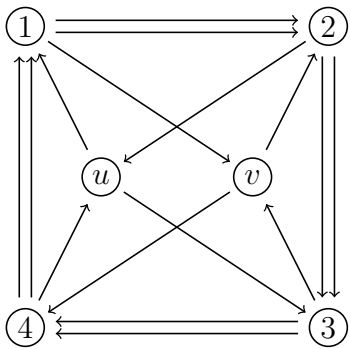

\begin{proof}
The proof relies on straightworward but tedious calculations, preferably done using one of the widely available software packages for quiver mutations~\cite{keller-applet, musiker-stump}. 

For the quiver shown in Figure~\ref{fig:extended-somos-4}, repeatedly apply the mutations~at the vertices labeled $1,2,3,4,1,2,3,4,1,2,3,4,\dots$
(in this order). Computations show that the resulting quivers have the following number of arrows directed from $v$ to~$u$:
\begin{equation}
\label{eq:0011111...}
0, 0, \underbrace{1,1,\dots,1,1}_{12},2,2,2,\underbrace{3,3,\dots,3,3}_{12},4,4,4,\underbrace{5,5,\dots,5,5}_{12},6,6,6,\dots
\end{equation}
(the leftmost entry corresponds to the original quiver). 
After 60 mutations, we recover the original quiver, with additional 8~arrows directed from $v$ to~$u$, and the process continues. To get the opposite orientation, i.e., arrows directed from $u$ to~$v$, take the original quiver and apply mutations at the vertices $4,3,2,1,4,3,2,1,4,3,2,1,\dots$ (in this order). 
In the resulting quivers, the number~of arrows directed from $u$ to~$v$ will again be given by the sequence~\eqref{eq:0011111...}, and we are~done. 
\pagebreak[3]

For the quiver shown in Figure~\ref{fig:double-4-cycle}, repeatedly apply the mutations~at the vertices labeled $1,3,2,4,1,3,2,4,1,3,2,4,\dots$ (in this order). The resulting quivers have the following number of arrows directed from $u$ to~$v$:
\begin{equation}
\label{eq:00112334...}
%0,0,1,1,2,3,3,4,4,4,5,5,6,7,7,8,8,8,\dots
0,1,2,3,4,5,6,7,8,9,\dots
\end{equation}
(the first~0 corresponds to the original quiver). 
The quiver obtained after the first 4~mutations differs from the original quiver by having extra 4~arrows $u\!\longrightarrow \!v$, so the pattern continues. To get the opposite orientation, with arrows directed from $v$ to~$u$, apply mutations at the vertices $4,2,3,1,4,2,3,1,4,2,3,1,\dots$ (in this order) to the original quiver. 
In the resulting quivers, the number~of arrows $v\!\longrightarrow \!u$ will again be given by the sequence~\eqref{eq:00112334...}, and we are~done. 
\end{proof}

\pagebreak[3]

\begin{remark}
Restricting the quiver shown in Figure~\ref{fig:extended-somos-4} to the vertex set $\{1,2,3,4\}$ yields the \emph{Somos-4} quiver, see, e.g., \cite[Section~7.1]{marsh-lectures}. 
The same restriction applied to the quiver shown in Figure~\ref{fig:double-4-cycle} produces the product $A_1^{(1)}\boxtimes A_1^{(1)}$ of two Kronecker quivers of affine type~$A_1$. Both $4$-vertex quivers are closely related to discrete integrable systems. We~do not know whether this relationship is purely coincidental, or is a sign of some connection between universality and integrability. 
\end{remark}

\begin{proof}[Proof of Theorem~\ref{thm:main-thm-quivers}]
The theorem directly follows from Lemmas \ref{lem:gluing} and~\ref{lem:6-vertex-quivers}. 
The quiver shown in Figure~\ref{fig:extended-somos-4} has $6$~vertices and $14$~arrows. Applying (the construction in) Lemma~\ref{lem:gluing} with $k=4$ and $\ell=14$, we obtain an $n$-universal quiver with $n+4\binom{n}{2}=2n^2-n$ vertices and $14\binom{n}{2}=7n^2-7n$ arrows. 
An example for $n=3$ is shown in Figure~\ref{fig:3-universal}. 
\end{proof}

\begin{figure}[ht]
\begin{center}
\begin{tikzpicture}[scale=1,line width=0.6pt]
%\node at (0,4) {$u$};
\draw (0,4) circle [radius=0.15];
%\node at (2,4) {$v$};
\draw (2,4) circle [radius=0.25];
\node at (0,0) {1};
\draw (0,0) circle [radius=0.25];
\node at (0,2) {2};
\draw (0,2) circle [radius=0.25];
\node at (2,2) {3};
\draw (2,2) circle [radius=0.25];
\node at (2,0) {4};
\draw (2,0) circle [radius=0.25];

\draw[->](0,2.4)--(0,3.6);
\draw[->](2,2.4)--(2,3.6);
\draw[->](0.3,3.7)--(1.7,2.3);
\draw[<-](0.3,2.3)--(1.7,3.7);

\draw[<-](0,0.4)--(0,1.6);
\draw[->](2,0.4)--(2,1.6);
\draw[<-](0.4,0)--(1.6,0);

\draw[->](0.3,0.2)--(1.8,1.7);
\draw[->](0.2,0.3)--(1.7,1.8);
\draw[->](0.3,1.8)--(1.8,0.3);
\draw[->](0.2,1.7)--(1.7,0.2);

\draw[<-](0.4,2)--(1.6,2);
\draw[<-](0.4,2.13)--(1.6,2.13);
\draw[<-](0.4,1.87)--(1.6,1.87);

\draw[red] (2,4) circle [radius=0.15];
\draw[red] (3.73,5) circle [radius=0.25];
\draw[red] (5.46,6) circle [radius=0.25];
\draw[red] (1,5.73) circle [radius=0.25];
\draw[red] (2.73,6.73) circle [radius=0.25];
\draw[red] (4.46,7.73) circle [radius=0.25];
\node[red] at (3.73,5) {2};
\node[red] at (5.46,6) {1};
\node[red] at (2.73,6.73) {3};
\node[red] at (4.46,7.73) {4};
\draw[red,<-](2.346,4.2)--(3.384,4.8);
\draw[red,->](4.076,5.2)--(5.114,5.8);
\draw[red,<-](1.346,5.92)--(2.384,6.53);
\draw[red,<-](3.076,6.92)--(4.114,7.53);
\draw[red,->](2.1,4.36)--(2.63,6.37);
\draw[red,->](1.36,5.63)--(3.37,5.1);
\draw[red,->](2.93,6.39)--(3.53,5.346);
\draw[red,->](2.815,6.32)--(3.415,5.276);
\draw[red,->](3.045,6.46)--(3.645,5.416);
%\draw[red,->](3.83,5.36)--(4.36,7.37);
\draw[red,->](3.76,5.38)--(4.29,7.39);
\draw[red,->](3.90,5.34)--(4.43,7.35);
%\draw[red,<-](3.09,6.63)--(5.1,6.1);
\draw[red,<-](3.07,6.56)--(5.08,6.03);
\draw[red,<-](3.11,6.7)--(5.12,6.17);
%\draw[red,->](2.93,6.39)--(3.53,5.346);
\draw[red,->](4.66,7.39)--(5.26,6.346);

\draw[cyan] (0,4) circle [radius=0.25];
\draw[cyan] (1,5.73) circle [radius=0.15];
\draw[cyan] (-3.46,6) circle [radius=0.25];
\draw[cyan] (-0.73,6.73) circle [radius=0.25];
\draw[cyan] (-2.46,7.73) circle [radius=0.25];
\draw[cyan] (-1.73,5) circle [radius=0.25];
\node[cyan] at (-1.73,5) {2};
\node[cyan] at (-3.46,6) {1};
\node[cyan] at (-0.73,6.73) {3};
\node[cyan] at (-2.46,7.73) {4};
\draw[cyan,<-](-0.346,4.2)--(-1.384,4.8);
\draw[cyan,->](-2.076,5.2)--(-3.114,5.8);
\draw[cyan,<-](0.654,5.92)--(-0.384,6.53);
\draw[cyan,<-](-1.076,6.92)--(-2.114,7.53);
\draw[cyan,->](-0.1,4.36)--(-0.63,6.37);
\draw[cyan,->](0.64,5.63)--(-1.37,5.1);
\draw[cyan,->](-0.93,6.39)--(-1.53,5.346);
\draw[cyan,->](-0.815,6.32)--(-1.415,5.276);
\draw[cyan,->](-1.045,6.46)--(-1.645,5.416);
\draw[cyan,->](-1.76,5.38)--(-2.29,7.39);
\draw[cyan,->](-1.90,5.34)--(-2.43,7.35);
\draw[cyan,<-](-1.07,6.56)--(-3.08,6.03);
\draw[cyan,<-](-1.11,6.7)--(-3.12,6.17);
\draw[cyan,->](-2.66,7.39)--(-3.26,6.346);

\end{tikzpicture}
\vspace{-.05in}
\end{center}
\caption{A $3$-universal quiver with 15 vertices and 42 arrows, obtained by gluing 3~copies of the extended Somos-4 quiver shown in Figure~\ref{fig:extended-somos-4}. Each copy can be affixed to the respective pair of  vertices in two different ways.}
\vspace{-.2in}
\label{fig:3-universal}
\end{figure}
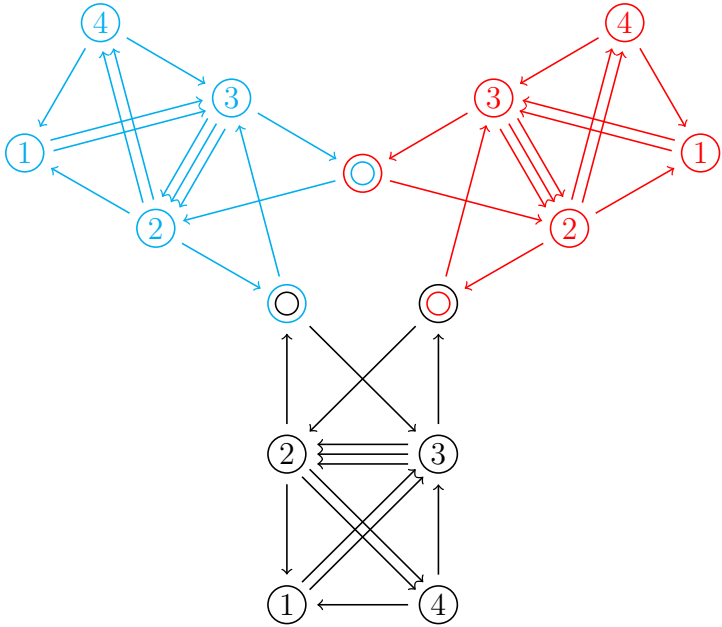

\begin{proposition}
\label{main_prop}
Let $Q$ be a quiver with $r$ arrows and $n$ vertices, none of which is a source or a sink. Then there exists a quiver $\widetilde{Q}$ such that
\begin{itemize}[leftmargin=.3in]
\item
$Q$ is a full subquiver of a quiver mutation equivalent to $\widetilde{Q}$; 
\item
$\widetilde{Q}$ has $2r-n$ vertices; 
\item
each vertex in $\widetilde{Q}$ is incident to at most 3 arrows.
\end{itemize}
\end{proposition}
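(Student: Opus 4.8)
The plan is to build $\widetilde Q$ by replacing each vertex of $Q$ with a directed path (a ``caterpillar''), arranged so that the total number of vertices is exactly $2r-n$ and every vertex is incident to at most three arrows, and then to recover $Q$ by a sequence of mutations that ``sweeps'' each caterpillar back onto a single representative vertex. For a vertex $v$ of $Q$, let $p\ge 1$ and $q\ge 1$ be the numbers of arrows of $Q$ entering and leaving $v$ (counted with multiplicity), so that $d_v:=p+q$ is the number of arrows incident to $v$; here I use the hypothesis that $v$ is neither a source nor a sink precisely to guarantee $p,q\ge 1$. I replace $v$ by a directed path $w_1\to w_2\to\cdots\to w_{d_v-1}$ on $d_v-1$ vertices and single out $v^\ast:=w_p$ as the \emph{representative} of $v$. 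The $p$ arrows of $Q$ pointing into $v$ are attached as incoming arrows of the \emph{upstream} vertices $w_1,\dots,w_{p-1}$, and the $q$ arrows leaving $v$ are attached as outgoing arrows of the \emph{downstream} vertices $w_{p+1},\dots,w_{d_v-1}$, in each half placing two external arrows at the extreme vertex and one at every other vertex (with the obvious degeneration attaching the arrow directly to $v^\ast$ when $p=1$ or $q=1$). The original arrows of $Q$ thus become the arrows of $\widetilde Q$ joining distinct caterpillars.

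Summing over all vertices gives $\sum_v(d_v-1)=2r-n$ vertices. The degree bound is immediate: an interior vertex of a half has two path-arrows and one external arrow, each extreme vertex has one path-arrow and two external arrows, and each representative $v^\ast=w_p$ has two path-arrows and no external arrow. The orientation of the path is what makes the two halves behave correctly: upstream of $v^\ast$ the path points toward the representative, matching the incoming external arrows, while downstream it points away, matching the outgoing external arrows. This is exactly where the no-source/no-sink hypothesis is essential, since a genuine source (all external arrows outgoing) could not be accommodated on the short downstream half alone.

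To recover $Q$, I mutate the downstream half $w_{d_v-1},w_{d_v-2},\dots,w_{p+1}$ of each caterpillar from its far end inward, and the upstream half $w_1,\dots,w_{p-1}$ symmetrically. Along the downstream half the path points away from $v^\ast$ and every external arrow points outward, so mutating a downstream vertex re-roots the external arrows it carries onto its path-neighbor one step closer to $v^\ast$, keeping them outgoing; since no two outgoing arrows at a vertex form a directed $2$-path through it, no arrow is ever created between two external targets, and the extraneous $2$-cycles produced against previously reversed path-arrows cancel. Inductively, after sweeping the whole downstream half, every arrow of $Q$ leaving $v$ emanates from $v^\ast$; the upstream half is treated identically by passing to the opposite quiver and using that mutation commutes with arrow reversal. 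Performing these sweeps and then restricting to the $n$ representatives via Lemma~\ref{lem:restriction-mutation} should produce $Q$.

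The main thing to verify carefully -- and the principal source of technical bookkeeping -- is that the sweeps belonging to different caterpillars do not interfere. The crucial point is that the only arrows joining two distinct caterpillars are the (successively re-routed) arrows of $Q$, each running from a downstream vertex of its source caterpillar to an upstream vertex of its target caterpillar; a mutation inside one caterpillar only re-roots such an arrow on its own side, because every vertex it mutates carries external arrows of a single orientation. Consequently no mutation can create an arrow between representatives of non-adjacent vertices of $Q$, nor pollute a neighboring caterpillar, so the sweeps may be carried out caterpillar by caterpillar in any order. Making this invariant precise -- tracking the orientations of all external arrows through each step and checking that every extraneous $2$-cycle cancels -- is the crux of the argument; once it is in place, the surviving arrows between representatives match the arrows of $Q$ with multiplicity, and the restriction to $\{v^\ast\}$ is exactly $Q$.
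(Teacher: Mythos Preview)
Your construction coincides with the paper's (up to re-indexing the path vertices): replace each vertex of in-degree $p$ and out-degree $q$ by a directed path on $p+q-1$ vertices, attach the external half-arrows exactly as you describe (two at each extreme end, one at every interior non-representative vertex, none at the central representative), and recover $Q$ by mutating once at every non-representative vertex. So the approach is essentially the same.

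The one place the paper is cleaner is the verification you flag as ``the crux.'' Your proposed invariant---that the only arrows joining distinct caterpillars are the successively re-routed arrows of $Q$, each running downstream-to-upstream---does not survive the sweep as stated: once you mutate a downstream vertex $w_j$, its external arrows are reversed, and the next mutation (at $w_{j-1}$) creates new arrows through the reversed path-edge $w_j\to w_{j-1}$ that land back on $w_j$'s old external targets, so ``polluting'' arrows in the wrong direction do appear between caterpillars. The paper sidesteps all of this bookkeeping with a single device: after mutating a vertex, immediately delete it. Since no vertex is mutated twice, Lemma~\ref{lem:restriction-mutation} guarantees that this mutate-and-delete process yields the same final restricted quiver as mutating everything first and restricting at the end; but in the delete-as-you-go picture the spurious arrows never exist, the sweeps at different fragments manifestly commute, and the verification becomes a one-line check. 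Adopting this trick would both complete and greatly shorten your argument.
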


\begin{proof}
Let us split each arrow $u\succ\hspace{-2pt}\rightarrow v$ in~$Q$ into two halves 
$u\!\lefthalfarrow\,$ and~$\,\mapsto \!v$. This produces $n$ fragments~$F_v$, one for each vertex~$v$ in~$Q$. We then replace each $F_v$ by a new fragment~$\widetilde F_v$, constructed as follows. Let $p=\operatorname{indeg}(v)$ and $q=\operatorname{outdeg}(v)$. Since $v$ is neither a source nor a sink, we have $p,q\ge 1$. 
The new fragment~$\widetilde F_v$ consists~of: 
\begin{itemize}[leftmargin=.3in]
\item
$p+q-1$ vertices, denoted 
$u_{p},\dots,u_2,u_1=w_1,w_2,\dots,w_{q}\,$;
\item
$p+q-2$ arrows $u_{p}\longrightarrow \cdots \longrightarrow u_2\longrightarrow u_1=w_1\longrightarrow w_2 \longrightarrow\cdots\longrightarrow w_{q}\,$;
\item 
$p$ incoming half-arrows: $\mapsto \!u_i$ for $2\le i\le p$, plus one extra half-arrow $\mapsto \!u_{p}\,$;
\item
$q$~outgoing half-arrows: $\,w_i\!\lefthalfarrow\,$ for $2\le i\le q$, plus one extra half-arrow $w_q\!\lefthalfarrow\,\,$. 
\end{itemize}
We then piece together the fragments $\widetilde F_v$, using the original quiver as a template: for each arrow $v\longrightarrow v'$ in~$Q$, we stitch one outgoing half-arrow from~$\widetilde F_v$ to one incoming half-arrow from~$\widetilde F_v'$. (The choices of particular half-arrows are immaterial.) 

The resulting quiver $\widetilde Q$ has 
$\sum_v (\operatorname{indeg}(v)+\operatorname{outdeg}(v)-1)=2r-n$ vertices. 
The total degree of each vertex in this quiver is either 2 or~3. 
Let us now mutate $\widetilde Q$ as follows: for each fragment $\widetilde F_v$ as above, mutate once at each of the vertices $u_2,\dots,u_{p},w_2,\dots,w_q$. (Mutations at different fragments commute.) 
After each mutation~$\mu_u$, remove the corresponding vertex~$u$, together with all arrows incident to~$u$.  The resulting quiver is canonically isomorphic to~$Q$. 
\end{proof}

\begin{corollary}
\label{main_thm2}
For any $n\ge 2$, there exists an $n$-universal quiver with fewer than $12n^2$ vertices, each of which is incident to at most 3 arrows. 
\end{corollary}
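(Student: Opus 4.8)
The plan is to combine the two preceding results: take the $n$-universal quiver $Q$ furnished by Theorem~\ref{thm:main-thm-quivers}, and then feed it into the degree-reduction construction of Proposition~\ref{main_prop}. The quiver $Q$ has $2n^2-n$ vertices and $r=7n^2-7n$ arrows, so once Proposition~\ref{main_prop} applies, the resulting quiver $\widetilde Q$ will have
\[
2r-(2n^2-n)=2(7n^2-7n)-(2n^2-n)=12n^2-13n<12n^2
\]
vertices, each incident to at most three arrows, which is exactly the bound claimed.

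To invoke Proposition~\ref{main_prop} I must first verify its hypothesis that $Q$ has neither a source nor a sink. Recall from the proof of Theorem~\ref{thm:main-thm-quivers} that $Q$ is obtained by gluing $\binom{n}{2}$ copies of the extended Somos-4 quiver (Figure~\ref{fig:extended-somos-4}), identifying the special vertices $u,v$ of each copy with a pair of vertices of $Q_\bullet$. A direct inspection of Figure~\ref{fig:extended-somos-4} shows that each of its six vertices has both in-degree and out-degree at least $1$; in particular $u$ and $v$ each have in-degree $1$ and out-degree~$1$. The four interior vertices of every copy are never identified with anything, so they retain these properties in~$Q$, while each vertex of $Q_\bullet$ lies in $n-1\ge 1$ copies and thereby inherits positive in- and out-degree. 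Hence no vertex of $Q$ is a source or a sink, and Proposition~\ref{main_prop} applies.

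It remains to check that the quiver $\widetilde Q$ produced by Proposition~\ref{main_prop} is itself $n$-universal, since the proposition only asserts that $Q$ is a full subquiver of a quiver mutation equivalent to $\widetilde Q$. By that proposition there exist $\widetilde Q^{*}\sim\widetilde Q$ and a vertex subset $I$ with $Q\cong\widetilde Q^{*}_I$. Applying the remark following Lemma~\ref{lem:restriction-mutation} to the base quiver $\widetilde Q^{*}$, every quiver in $[Q]=[\widetilde Q^{*}_I]$ is a full subquiver of a quiver in $[\widetilde Q^{*}]=[\widetilde Q]$. Now let $P$ be an arbitrary quiver on $n$ vertices. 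Since $Q$ is $n$-universal, $P$ is a full subquiver of some $Q^{*}\in[Q]$; and since $Q^{*}\in[Q]$, that $Q^{*}$ is in turn a full subquiver of some $\widetilde Q^{**}\in[\widetilde Q]$. Because a restriction of a restriction is again a restriction, $P$ is a full subquiver of $\widetilde Q^{**}$, hence of a quiver mutation equivalent to $\widetilde Q$. Thus $\widetilde Q$ is $n$-universal, completing the argument.

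The only step that is not purely formal is the source/sink verification, because Proposition~\ref{main_prop} cannot be invoked otherwise; but this reduces to the finite inspection of the six-vertex building block together with the trivial degree bookkeeping under gluing. The vertex count and the transfer of universality are then immediate consequences of Theorem~\ref{thm:main-thm-quivers}, Proposition~\ref{main_prop}, and Lemma~\ref{lem:restriction-mutation}.
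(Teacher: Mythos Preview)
Your proof is correct and follows exactly the approach of the paper, which simply says to apply Proposition~\ref{main_prop} to the $n$-universal quiver constructed in the proof of Theorem~\ref{thm:main-thm-quivers}. You have merely filled in the details the paper leaves implicit: the source/sink check for the extended Somos-4 building block, the vertex count $2(7n^2-7n)-(2n^2-n)=12n^2-13n<12n^2$, and the (routine) verification that $n$-universality passes from $Q$ to~$\widetilde Q$.
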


\begin{proof}
Apply Proposition~\ref{main_prop} to the $n$-universal quiver constructed in the proof of Theorem~\ref{thm:main-thm-quivers}.
\end{proof}

\begin{theorem}
\label{thm:planar-universal}
For any integer $n\ge 2$, there exists a planar $n$-universal quiver with $O(n^4)$ vertices and $O(n^4)$ arrows. 
\end{theorem}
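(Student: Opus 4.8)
The plan is to start from the (non-planar) $n$-universal quiver $\overline Q$ produced in the proof of Theorem~\ref{thm:main-thm-quivers}, which has $O(n^2)$ vertices and $O(n^2)$ arrows, and to ``planarize'' it at the cost of inflating the size to $O(n^4)$. The guiding reduction is a transitivity property for universality: it suffices to construct a \emph{planar} quiver $Q'$ with $O(n^4)$ vertices and arrows such that $\overline Q$ is a full subquiver of some quiver mutation equivalent to~$Q'$. Indeed, if $\overline Q = R_I$ for some $R\in[Q']$, then by Lemma~\ref{lem:restriction-mutation} (and the remark following it) every quiver in $[\overline Q]=[R_I]$ is a full subquiver of a quiver in $[R]=[Q']$. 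Since $\overline Q$ is $n$-universal, any $n$-vertex quiver $P$ embeds as a full subquiver of some $S\in[\overline Q]$, which in turn embeds into some $T\in[Q']$; as a full subquiver of a full subquiver is again a full subquiver, $P$ is a full subquiver of~$T$, so $Q'$ is $n$-universal.

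To build $Q'$, I would first draw $\overline Q$ in the plane in general position. A multigraph with $m=O(n^2)$ arrows admits a drawing in which every two arrows cross at most once, giving at most $\binom{m}{2}=O(n^4)$ crossings. Next I would subdivide each arrow of $\overline Q$ into a directed path with enough interior vertices that each crossing becomes isolated inside its own small disk, meeting exactly two wire-segments; since each arrow is crossed $O(n^2)$ times, $O(n^2)$ subdivisions per arrow suffice, for a total of $O(n^4)$ subdivision vertices. Subdivision is essentially free, exactly as in the proof of Proposition~\ref{main_prop}: mutating once at each interior vertex of such a path and then deleting it contracts the path back to a single arrow, so the subdivided quiver still has $\overline Q$ as a full subquiver of a mutation-equivalent quiver.

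The crux is a local \emph{uncrossing gadget} that replaces a single transversal crossing by a planar configuration which recovers the crossing after one mutation. Near a crossing, label the four wire-ports as $A$ (incoming, West) and $B$ (outgoing, East) on one wire, and $C$ (incoming, North) and $D$ (outgoing, South) on the other, and insert a single new central vertex~$x$. The gadget arrows are $A\to x$, $x\to B$, $C\to x$, $x\to D$, together with two ``side'' arrows $D\to A$ and $B\to C$; placing $A,C,B,D$ at W,N,E,S and $x$ at the center makes this configuration planar. Mutating at~$x$ creates the four path-arrows $A\to B$, $A\to D$, $C\to B$, $C\to D$; the spurious diagonals $A\to D$ and $C\to B$ are cancelled as oriented $2$-cycles against the pre-installed side arrows $D\to A$ and $B\to C$, leaving exactly $A\to B$ and $C\to D$ on the wire-ports. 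Deleting~$x$ thus reconnects the two wires with no interaction between them, i.e.\ undoes the crossing. Performing this at every crossing yields a planar quiver $Q'$ with $O(n^4)$ vertices and $O(n^4)$ arrows.

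It remains to verify that mutating $Q'$ at all gadget centers and then at all remaining wire-interior vertices, and restricting to the original vertex set of~$\overline Q$, returns~$\overline Q$. Each gadget center~$x$ is adjacent only to its four local ports, so the various $\mu_x$ commute and act independently, and the path-contracting mutations are local to disjoint segments; the point to confirm is that no spurious arrows appear between original vertices, which reduces to the two local computations above (one crossing, one subdivision) composed along each wire. This bookkeeping is the most delicate part, but the only genuinely new ingredient---and the main obstacle---is the design and verification of the uncrossing gadget, in particular the trick of pre-installing the reversed diagonals $D\to A$ and $B\to C$ so that the unwanted arrows produced by $\mu_x$ vanish through $2$-cycle cancellation; the rest is routing and counting.
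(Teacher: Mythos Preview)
Your approach is correct and coincides with the paper's in all essential respects. The paper also starts from the $n$-universal quiver $\overline Q$ of Theorem~\ref{thm:main-thm-quivers}, draws it in the plane (using a convex $n$-gon and the flattened extended Somos-4 quiver of Figure~\ref{fig:flattened-Somos-4}) so that there are $O(n^4)$ simple crossings, and then resolves each crossing by a local planar gadget (Lemma~\ref{lem:resolving-crossings}). That gadget introduces five new vertices $a,b,c,d,e$ per crossing: $a,c$ and $d,b$ are subdivision points on the two wires, $e$ is the shared central vertex, and two ``anti-diagonal'' arrows $b\to a$ and $c\to d$ are pre-installed to cancel the spurious arrows after mutating at~$e$. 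Your gadget is literally the same device with the four subdivision points $A,B,C,D$ supplied by a separate global subdivision step and only the center $x$ counted as new; your side arrows $D\to A$, $B\to C$ play exactly the role of the paper's $b\to a$, $c\to d$. The transitivity reduction you spell out, and the use of Lemma~\ref{lem:restriction-mutation} to decouple the gadget mutations from the path-contraction mutations, are also implicit in the paper's argument. The only differences are cosmetic: the paper bundles subdivision into the gadget (hence ``$+5$ vertices, $+8$ arrows per crossing'') and uses a specific drawing to obtain explicit leading constants, whereas you subdivide first and work with $O$-bounds throughout.
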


\begin{proof}
As a warm-up, let us discuss the case $n=3$. %the cases $n=3$ and $n=4$. 
The 3-universal~quiver shown in Figure~\ref{fig:3-universal} is planar. 
To see that, replace each edge of a regular triangle by a copy of the planar embedding of the extended Somos-4 quiver shown in Figure~\ref{fig:flattened-Somos-4}.
%
%Take the quiver constructed in the proof of Theorem~\ref{thm:main-thm-quivers} (an amalgamation of $n$~extended Somos-4 quivers) is planar for $n\le 4$. 
%
%Take a planar drawing of the complete graph~$K_4$ (say a regular triangle, with its bary\-center connected to its vertices), and replace each edge $(u,v)$ of $K_4$ by a copy~of the quiver from Figure~\ref{fig:flattened-Somos-4}. (We may need to further flatten each copy, to avoid crossings.) %The resulting quiver is $4$-universal, as shown in the proof of Theorem~\ref{thm:main-thm-quivers}. 
%A~similar construction yields a planar $4$-universal quiver with 28 vertices and 84 arrows. 

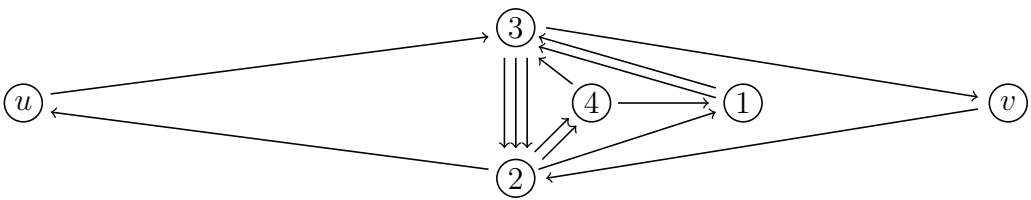
\begin{figure}[ht]
\begin{center}
%\vspace{-.1in}
\begin{tikzpicture}[scale=1,line width=0.6pt]
\node at (-3.5,0) {$u$};
\draw (-3.5,0) circle [radius=0.25];
\node at (3,1) {$3$};
\draw (3,1) circle [radius=0.25];
\node at (3,-1) {$2$};
\draw (3,-1) circle [radius=0.25];
\node at (4,0) {$4$};
\draw (4,0) circle [radius=0.25];
\node at (9.5,0) {$v$};
\draw (9.5,0) circle [radius=0.25];
\node at (6,0) {$1$};
\draw (6,0) circle [radius=0.25];
\draw[->](-3.14,0.12)--(2.64,0.88);
\draw[<-](-3.14,-0.12)--(2.64,-0.88);
\draw[->](3.4,1)--(9.1,0.08);
\draw[<-](3.4,-1)--(9.1,-0.08);
\draw[->](3.3,-0.88)--(5.64,-0.12);
\draw[<-](3.3,0.88)--(5.64,0.2);
\draw[<-](3.3,0.75)--(5.64,0.07);
\draw[->](4.35,0)--(5.55,0);
\draw[->](3,0.6)--(3,-0.6);
\draw[->](3.15,0.6)--(3.15,-0.6);
\draw[->](2.85,0.6)--(2.85,-0.6);
\draw[->](3.75,0.25)--(3.3,0.6);
\draw[<-](3.7,-0.2)--(3.25,-0.65);
\draw[<-](3.8,-0.3)--(3.35,-0.75);
\end{tikzpicture}
\vspace{-.1in}
\end{center}
\caption{A planar rendition of the extended Somos-4 quiver. }
\vspace{-.2in}
\label{fig:flattened-Somos-4}
\end{figure}

\pagebreak[3]

For general~$n$, we will need an additional trick. 

\begin{lemma}
\label{lem:resolving-crossings}
Let $Q$ be a quiver with $k$ vertices and $\ell$ arrows.
Suppose $Q$ can be drawn on the plane so that its arrows cross at $m$ points, two arrows at each crossing. Then there exists a planar quiver $Q'$ with $k+5m$ vertices and $\ell+8m$ arrows which can be mutated into a quiver that has $Q$ as a full subquiver. 
\end{lemma}

\begin{proof}
Let us replace each of the $m$ crossings by a (small) fragment of the kind shown in Figure~\ref{fig:replacing-a-crossing}. 
This produces a planar quiver~$Q'$ with the required parameters. Now perform the following operations for each of the new fragments: 
mutate at~$e$; remove~$e$; 
mutate at $a$ and~$b$; remove $a$ and~$b$;
mutate at $c$ and~$d$; remove $c$ and~$d$. 
This recovers the original quiver~$Q$. 
\end{proof}

\begin{figure}[ht]
\begin{center}
\vspace{-.1in}
\begin{tikzpicture}[scale=1,line width=0.6pt]
\draw[->](-2.6,0)--(2.6,0);
\draw[->](0,-2.6)--(0,2.6);
\node at (4,0) {$\leadsto$};
\end{tikzpicture}
\qquad  
\begin{tikzpicture}[scale=1,line width=0.6pt]
\node at (0,0) {$e$};
\draw (0,0) circle [radius=0.25];
\node at (1.5,0) {$c$};
\draw (1.5,0) circle [radius=0.25];
\node at (-1.5,0) {$a$};
\draw (-1.5,0) circle [radius=0.25];
\node at (0,1.5) {$b$};
\draw (0,1.5) circle [radius=0.25];
\node at (0,-1.5) {$d$};
\draw (0,-1.5) circle [radius=0.25];
\draw[<-](-1.25,0.25)--(-0.25,1.25);
\draw[<-](0.25,-1.25)--(1.25,-0.25);
\draw[->](-1.1,0)--(-0.4,0);
\draw[->](-2.6,0)--(-1.9,0);
\draw[->](0.4,0)--(1.1,0);
\draw[->](1.9,0)--(2.6,0);

\draw[->](0,-1.1)--(0,-0.4);
\draw[->](0,-2.6)--(0,-1.9);
\draw[->](0,0.4)--(0,1.1);
\draw[->](0,1.9)--(0,2.6);
\end{tikzpicture}
\vspace{-.15in}
\end{center}
\caption{Replacing a crossing by a planar fragment. }
\vspace{-.2in}
\label{fig:replacing-a-crossing}
\end{figure}
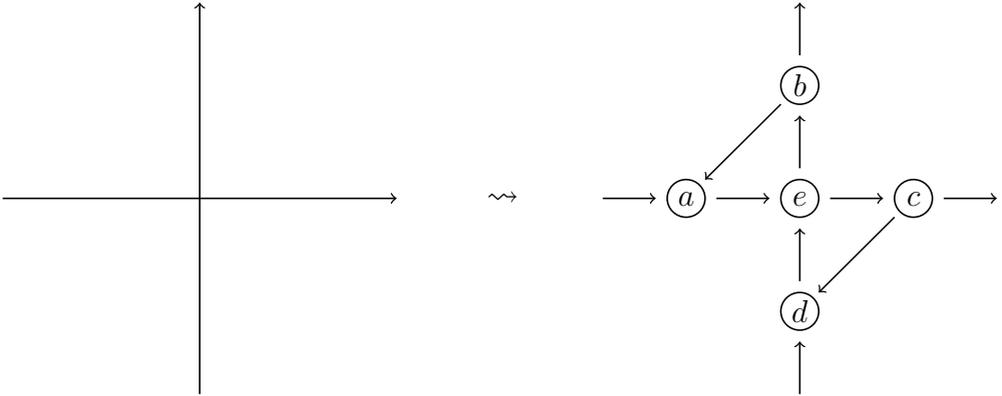

We are now ready to prove Theorem~\ref{thm:planar-universal}. 
Let $v_1,\dots,v_n$ be the vertices of a convex $n$-gon on the real plane, chosen so that no three diagonals of it are concurrent. 
As in the proof of Theorem~\ref{thm:main-thm-quivers}, we glue a copy $E_{ab}$ of the extended Somos-4 quiver between each pair of vertices $\{v_a, v_b\}$. 
In doing so, we flatten each subquiver $E_{ab}$ as shown in Figure~\ref{fig:very-flattened-Somos-4}, making sure that for any $a<b<c<d$, the drawings of $E_{ac}$ and $E_{bd}$ cross at exactly four points. The resulting quiver~$Q$ is isomorphic to the quiver constructed in the proof of Theorem~\ref{thm:main-thm-quivers}, so is $n$-universal, with $k=2n^2-n$ vertices and $\ell=7n^2-7n$ arrows. The drawings of the arrows of~$Q$ intersect at $m=4\binom{n}{4}$ points. 
Applying Lemma~\ref{lem:resolving-crossings}, we obtain a quiver with $2n^2-n+20\binom{n}{4}\sim\frac56 n^4$ vertices and $7n^2-7n+32\binom{n}{4}\sim \frac43 n^4$ arrows. This quiver is $n$-universal since it can be mutated into a quiver that restricts to~$Q$. 
\end{proof}

\begin{figure}[ht]
\begin{center}
%\vspace{-.1in}
\begin{tikzpicture}[scale=0.4,line width=0.6pt]
%\node at (-3,0) {$u$};
\draw (-2,0) circle [radius=0.2];
%\node at (3,1) {$3$};
\draw (3,1) circle [radius=0.2];
%\node at (3,-1) {$2$};
\draw (3,-1) circle [radius=0.2];
%\node at (4,0) {$4$};
\draw (4,0) circle [radius=0.2];
%\node at (9,0) {$v$};
%\draw (9,0) circle [radius=0.2];
%\node at (6,0) {$1$};
\draw (6,0) circle [radius=0.2];
\draw[->](-1.64,0.12)--(2.64,0.88);
\draw[<-](-1.64,-0.12)--(2.64,-0.88);
%\draw[->](3.4,1)--(8.6,0.08);
%\draw[<-](3.4,-1)--(8.6,-0.08);
\draw[->](3.3,-0.88)--(5.64,-0.12);
\draw[<-](3.3,0.88)--(5.64,0.2);
\draw[<-](3.3,0.75)--(5.64,0.07);
\draw[->](4.35,0)--(5.55,0);
\draw[->](3,0.6)--(3,-0.6);
\draw[->](3.15,0.6)--(3.15,-0.6);
\draw[->](2.85,0.6)--(2.85,-0.6);
\draw[->](3.75,0.25)--(3.3,0.6);
\draw[<-](3.7,-0.2)--(3.25,-0.65);
\draw[<-](3.8,-0.3)--(3.35,-0.75);
\draw[<-](3.35,-1.1)--(32.6,-0.15);
\draw[->](3.35,1.1)--(32.6,0.15);
\draw (33,0) circle [radius=0.2];
\end{tikzpicture}
\vspace{-.1in}
\end{center}
\caption{A deformation of the drawing in Figure~\ref{fig:flattened-Somos-4}.}
\vspace{-.2in}
\label{fig:very-flattened-Somos-4}
\end{figure}
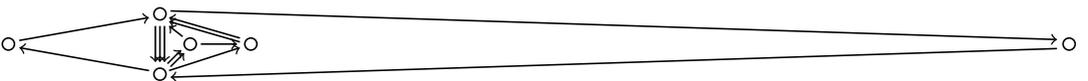

\begin{remark}
%\label{rem:}
The asymptotics obtained above can be improved to $\sim\frac5{16} n^4$ vertices and $\sim \frac12 n^4$ arrows by using the drawings of the complete graph~$K_n$ with the smallest known number of crossings; see, e.g., \cite[Section~13.2]{szekely}. 
\end{remark}

\section{Quivers of plabic graphs}
\label{sec:quivers-plabic}

Theorem~\ref{thm:planar-universal} has a corollary concerning quivers
associated with Postnikov's plabic graphs~\cite{postnikov}. 
In order to~state this corollary, we recall the requisite background,
adapting it to our current purposes and borrowing examples from
\cite[Section~6]{FPST}. 

\begin{definition}
\label{def:plabic}
Let $P$ be a finite connected planar graph~$P$ properly embedded
(as a $1$-dimensional cell complex) into a closed disk~$\Disk$. 
We call $P$ a (trivalent) \emph{plabic} (=planar bicolored) \emph{graph}
if the following conditions are satisfied: 
\begin{itemize}[leftmargin=.3in]
\item
each interior vertex of~$P$ is colored in one of the two colors, either black or white; 
%the~coloring does not have to be proper; 
\item
each vertex of $P$ lying in the interior of~$\Disk$ is trivalent 
(i.e., has degree~$3$);
\item 
each vertex of $P$ lying on the boundary~$\partial\Disk$ is univalent 
(i.e., has degree~$1$). 
%\item 
%each internal face of~$P$ (i.e., a face not adjacent to~$\partial\Disk$)
%is separated from at least one other internal face by an edge whose endpoints 
%have different colors. (This condition does not apply if $P$ has 
%a single internal face.)
\end{itemize}
Plabic graphs are viewed up to isotopy of the ambient disk~$\Disk$, 
and up to simultaneous reversal of the colors of all vertices. 
%
%Examples are shown in Figure~\ref{fig:plabic-graphs}. 
The \emph{move equivalence} is an equivalence relation on plabic graphs 
generated by the following \emph{local moves}: 
\begin{itemize}[leftmargin=.3in]
\item The \emph{flip} move replaces two adjacent trivalent vertices of
  the same color with two other vertices of the same color, connected
  in a different way:
  
\begin{center}
\setlength{\unitlength}{0.8pt}
\begin{picture}(40,45)(0,0)
\thicklines
\put(0,0){\line(2,1){20}}
\put(20,30){\line(2,1){20}}
\put(0,40){\line(2,-1){20}}
\put(20,10){\line(2,-1){20}}
\put(20,10){\line(0,1){20}}
\put(20,10){\circle*{5}}
\put(20,30){\circle*{5}}
\end{picture}
\begin{picture}(40,40)(0,0)
\put(20,20){\makebox(0,0){$\longleftrightarrow$}}
\end{picture}
\begin{picture}(40,40)(0,0)
\thicklines
\put(0,0){\line(1,2){10}}
\put(30,20){\line(1,2){10}}
\put(40,0){\line(-1,2){10}}
\put(10,20){\line(-1,2){10}}
\put(10,20){\line(1,0){20}}
\put(10,20){\circle*{5}}
\put(30,20){\circle*{5}}
\end{picture}
\begin{picture}(60,40)(0,0)
\end{picture}
\begin{picture}(40,40)(0,0)
\thicklines
\put(0,0){\line(2,1){18}}
\put(40,40){\line(-2,-1){18}}
\put(0,40){\line(2,-1){18}}
\put(40,0){\line(-2,1){18}}
\put(20,12.5){\line(0,1){15}}
\put(20,10){\circle{5}}
\put(20,30){\circle{5}}
\end{picture}
\begin{picture}(40,40)(0,0)
\thicklines
\put(20,20){\makebox(0,0){$\longleftrightarrow$}}
\end{picture}
\begin{picture}(40,40)(0,0)
\thicklines
\put(0,0){\line(1,2){9}}
\put(40,40){\line(-1,-2){9}}
\put(40,0){\line(-1,2){9}}
\put(0,40){\line(1,-2){9}}
\put(12.5,20){\line(1,0){15}}
\put(10,20){\circle{5}}
\put(30,20){\circle{5}}
\end{picture}
\end{center}
\item The \emph{square} move switches the colors on a 4-cycle of
  vertices of alternating colors:
\begin{center}
\setlength{\unitlength}{0.8pt}
\begin{picture}(40,45)(0,0)
\thicklines
\put(0,0){\line(1,1){8.5}}
\put(40,40){\line(-1,-1){8.5}}
\put(0,40){\line(1,-1){8.5}}
\put(40,0){\line(-1,1){8.5}}
\put(10,12.5){\line(0,1){15}}
\put(30,12.5){\line(0,1){15}}
\put(12.5,10){\line(1,0){15}}
\put(12.5,30){\line(1,0){15}}
\put(10,10){\circle*{5}}
\put(10,30){\circle{5}}
\put(30,10){\circle{5}}
\put(30,30){\circle*{5}}
\end{picture}
\begin{picture}(40,40)(0,0)
\thicklines
\put(20,20){\makebox(0,0){$\longleftrightarrow$}}
\end{picture}
\begin{picture}(40,40)(0,0)
\thicklines
\put(0,0){\line(1,1){8.5}}
\put(40,40){\line(-1,-1){8.5}}
\put(0,40){\line(1,-1){8.5}}
\put(40,0){\line(-1,1){8.5}}
\put(10,12.5){\line(0,1){15}}
\put(30,12.5){\line(0,1){15}}
\put(12.5,10){\line(1,0){15}}
\put(12.5,30){\line(1,0){15}}
\put(10,10){\circle{5}}
\put(10,30){\circle*{5}}
\put(30,10){\circle*{5}}
\put(30,30){\circle{5}}
\end{picture}
\end{center}
\end{itemize}
(This move requires a seldom relevant restriction on the faces surrounding the square face, 
cf.\ \cite[Restriction 6.3]{FPST}.) 
%Two plabic graphs related via a sequence of local moves are called \emph{move equivalent}. 
%To illustrate, in Figure~\ref{fig:plabic-graphs}, the first two graphs are related by a
%  square~move;~the second and the third by a flip move; the third and the fourth by tail removal.
\end{definition}

\begin{definition}
\label{def:Q(P)}
The quiver~$Q(P)$ associated with a plabic graph~$P$ is constructed as follows.
Place a vertex of~$Q(P)$ into each bounded face of~$P$.
For each edge~$e$ in~$P$ that connects vertices of different colors,
draw an arrow across~$e$ connecting the vertices of~$Q(P)$ located inside the faces 
on the two sides of~$e$. (We assume that these faces are bounded and distinct.) 
Orient this arrow so that the black endpoint of~$e$ appears on its right as one moves
in the chosen direction. 
Then remove oriented cycles of length~2, if any. 
%If this construction produces oriented cycles of length~2, then remove them, one by one. 
See Figure~\ref{fig:quivers-plabic-graphs}.
\end{definition}

%We note that the colors of boundary vertices do not affect the quiver. 

\begin{figure}[ht]
\begin{center}
%\vspace{0.1in}
\setlength{\unitlength}{0.8pt}
\begin{picture}(100,40)(0,-10)
\thicklines
%vertices
\put(10,0){\circle*{5}}
\put(30,0){\circle{5}}
\put(70,0){\circle*{5}}
\put(90,0){\circle{5}}
\put(10,20){\circle{5}}
\put(30,20){\circle*{5}}
\put(50,20){\circle*{5}}
\put(70,20){\circle{5}}
\put(10,40){\circle*{5}}
\put(50,40){\circle{5}}
\put(70,40){\circle*{5}}
\put(90,40){\circle{5}}

%edges
\put(0,0){\line(1,0){27.5}}
\put(32.5,0){\line(1,0){55}}
\put(92.5,0){\line(1,0){7.5}}
\put(12.5,20){\line(1,0){55}}
\put(0,40){\line(1,0){47.5}}
\put(52.5,40){\line(1,0){35}}
\put(92.5,40){\line(1,0){7.5}}

\put(10,2.5){\line(0,1){15}}
\put(10,22.5){\line(0,1){15}}
\put(30,17.5){\line(0,-1){15}}
\put(50,22.5){\line(0,1){15}}
\put(70,2.5){\line(0,1){15}}
\put(70,22.5){\line(0,1){15}}
\put(90,2.5){\line(0,1){35}}

%nodes
\put(20,10){\red{\circle*{5}}}
\put(61,10){\red{\circle*{5}}}

\put(20,30){\red{\circle*{5}}}
\put(61,30){\red{\circle*{5}}}
\put(83,20){\red{\circle*{5}}}

\put(24,30){\red{\vector(1,0){33}}}
\put(56,10){\red{\vector(-1,0){33}}}
\put(79,22){\red{\vector(-2.3,1){15}}}
\put(20,14){\red{\vector(0,1){13}}}
\put(61,26){\red{\vector(0,-1){13}}}
\put(64,12){\red{\vector(2.3,1){15}}}

\end{picture}
\qquad
\setlength{\unitlength}{0.8pt}
\begin{picture}(120,58)(-50,0)
\thinlines
%\put(30,30){\circle{72}}
\thicklines
\put(10,22.5){\line(0,1){15}}
\put(10,2.5){\line(0,1){15}}
\put(10,42.5){\line(0,1){15}}
\put(30,22.5){\line(0,1){15}}
\qbezier(32.5,20)(50,20)(50,30)
\qbezier(32.5,40)(50,40)(50,30)
\qbezier(12.5,0)(70,0)(70,30)
\qbezier(7.5,0)(-20,0)(-20,30)
\qbezier(7.5,60)(-20,60)(-20,30)
\qbezier(12.5,60)(70,60)(70,30)
\put(-20,30){\line(-1,0){20}}
\put(70,30){\line(1,0){20}}

\put(12.5,20){\line(1,0){15}}
\put(12.5,40){\line(1,0){15}}
\put(10,0){\circle{5}}
\put(10,60){\circle*{5}}
\put(10,20){\circle*{5}}
\put(10,40){\circle{5}}
\put(30,20){\circle{5}}
\put(30,40){\circle*{5}}
\put(-20,30){\circle*{5}}
\put(70,30){\circle*{5}}

%nodes
\put(-5,40){\red{\circle*{5}}}
\put(19,30){\red{\circle*{5}}}
\put(40,30){\red{\circle*{5}}}
\put(30,50){\red{\circle*{5}}}

\put(36,30){\red{\vector(-1,0){13}}}
\put(32,47){\red{\vector(1,-2){7}}}
\put(22,33){\red{\vector(1,2){7}}}
\put(20,34){\red{\vector(1,2){7}}}
\put(-1,38){\red{\vector(3,-1){18}}}
\put(27,49){\red{\vector(-7,-2){29}}}
\put(26,51){\red{\vector(-7,-2){29}}}

\end{picture}
\end{center}
\caption{Quivers associated with plabic graphs.
Double arrows arise when two faces of a
plabic graph share two disconnected boundary segments.} 
\label{fig:quivers-plabic-graphs}
\vspace{-.2in}
\end{figure}
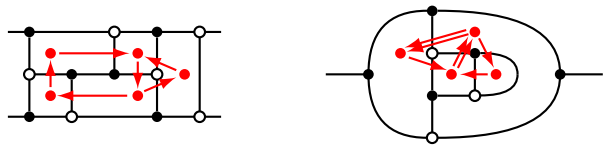

\begin{remark}
%\label{rem:}
There is a version of Definitions~\ref{def:plabic}--\ref{def:Q(P)} 
in which the interior vertices of a plabic graph are not required to be trivalent. 
The class of quivers associated with these more general plabic graphs 
is exactly the same as in the trivalent setting. 
\end{remark}

The following observation is implicit in Postnikov's original work~\cite{postnikov}.

\begin{proposition} %[{$\!\!$ \cite{postnikov}}]
\label{pr:plabic-vs-quivers}
If two plabic graphs are move equivalent to each other, 
then their associated quivers are mutation equivalent. 
%Cf.\ Figure~\ref{fig:plabic-moves-are-mutations}. 
\end{proposition}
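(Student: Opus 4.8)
The plan is to reduce the statement to a single application of each local move and then to verify directly how each move acts on the associated quiver. Since the move equivalence relation is by definition generated by the flip move and the square move, it suffices to show that whenever a plabic graph $P'$ is obtained from $P$ by one such move, the quivers $Q(P)$ and $Q(P')$ are mutation equivalent. Throughout, I would fix the local picture of the move and note that all faces, edges, and arrows lying outside a small neighborhood of the affected region are unchanged; hence only the arrows incident to the faces meeting that neighborhood can possibly be altered.

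For the flip move, I would observe that the edge joining the two same-colored vertices carries no arrow of $Q(P)$, since by Definition~\ref{def:Q(P)} arrows cross only those edges that connect vertices of \emph{different} colors. The four external edges attach the local fragment to the same four surrounding faces before and after the move. I would set up an explicit bijection between the faces of $P$ and those of $P'$, and check that each differently-colored edge, together with the orientation rule (the black endpoint of the edge appearing on the right), is preserved under this bijection. The conclusion is that the flip move leaves $Q(P)$ unchanged up to isomorphism, so the two quivers are trivially mutation equivalent.

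For the square move, I would let $f$ be the vertex of $Q(P)$ placed in the square face. The four sides of the square join vertices of alternating colors, so each side contributes exactly one arrow incident to~$f$, and reversing the colors flips the black/white labels at all four corners. I would then match the three steps of Definition~\ref{def:quiver-mutation}: (i) reversing the colors reverses the orientation rule on each of the four boundary edges, so all arrows incident to~$f$ are reversed; (ii) the change in local adjacency between the faces at opposite corners of the square creates precisely the new arrows $u\to w$ prescribed for each oriented path $u\to f\to w$; and (iii) any resulting oriented $2$-cycles are cancelled, as in the final step of mutation. Comparing the outcome with $\mu_f$ yields $Q(P')=\mu_f(Q(P))$, whence $Q(P)\sim Q(P')$.

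The main obstacle is the bookkeeping in the square move: one must carefully distinguish the faces meeting the square along an edge from those meeting it only at a corner, in order to confirm that the arrows created and the $2$-cycles removed agree exactly with the mutation rule. This is where the restriction on the faces surrounding the square (cf.\ \cite[Restriction~6.3]{FPST}) is used: it guarantees that the four faces adjacent to the square are bounded and pairwise distinct, so that the arrow count around~$f$ is correct and Definition~\ref{def:Q(P)} applies without degeneracy. Once these cases are disposed of, the verification is routine, provided one is careful about orientations.
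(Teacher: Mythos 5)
Your proposal is correct and follows exactly the route the paper has in mind: the paper itself offers no written proof (it attributes the statement to Postnikov and illustrates it in Figure~\ref{fig:plabic-moves-are-mutations}), and the caption of that figure records precisely your two cases --- the flip move yields an isomorphic quiver because the flipped edge joins same-colored vertices and the four legs still separate the same pairs of faces with unchanged endpoint colors, while the square move realizes the mutation $\mu_f$ at the vertex in the square face. Your verification remains a sketch at the corner-edge bookkeeping (note the composite arrows $u\to w$ pair up faces separated by the external leg at a \emph{shared} corner of the square, not faces at opposite corners), but the plan is sound and matches the standard argument.
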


Proposition~\ref{pr:plabic-vs-quivers} is illustrated in 
Figure~\ref{fig:plabic-moves-are-mutations}.

%\vspace{.1in}

\begin{figure}[ht]
\begin{center}
\setlength{\unitlength}{0.8pt}
\begin{picture}(100,40)(0,0)
\thicklines
%vertices
\put(10,0){\circle*{5}}
\put(30,0){\circle{5}}
\put(70,0){\circle*{5}}
\put(90,0){\circle{5}}
\put(10,20){\circle{5}}
\put(30,20){\circle*{5}}
\put(50,20){\circle*{5}}
\put(70,20){\circle{5}}
\put(10,40){\circle*{5}}
\put(50,40){\circle{5}}
\put(70,40){\circle*{5}}
\put(90,40){\circle{5}}

%edges
\put(0,0){\line(1,0){27.5}}
\put(32.5,0){\line(1,0){55}}
\put(92.5,0){\line(1,0){7.5}}
\put(12.5,20){\line(1,0){55}}
\put(0,40){\line(1,0){47.5}}
\put(52.5,40){\line(1,0){35}}
\put(92.5,40){\line(1,0){7.5}}

\put(10,2.5){\line(0,1){15}}
\put(10,22.5){\line(0,1){15}}
\put(30,17.5){\line(0,-1){15}}
\put(50,22.5){\line(0,1){15}}
\put(70,2.5){\line(0,1){15}}
\put(70,22.5){\line(0,1){15}}
\put(90,2.5){\line(0,1){35}}

%nodes
\put(20,10){\red{\circle*{5}}}
\put(61,10){\red{\circle*{5}}}

\put(20,30){\red{\circle*{5}}}
\put(61,30){\red{\circle*{5}}}
\put(83,20){\red{\circle*{5}}}

\put(24,30){\red{\vector(1,0){33}}}
\put(56,10){\red{\vector(-1,0){33}}}
\put(79,22){\red{\vector(-2.3,1){15}}}
\put(20,14){\red{\vector(0,1){13}}}
\put(61,26){\red{\vector(0,-1){13}}}
\put(64,12){\red{\vector(2.3,1){15}}}

\end{picture}
\qquad
\begin{picture}(100,40)(0,0)
\thicklines
%vertices
\put(10,0){\circle*{5}}
\put(30,0){\circle{5}}
\put(70,0){\circle*{5}}
\put(90,0){\circle{5}}
\put(10,20){\circle{5}}
\put(30,20){\circle*{5}}
\put(50,20){\circle{5}}
\put(70,20){\circle*{5}}
\put(10,40){\circle*{5}}
\put(50,40){\circle*{5}}
\put(70,40){\circle{5}}
\put(90,40){\circle{5}}

%edges
\put(0,0){\line(1,0){27.5}}
\put(32.5,0){\line(1,0){55}}
\put(92.5,0){\line(1,0){7.5}}
\put(12.5,20){\line(1,0){35}}
\put(52.5,20){\line(1,0){15}}
\put(0,40){\line(1,0){47.5}}
\put(52.5,40){\line(1,0){15}}
\put(72.5,40){\line(1,0){15}}
\put(92.5,40){\line(1,0){7.5}}

\put(10,2.5){\line(0,1){15}}
\put(10,22.5){\line(0,1){15}}
\put(30,17.5){\line(0,-1){15}}
\put(50,22.5){\line(0,1){15}}
\put(70,2.5){\line(0,1){15}}
\put(70,22.5){\line(0,1){15}}
\put(90,2.5){\line(0,1){35}}

%nodes
\put(20,10){\red{\circle*{5}}}
\put(61,10){\red{\circle*{5}}}

\put(20,30){\red{\circle*{5}}}
\put(61,30){\red{\circle*{5}}}
\put(83,20){\red{\circle*{5}}}

\put(56,30){\red{\vector(-1,0){33}}}
\put(56,10){\red{\vector(-1,0){33}}}
%\put(79,22){\red{\vector(-2.3,1){15}}}
\put(20,14){\red{\vector(0,1){13}}}
\put(61,14){\red{\vector(0,1){13}}}
\put(64,28){\red{\vector(2.3,-1){16}}}
\put(24,28){\red{\vector(2,-1){33}}}

\end{picture}
\qquad
\begin{picture}(100,40)(0,0)
\thicklines
%vertices
\put(10,0){\circle*{5}}
\put(30,0){\circle{5}}
\put(70,0){\circle*{5}}
\put(90,0){\circle{5}}
\put(10,20){\circle{5}}
\put(30,20){\circle*{5}}
\put(50,20){\circle{5}}
\put(50,0){\circle*{5}}
\put(10,40){\circle*{5}}
\put(50,40){\circle*{5}}
\put(70,40){\circle{5}}
\put(90,40){\circle{5}}

%edges
\put(0,0){\line(1,0){27.5}}
\put(32.5,0){\line(1,0){55}}
\put(92.5,0){\line(1,0){7.5}}
\put(12.5,20){\line(1,0){35}}
\
\put(0,40){\line(1,0){47.5}}
\put(52.5,40){\line(1,0){15}}
\put(72.5,40){\line(1,0){15}}
\put(92.5,40){\line(1,0){7.5}}

\put(10,2.5){\line(0,1){15}}
\put(10,22.5){\line(0,1){15}}
\put(30,17.5){\line(0,-1){15}}
\put(50,22.5){\line(0,1){15}}
\put(50,2.5){\line(0,1){15}}
\put(70,2.5){\line(0,1){35}}
\put(90,2.5){\line(0,1){35}}

%nodes
\put(20,10){\red{\circle*{5}}}
\put(40,10){\red{\circle*{5}}}

\put(30,30){\red{\circle*{5}}}
\put(60,20){\red{\circle*{5}}}
\put(83,20){\red{\circle*{5}}}

\put(56,22){\red{\vector(-3,1){22}}}
\put(36,10){\red{\vector(-1,0){13}}}
%\put(79,22){\red{\vector(-2.3,1){15}}}
\put(22,14){\red{\vector(1,2){6}}}
\put(44,12){\red{\vector(2,1){13}}}

\put(64,20){\red{\vector(1,0){15}}}
\put(32,26){\red{\vector(1,-2){6}}}

\end{picture}
\vspace{.05in}
\end{center}
\caption{The first two plabic graphs are related by a square move; their quivers are obtained from each other by a single mutation.
The second and third plabic graphs are related by a flip move and have isomorphic quivers.
}
\vspace{-.25in}
\label{fig:plabic-moves-are-mutations}
\end{figure}
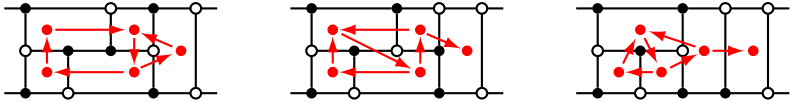

\begin{remark}
\label{rem:q-not=>p}
The converse to Proposition~\ref{pr:plabic-vs-quivers} is false, see \cite[Remark~6.10]{FPST}:
there exist plabic graphs which are not move equivalent even though 
their quivers are isomorphic (hence mutation equivalent). 
\end{remark}
%On the other hand, for the plabic graphs arising ``in nature,'' 
%move equivalence and mutation equivalence appear to be cryptomorphic. 

It is natural to wonder whether the class of quivers (and by extension, cluster algebras)
arising from plabic graphs is substantially narrower than the class of all quivers. 
The following  result shows that in some sense, no generality is lost 
by restricting the theory of quiver mutations to plabic graphs:

\begin{theorem}
\label{th:plabic-universal}
For any integer $n$, there exists a plabic graph with $O(n^4)$ edges and $O(n^4)$ faces 
whose associated quiver is $n$-universal. 
In particular, any quiver is a full subquiver of a quiver
mutation equivalent to a quiver of a plabic graph. 
\end{theorem}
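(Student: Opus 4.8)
The plan is to combine Theorem~\ref{thm:planar-universal} with a \emph{realizability} statement: that any planar quiver occurs as the quiver $Q(P)$ of a suitable plabic graph~$P$. The cleanest target is an isomorphism $Q(P)\cong Q$, where $Q$ is the planar $n$-universal quiver of Theorem~\ref{thm:planar-universal}; then $Q(P)$ is literally $n$-universal because it is isomorphic to an $n$-universal quiver. I~would also keep in reserve the weaker sufficient condition that $Q$ merely be a full subquiver of some quiver mutation equivalent to $Q(P)$: by the remark following Lemma~\ref{lem:restriction-mutation}, a full subquiver of a quiver in $[Q(P)]$ can always be exhibited inside a quiver mutation equivalent to $Q(P)$, so this weaker realization already forces $Q(P)$ to be $n$-universal.

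The heart of the argument is therefore the following lemma, which I would isolate and prove first: \emph{every planar quiver~$Q$ is isomorphic to $Q(P)$ for some plabic graph~$P$, with the number of edges and faces of~$P$ bounded by a constant times the number of vertices and arrows of~$Q$.} To build~$P$, I would start from a planar embedding of~$Q$ in the disk and arrange the bounded faces of~$P$ to be in bijection with the vertices of~$Q$, the face~$F_v$ surrounding the vertex~$v$. Each arrow $u\to v$ would be realized by a single bicolored edge of~$P$ separating $F_u$ from~$F_v$. Around each vertex~$v$ I would install a small trivalent bicolored gadget whose bounding edges carry, in the correct cyclic order, the arrows incident to~$v$, with the colors chosen so that the black-on-the-right convention of Definition~\ref{def:Q(P)} reproduces the direction of each incident arrow. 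A single outgoing (resp.\ incoming) arrow is hosted by one bicolored edge on $\partial F_v$; parallel arrows between two vertices are realized by letting the two faces share several disjoint bicolored boundary segments, exactly the mechanism producing double arrows in Figure~\ref{fig:quivers-plabic-graphs}. High-degree local configurations are resolved by expanding a vertex into a short monochromatic path of trivalent vertices, which creates no new bounded faces (a path has no cycles) and no new arrows (its internal edges are monochromatic).

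The main obstacle is \emph{global color consistency}: the gadget at~$v$ and the gadget at~$u$ must agree on the color of the shared edge realizing $u\to v$, and all such local constraints must be simultaneously satisfiable by a single two-coloring of the interior vertices of~$P$. The three-cycle $1\to2\to3\to1$ already shows that one cannot use a naive planar-dual picture (three arrows would collapse into three parallel edges between one inner and one outer face); the correct remedy is precisely the ``vertex-as-face'' gadget above, where three faces meet around a single trivalent vertex. Verifying that these local gadgets splice into a globally valid plabic graph --- each interior vertex trivalent, each boundary vertex univalent, no oriented $2$-cycles surviving, and $Q(P)$ exactly~$Q$ --- is a finite but fiddly case analysis at each vertex and arrow of~$Q$. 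Because every check is local and composable across the drawing, I expect it to go through, but this is where the real work lies.

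Finally I would assemble the pieces. Applying the realizability lemma to the planar $n$-universal quiver of Theorem~\ref{thm:planar-universal}, which has $O(n^4)$ vertices and $O(n^4)$ arrows, yields a plabic graph~$P$ with $O(n^4)$ edges and $O(n^4)$ faces whose quiver $Q(P)$ is $n$-universal. The concluding ``in particular'' assertion is then immediate: given any quiver~$R$ on $m$ vertices, take $n=m$; by $n$-universality of $Q(P)$, the quiver~$R$ is a full subquiver of a quiver mutation equivalent to $Q(P)$, that is, to the quiver of a plabic graph.
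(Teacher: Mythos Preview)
Your overall architecture is right---start from the planar $n$-universal quiver of Theorem~\ref{thm:planar-universal} and realize it via a plabic graph---but the realizability step has a genuine gap. You correctly flag ``global color consistency'' as the obstacle and then leave it unresolved (``I expect it to go through, but this is where the real work lies''). That is precisely the crux, and your gadget sketch does not settle it: once you commit to one bounded face~$F_v$ per vertex~$v$ and one bicolored edge per arrow, the interior vertices of~$P$ sit in the \emph{faces} of the planar drawing of~$Q$, and the black/white color of such a vertex is forced simultaneously by every arrow on the boundary of that face. If some bounded face of~$Q$ is not an oriented cycle, these constraints are contradictory, and no amount of local ``uncontracting into monochromatic trees'' repairs this without creating new bounded faces (hence new quiver vertices) or new bicolored edges (hence new arrows). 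So the strong claim ``every planar quiver is isomorphic to some~$Q(P)$'' is not established by your argument.

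The paper does not try to prove that strong claim. Instead it proceeds in two steps. First (Lemma~\ref{lem:oriented-faces}), it embeds an arbitrary planar quiver~$Q$ as a full subquiver of a larger planar quiver~$Q'$ satisfying: connected, no univalent vertices, each bounded face simply connected, and---crucially---each bounded face boundary an \emph{oriented} cycle. This last condition is exactly what dissolves your color-consistency problem: one then (Lemma~\ref{lem:quivers-from-plabic}) places a single vertex of~$P$ in each bounded face of~$Q'$, colored black or white according to the orientation of that face, and the colors are automatically globally consistent. The resulting plabic graph has $Q(P)\cong Q'$, with~$Q$ sitting inside as a full subquiver; since $n$-universality passes up to any quiver containing an $n$-universal full subquiver, $Q(P)$ is $n$-universal. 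Your ``fallback'' is thus not a fallback but the actual route, and the missing idea is the preliminary enlargement of~$Q$ to force oriented faces.
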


Theorem~\ref{th:plabic-universal}
can be regarded as a strengthening of Theorem~\ref{thm:planar-universal} 
since the quiver of any plabic graph is planar.

The proof of Theorem~\ref{th:plabic-universal} will rely on a couple of lemmas.

\begin{lemma}
\label{lem:quivers-from-plabic}
Let $Q$ be a planar quiver properly embedded in the plane. 
Assume that 
\begin{itemize}[leftmargin=.3in]
\item[\rm{(a)}]
$Q$ is connected; 
\item[\rm{(b)}]
$Q$ has no univalent vertices; 
\item[\rm{(c)}]
%if an arrow of $Q$ separates two bounded faces, then those faces are distinct; 
the closure of each bounded face of~$Q$ is simply connected; and 
\item[\rm{(d)}]
the boundary of each bounded face of~$Q$ is an oriented cycle.
\end{itemize}
Then %$Q$ comes from a plabic graph, i.e., 
there exists a plabic graph $P$ such that $Q(P)$ is isomorphic to~$Q$. 
\end{lemma}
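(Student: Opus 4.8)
The plan is to build $P$ essentially as a two-colored planar dual of~$Q$, reversing the recipe of Definition~\ref{def:Q(P)}. Since in $Q(P)$ the vertices are the bounded faces of~$P$ and the arrows are the bicolored edges of~$P$, I want the bounded faces of~$P$ to be the vertices of~$Q$ and its bicolored edges to be the arrows of~$Q$. Concretely, I would place one interior vertex $V_f$ of~$P$ inside each bounded face~$f$ of~$Q$, and for every arrow $\alpha$ separating two bounded faces $f,f'$ I would draw an edge of~$P$ across~$\alpha$ joining $V_f$ and~$V_{f'}$. The color of $V_f$ is dictated by condition~(d): the boundary cycle of~$f$ is drawn either clockwise or counterclockwise, and I would color $V_f$ black in the first case, white in the second. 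The key local facts are that two bounded faces sharing an arrow~$\alpha$ necessarily have opposite handedness (traversing each oriented boundary cycle along~$\alpha$ keeps one face on the left and the other on the right), so every interior dual edge is automatically bicolored; and that a black trivalent vertex surrounded by three faces induces a clockwise $3$-cycle on them while a white one induces a counterclockwise cycle, so the rule ``black endpoint on the right'' reproduces exactly the direction of~$\alpha$.

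The delicate part is the unbounded face~$f_\infty$, which is not governed by~(d) and must contribute \emph{no} vertex to~$Q(P)$, yet whose bounding arrows still have to be realized and whose incident vertices of~$Q$ must still become bounded faces. I would \emph{not} collapse $f_\infty$ to a single dual vertex; instead I would blow it up into a \emph{frame}: a single cycle~$\Gamma$ of new vertices following once around the (connected, by~(a)) boundary walk of~$f_\infty$, with one frame vertex placed at each passage of that walk across an arrow, and with $\Disk$ chosen so that $\partial\Disk$ lies entirely outside~$\Gamma$. Each arrow bordering~$f_\infty$ then has its dual edge joining the relevant interior vertex~$V_f$ to a frame vertex, while a bridge of~$Q$ (whose two sides both lie in~$f_\infty$) produces instead a chord joining two frame vertices. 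I would color each frame vertex to make its single dual edge bicolored, which is unobstructed since each frame vertex carries exactly one such edge. The frame edges of~$\Gamma$ border the single boundary face (the annulus out to~$\partial\Disk$) on their outer side, so by Definition~\ref{def:Q(P)} they create no arrows whatever their colors; their role is solely to close off the faces attached to the boundary vertices of~$Q$, turning each of them into a genuine bounded face of~$P$.

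At this stage $P$ realizes~$Q$ combinatorially, but interior vertices may have degree exceeding~$3$: a vertex $V_f$ has degree equal to the length of the cycle bounding~$f$, which is at least~$3$ (no loops or $2$-cycles), while frame vertices already have degree~$3$. I would then trivalentize by repeatedly splitting each vertex of degree $d>3$ into a tree of $d-2$ trivalent vertices \emph{of the same color}, joined by monochromatic edges; since monochromatic edges produce no arrows and a tree bounds no new face, this changes neither the bounded faces nor the arrows of the associated quiver. Conditions~(a)--(c) are exactly what make the construction legitimate: connectedness of~$Q$ makes~$P$ connected and gives a single boundary walk for~$\Gamma$; the absence of univalent vertices guarantees that every vertex of~$Q$ has degree at least~$2$, so its dual face is bounded by at least two edges and is not lost as a pendant; and simple connectivity of each bounded face ensures that $V_f$ is a single well-defined dual vertex whose incident edges form one cycle. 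Tracing Definition~\ref{def:Q(P)} through the resulting plabic graph~$P$ then yields $Q(P)\cong Q$.

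The step I expect to be the main obstacle is the frame. One must check that the boundary walk of~$f_\infty$ is genuinely followed by one embedded cycle even when~$Q$ has cut vertices or bridges; that the frame vertices can be colored so that every dual edge (including the chords coming from bridges) is bicolored while no frame edge manufactures a spurious arrow; and, most importantly, that no \emph{unintended} bounded face---hence no extra quiver vertex---is created between~$\Gamma$, the radial edges, and the chords. The interior dual construction, the handedness coloring, and the trivalentization are all routine once the orientation convention is pinned down; it is the passage from the plane to the disk and the bookkeeping along the outer boundary that will require the real care.
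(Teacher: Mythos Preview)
Your construction is essentially identical to the paper's: the paper also places a black/white vertex in each bounded face according to the handedness of its oriented boundary, places one auxiliary vertex in the unbounded face~$U$ next to each boundary arrow (two vertices, one per side, for a bridge), colors these by which side of the arrow~$U$ lies on, connects them cyclically around~$U$, and then uncontracts high-degree vertices into monochromatic trivalent trees. Your ``frame''~$\Gamma$ following the boundary walk of~$f_\infty$ is exactly this outer cycle, and your worries about bridges and spurious faces are handled (and in the paper, simply asserted to be ``straightforward to verify'') in the same way.
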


\begin{proof}
Let $U$ denote the unique unbounded face of~$Q$. 
We construct the plabic~graph $P$ as follows. 
(Consult Figure~\ref{fig:quiver-to-plabic}.) 
We begin by placing one black (resp., white) vertex of~$P$ inside each bounded face of~$Q$ 
whose boundary is oriented clockwise (resp., counter-clockwise). 
For each arrow~$a$ of~$Q$ that lies on the boundary of the unbounded face~$U$,
let us place a vertex of~$P$ inside~$U$ next to~$a$. 
(There will be many vertices of~$P$ inside~$U$, one for each boundary segment of~$U$.) 
Color that vertex black (resp., white) if $U$ lies on the right (resp., left) of~$a$ as we move along~$a$. 
If $a$ has $U$ on both sides of it, place a black vertex on its right and a white vertex on its~left. 

We now describe the edges of~$P$. 
For each arrow~$a$ of~$Q$ separating two bounded (necessarily distinct) faces of~$Q$, 
connect the vertices of~$P$ placed in those faces with an edge of~$P$ going across~$a$.
If one or both faces bordering an arrow~$a$ are unbounded, then use instead 
the vertices of~$P$ located near~$a$. 
We then walk around the boundary of the unbounded face~$U$ 
and cyclically connect all the vertices of~$P$ located~there. 
Finally, we make our plabic graph trivalent by ``uncontracting'' each vertex of degree $\ge 4$ into a trivalent tree, 
see the paragraph preceding Corollary~12.4 in~\cite{postnikov}; cf.\ also Figure~\ref{fig:quiver-to-plabic}.
It is straightforward to verify that the quiver associated with the resulting plabic graph~$P$
is the original quiver~$Q$. 
\end{proof}

\begin{figure}[ht]
\begin{center}
\setlength{\unitlength}{2pt} 
\begin{picture}(70,50)(5,5)
\thicklines 

\multiput(5,40)(40,0){2}{\circle{2}}
\multiput(20,55)(40,0){2}{\circle{2}}
\multiput(20,40)(40,0){2}{\circle*{2}}
\multiput(35,40)(40,0){2}{\circle{2}}
\put(40,35){\circle{2}}
\put(20,25){\circle{2}}
\put(40,25){\circle*{2}}
\put(45,20){\circle*{2}}
\put(60,20){\circle{2}}
\put(75,20){\circle*{2}}
\put(60,5){\circle*{2}}

\multiput(10,30)(20,0){2}{\red{\circle*{2}}}
\multiput(50,30)(20,0){2}{\red{\circle*{2}}}
\multiput(10,50)(20,0){2}{\red{\circle*{2}}}
\multiput(50,50)(20,0){2}{\red{\circle*{2}}}
\multiput(50,10)(20,0){2}{\red{\circle*{2}}}

\multiput(6,40)(40,0){2}{\line(1,0){13}}
\multiput(21,40)(40,0){2}{\line(1,0){13}}
\multiput(20,54)(0,-15){2}{\line(0,-1){13}}
\put(60,39){\line(0,-1){18}}
\multiput(60,54)(0,-35){2}{\line(0,-1){13}}
\put(75,39){\line(0,-1){18}}
\put(46,20){\line(1,0){13}}
\put(61,20){\line(1,0){13}}
\put(40,26){\line(0,1){8}}
\put(21,25){\line(1,0){18}}
\qbezier(5,50)(5,55)(10,55)
\put(5,41){\line(0,1){9}}
\put(10,55){\line(1,0){9}}
\qbezier(5,30)(5,25)(10,25)
\put(5,39){\line(0,-1){9}}
\put(10,25){\line(1,0){9}}
\qbezier(35,50)(35,55)(30,55)
\put(35,41){\line(0,1){9}}
\put(30,55){\line(-1,0){9}}
\qbezier(75,50)(75,55)(70,55)
\put(75,41){\line(0,1){9}}
\put(70,55){\line(-1,0){9}}
\qbezier(45,50)(45,55)(50,55)
\put(45,41){\line(0,1){9}}
\put(50,55){\line(1,0){9}}
\qbezier(45,10)(45,5)(50,5)
\put(45,19){\line(0,-1){9}}
\put(50,5){\line(1,0){9}}
\qbezier(75,10)(75,5)(70,5)
\put(75,19){\line(0,-1){9}}
\put(70,5){\line(-1,0){9}}
\qbezier(35,39)(35,35)(39,35)
\qbezier(45,39)(45,35)(41,35)
\qbezier(45,21)(45,25)(41,25)

\red{
\multiput(12,50)(40,0){2}{\vector(1,0){16}}
\put(32,30){\vector(1,0){16}}
\put(52,10){\vector(1,0){16}}
\put(28,30){\vector(-1,0){16}}
\put(68,30){\vector(-1,0){16}}
\put(50,28){\vector(0,-1){16}}
\put(70,12){\vector(0,1){16}}
\multiput(10,32)(40,0){2}{\vector(0,1){16}}
\multiput(30,48)(40,0){2}{\vector(0,-1){16}}
}
\end{picture}
\qquad\qquad
\begin{picture}(70,50)(5,5)
\thicklines 

\multiput(5,40)(40,0){2}{\circle{2}}
\multiput(20,55)(40,0){2}{\circle{2}}
  \multiput(18,42)(40,0){2}{\circle*{2}}
  \multiput(22,38)(40,0){2}{\circle*{2}}
\multiput(35,40)(40,0){2}{\circle{2}}
\put(40,35){\circle{2}}
\put(20,25){\circle{2}}
\put(40,25){\circle*{2}}
\put(45,20){\circle*{2}}
\put(58,22){\circle{2}}
\put(62,18){\circle{2}}
\put(75,20){\circle*{2}}
\put(60,5){\circle*{2}}

\multiput(10,30)(20,0){2}{\red{\circle*{2}}}
\multiput(50,30)(20,0){2}{\red{\circle*{2}}}
\multiput(10,50)(20,0){2}{\red{\circle*{2}}}
\multiput(50,50)(20,0){2}{\red{\circle*{2}}}
\multiput(50,10)(20,0){2}{\red{\circle*{2}}}

\qbezier(6,40)(16,40)(18,42)
\qbezier(20,54)(20,44)(18,42)
\qbezier(34,40)(24,40)(22,38)
\qbezier(20,26)(20,36)(22,38)
\put(18,42){\line(1,-1){4}}

\qbezier(46,40)(56,40)(58,42)
\qbezier(60,54)(60,44)(58,42)
\qbezier(74,40)(64,40)(62,38)
\qbezier(60,30)(60,36)(62,38)
\put(58,42){\line(1,-1){4}}

\qbezier(46,20)(56,20)(57.25,21.25)
\qbezier(60,30)(60,24)(58.75,22.75)
\qbezier(74,20)(64,20)(62.75,18.75)
\qbezier(60,6)(60,16)(61.25,17.25)
\put(58.75,21.25){\line(1,-1){2.5}}

%\multiput(21,40)(40,0){2}{\line(1,0){13}}
%\multiput(20,54)(0,-15){2}{\line(0,-1){13}}
%\put(60,39){\line(0,-1){18}}
%\multiput(60,54)(0,-35){2}{\line(0,-1){13}}
\put(75,39){\line(0,-1){18}}
%\put(46,20){\line(1,0){13}}
%\put(61,20){\line(1,0){13}}
\put(40,26){\line(0,1){8}}
\put(21,25){\line(1,0){18}}
\qbezier(5,50)(5,55)(10,55)
\put(5,41){\line(0,1){9}}
\put(10,55){\line(1,0){9}}
\qbezier(5,30)(5,25)(10,25)
\put(5,39){\line(0,-1){9}}
\put(10,25){\line(1,0){9}}
\qbezier(35,50)(35,55)(30,55)
\put(35,41){\line(0,1){9}}
\put(30,55){\line(-1,0){9}}
\qbezier(75,50)(75,55)(70,55)
\put(75,41){\line(0,1){9}}
\put(70,55){\line(-1,0){9}}
\qbezier(45,50)(45,55)(50,55)
\put(45,41){\line(0,1){9}}
\put(50,55){\line(1,0){9}}
\qbezier(45,10)(45,5)(50,5)
\put(45,19){\line(0,-1){9}}
\put(50,5){\line(1,0){9}}
\qbezier(75,10)(75,5)(70,5)
\put(75,19){\line(0,-1){9}}
\put(70,5){\line(-1,0){9}}
\qbezier(35,39)(35,35)(39,35)
\qbezier(45,39)(45,35)(41,35)
\qbezier(45,21)(45,25)(41,25)

\red{
\multiput(12,50)(40,0){2}{\vector(1,0){16}}
\put(32,30){\vector(1,0){16}}
\put(52,10){\vector(1,0){16}}
\put(28,30){\vector(-1,0){16}}
\put(68,30){\vector(-1,0){16}}
\put(50,28){\vector(0,-1){16}}
\put(70,12){\vector(0,1){16}}
\multiput(10,32)(40,0){2}{\vector(0,1){16}}
\multiput(30,48)(40,0){2}{\vector(0,-1){16}}
}
\end{picture}
\end{center}
\caption{Constructing a plabic graph from a planar quiver 
satisfying the conditions in Lemma~\ref{lem:quivers-from-plabic}.
The graph on the right is obtained from the one on the left by ``uncontracting'' three vertices of degree~4.}
\vspace{-.25in}
\label{fig:quiver-to-plabic}
\end{figure}
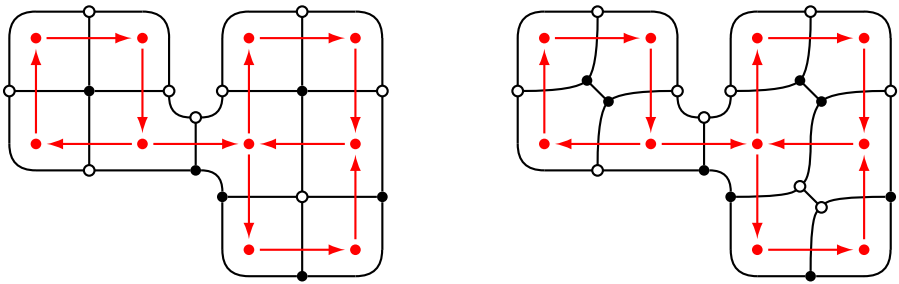

\begin{lemma}
\label{lem:oriented-faces}
Any planar quiver can be embedded (as a full subquiver) into a planar quiver satisfying 
conditions {\rm (a)--(d)} in Lemma~\ref{lem:quivers-from-plabic}. 
\end{lemma}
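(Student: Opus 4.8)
The plan is to reduce condition~(d) to a purely local statement and then repair the quiver one bounded face at a time, always adding new arrows that pass through fresh vertices, so that $Q$ remains a full subquiver. The key reformulation is the following. Call the corner of a bounded face $F$ at a boundary vertex~$v$ \emph{coherent} if, of the two arrows of $\partial F$ incident to~$v$, exactly one points toward~$v$; then $\partial F$ is an oriented cycle precisely when every corner of $F$ is coherent. The non-coherent corners are the \emph{source corners} (both boundary arrows leaving~$v$) and \emph{sink corners} (both entering~$v$). Tracking the sign of each boundary arrow relative to a fixed traversal of $\partial F$, one sees that this sign changes exactly at the source and sink corners; hence these alternate along $\partial F$, and $F$ has equally many of each. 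This alternation is the only structural fact about faces that the argument needs.

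Before repairing faces, I would arrange conditions (a)--(c) by cheap preliminary surgery, routing everything through new vertices. To get (a), I join the connected components of~$Q$ by directed paths $x\to z\to y$ with $z$ new, placed in the unbounded region; to eliminate univalent vertices and cut vertices, I attach at each offending vertex a short directed cycle through new vertices, which simultaneously makes the graph $2$-connected. After this step $Q$ is still a full subquiver, and the graph is connected and $2$-connected, so every bounded face is bounded by a \emph{simple} cycle whose closure is a disk, giving~(c); no direct arrow between two vertices of~$Q$ has been created.

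The heart of the argument is condition~(d), handled face by face. Fix a bounded face~$F$. Since its source and sink corners alternate and are equinumerous, I pair them by a non-crossing matching in which each matched sink--source pair is cut off by a monotone (fully directed) boundary arc of~$\partial F$. For each such pair $(t,s)$, with $t$ a sink and $s$ a source, I insert into the interior of~$F$ a directed path $t\to w\to\cdots\to s$ through at least one fresh degree-$2$ vertex, drawing these chords disjointly inside~$F$. Each chord, together with its intervening directed boundary arc, bounds a coherently oriented simple cycle, and after all chords are inserted $F$ is subdivided entirely into such cycles. Crucially, every arrow added at an original vertex~$v$ lies in the angular sector of the single face~$F$ being repaired, so repairs at different faces never interfere at a shared vertex; each new chord vertex is a degree-$2$ pass-through (hence coherent), and a short parity count (the number of bad corners at~$v$ has the same parity as $\deg v$) shows that every interior vertex ends up with even degree and with its incident arrows alternating in/out, which is exactly condition~(d). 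Because each chord carries an interior new vertex, no new arrow runs directly between two vertices of~$Q$, so $Q$ is a full subquiver of the resulting planar quiver.

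The main obstacle is the face-decomposition claim itself: that a bounded face whose boundary is an arbitrary closed directed walk can always be resolved into coherently oriented simple cycles by a planar, non-crossing family of sink-to-source chords. I would prove this by induction on the number of source and sink corners, at each step selecting a sink~$t$ together with the next source~$s$ reached along a monotone arc, inserting the chord $t\to w\to s$ to split off one coherent cycle, and recursing on the strictly smaller remaining face. The remaining, more tedious verifications are the preliminary surgeries for (a)--(c)---in particular securing $2$-connectivity, so that faces are genuine disks and each vertex occurs exactly once on each incident face boundary---while keeping every addition routed through new vertices and preserving planarity.
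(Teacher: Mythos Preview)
Your approach is correct and reaches the same conclusion, but your treatment of condition~(d) is considerably more elaborate than the paper's. The paper handles each non-oriented bounded face~$F$ with a \emph{single} new interior vertex~$v_F$: draw $u\to v_F$ for every sink~$u$ of the boundary quiver~$Q_F$ and $v_F\to u$ for every source. Since (as you yourself observed) sink corners and source corners alternate along~$\partial F$, consecutive spokes around~$v_F$ always go to a sink and a source, and the boundary arc between them is monotone; hence every resulting subface is an oriented cycle. Your non-crossing sink--source chord matching accomplishes the same subdivision, but with one new vertex per chord and an induction on the number of bad corners, in place of a one-line star construction. Both arguments buy the same thing (all new arrows pass through fresh vertices, so the original quiver remains full), but the star is shorter and avoids the planarity bookkeeping of the non-crossing matching.

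Two points in your write-up need tightening. First, attaching a short directed cycle at a cut vertex does \emph{not} remove it as a cut vertex, so your proposed surgery does not secure $2$-connectivity; in fact $2$-connectivity is more than is needed, and the paper instead cuts any non-simply-connected bounded face into simply connected pieces with $\bullet\to\bullet\to\bullet$ paths, which already suffices for~(c). Second, your closing parity paragraph treats ``incident arrows alternate in/out at every interior vertex'' as if it were condition~(d). That alternation is a \emph{consequence} of~(d) for vertices not touching the unbounded face, not an equivalent reformulation, and it is unnecessary once your chord argument has made every bounded face an oriented cycle.
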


\begin{proof}
We first add pairs of arrows $\,\red{\bullet\!\to\!\bullet\!\to\!\bullet}\,$ to the quiver 
(here the vertex in the middle is new)
to make it connected, to get rid of univalent vertices,
and to cut up each bounded face that is not simply connected
%that violates condition~(c) in Lemma~\ref{lem:quivers-from-plabic} into faces satisfying that condition.
into pieces that are. 
(To convince yourself that the latter can always be accomplished, 
recall that a quiver cannot contain loops, i.e., arrows connecting a vertex to itself.) 
 
It remains to take care of condition~(d). Let $F$ be a bounded face of the resulting quiver.
Suppose that $F$ is not oriented. 
Let $Q_F$ denote the quiver formed by the vertices and arrows of~$Q$
lying on the boundary of~$F$. 
(There might exist arrows which connect some of those vertices to each other
but do not lie on~$\partial F$;
those arrows are not included in~$Q_F$.) 
We now augment $Q$ by adding an extra vertex~$v_F$ lying inside~$F$, 
and by adding new arrows to~$Q$ as follows:  
\begin{itemize}[leftmargin=.2in]
\item for every sink~$u$ of~$Q_F$, draw an arrow $u\color{cyan}\to\color{black} v_F$; 
\item for every source~$u$ of~$Q_F$, draw an arrow $v_F\color{cyan}\to\color{black} u$. 
\end{itemize}
Once this is done for  all such faces~$F$, we obtain a quiver all of whose bounded faces are oriented. 
See Figure~\ref{fig:making-faces-oriented}. 
\end{proof}

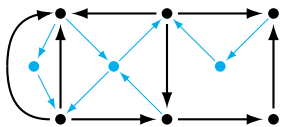
\begin{figure}[ht]
\begin{center}
\setlength{\unitlength}{2pt} 
\begin{picture}(60,20)(-8,0)
\thicklines 

\multiput(0,0)(20,0){3}{\circle*{2}}
\multiput(0,20)(20,0){3}{\circle*{2}}
\put(2,0){\vector(1,0){16}}
\put(22,0){\vector(1,0){16}}
\put(22,20){\vector(1,0){16}}
\put(18,20){\vector(-1,0){16}}
\put(0,2){\vector(0,1){16}}
\put(20,18){\vector(0,-1){16}}
\put(40,2){\vector(0,1){16}}
\qbezier(-2,0)(-10,0)(-10,10)
\qbezier(-2,20)(-10,20)(-10,10)
\put(-2.3,19.95){\vector(6,1){1}}
\thinlines
\color{cyan}
\multiput(10,10)(20,0){2}{\circle*{2}}
\put(-5,10){\circle*{2}}
\put(-1,18){\vector(-1,-2){3.3}}
\put(-4.2,8.4){\vector(1,-2){3.3}}
\put(11,11){\vector(1,1){8}}
\put(9,9){\vector(-1,-1){8}}
\put(1,19){\vector(1,-1){8}}
\put(19,1){\vector(-1,1){8}}
\put(29,11){\vector(-1,1){8}}
\put(39,19){\vector(-1,-1){8}}

\color{black}
\end{picture}
\vspace{-.1in}
\end{center}
\caption{Embedding a planar quiver into a quiver with oriented bounded~faces.}
\vspace{-.1in}
\label{fig:making-faces-oriented}
\end{figure}

\pagebreak[3]

\begin{proof}[Proof of Theorem~\ref{th:plabic-universal}] 

Lemmas~\ref{lem:quivers-from-plabic} and~\ref{lem:oriented-faces} imply that 
any planar quiver~$Q$ can be embedded (as a full subquiver) 
into a quiver $Q(P)$ coming from a plabic graph~$P$. 
This embedding is illustrated in Figure~\ref{fig:extended-Somos-into-plabic} 
for the case of the extended Somos quiver. 

Taking account of the number of vertices and arrows added in the course of such embedding
(cf.\ the proofs of Lemmas~\ref{lem:quivers-from-plabic} and~\ref{lem:oriented-faces}),
we conclude that if a planar quiver~$Q$ has $k$ vertices and $\ell$ arrows,
then the number of vertices (resp., arrows) of the ambient quiver $Q(P)$ are both bounded by $O(k+\ell)$. 
Letting $Q$ be an $n$-universal planar quiver 
from Theorem~\ref{thm:planar-universal}, we obtain the statement of Theorem~\ref{th:plabic-universal}. 
\end{proof}

\newsavebox{\wb}
\setlength{\unitlength}{2pt} 
\savebox{\wb}(10,5)[bl]{
\thicklines 
\put(0,0){\circle{2}} 
\put(10,0){\circle*{2}} 
\qbezier(0.8,0.6)(5,3)(9.2,0.6)
\qbezier(0.8,-0.6)(5,-3)(9.2,-0.6)
}

\newsavebox{\bw}
\setlength{\unitlength}{2pt} 
\savebox{\bw}(10,5)[bl]{
\thicklines 
\put(0,0){\circle*{2}} 
\put(10,0){\circle{2}} 
\qbezier(0.8,0.6)(5,3)(9.2,0.6)
\qbezier(0.8,-0.6)(5,-3)(9.2,-0.6)
}

\newsavebox{\horiz}
\setlength{\unitlength}{2pt} 
\savebox{\horiz}(10,0)[bl]{
\thicklines 
\put(1,0){\line(1,0){8}}
}

\begin{figure}[ht]
\begin{center}
\vspace{-.4in}
\begin{picture}(130,150)(12,-15) 
\thicklines
\put(20,60){{\usebox{\wb}}} 
\put(40,60){{\usebox{\wb}}} 
\multiput(70,40)(0,40){2}{{\usebox{\bw}}} 
\multiput(100,40)(0,40){2}{{\usebox{\bw}}} 
%\put(0,0){\makebox(0,0){\usebox{\digon}}} 
\put(9.5,61){\line(-1,2){9}}
\put(9.5,59){\line(-1,-2){9}}
\put(60.5,61){\line(1,2){9}}
\put(60.5,59){\line(1,-2){9}}
\put(90,40){\line(0,1){39}}
\put(120,40){\line(0,1){39}}
\put(120.5,81){\line(1,2){9}}
\put(120.5,39){\line(1,-2){9}}
\multiput(10,60)(20,0){3}{{\usebox{\horiz}}} 
\multiput(80,40)(0,40){2}{{\usebox{\horiz}}} 
\multiput(90,40)(0,40){2}{{\usebox{\horiz}}} 
\multiput(110,40)(0,40){2}{{\usebox{\horiz}}} 
\put(0,40){\circle{2}} 
\put(0,80){\circle{2}} 
\put(10,60){\circle*{2}} 
\put(60,60){\circle{2}} 
\put(90,40){\circle*{2}} 
\put(90,80){\circle{2}} 
\put(120,40){\circle*{2}} 
\put(120,80){\circle*{2}} 
\put(130,20){\circle{2}} 
\put(130,100){\circle{2}} 
\qbezier(0.6,39.2)(60,-30)(129.2,19.4)
\qbezier(-0.6,40.8)(-14,60)(-0.6,79.2)
\qbezier(0.6,80.8)(60,150)(129.2,100.6)
\qbezier(130.8,20.6)(178,60)(130.8,99.4)

\put(0,60){\red{\makebox(0,0){$u$}}}
\put(0,60){\red{\circle{6}}}
\put(80,10){\red{\makebox(0,0){$2$}}}
\put(80,10){\red{\circle{6}}}
\put(80,60){\red{\makebox(0,0){$4$}}}
\put(80,60){\red{\circle{6}}}
\put(80,110){\red{\makebox(0,0){$3$}}}
\put(80,110){\red{\circle{6}}}
\put(110,60){\red{\makebox(0,0){$1$}}}
\put(110,60){\red{\circle{6}}}
\put(135,60){\red{\makebox(0,0){$v$}}}
\put(135,60){\red{\circle{6}}}

\red{
\qbezier(2,56)(30,8)(76,8)
\qbezier(2,64)(30,112)(76,112)
\put(2.6,55){\vector(-1,1.7){1}}
\put(76,112){\vector(1,0){1}}

\qbezier(75,10)(-45,60)(75,110)
\put(75,10){\vector(2.5,-1){1}}

\qbezier(75,12)(-5,60)(75,108)
\put(75,12){\vector(1.7,-1){1}}

\qbezier(76.5,13.5)(30,60)(76.5,106.5)
\put(76,14){\vector(1,-1){1}}

\qbezier(79,14)(50,50)(76,58)
\put(76,58){\vector(2.5,1){1}}

\qbezier(79,106)(50,70)(76,62)
\put(78.5,105.4){\vector(1,1.2){1}}

\qbezier(82,14)(88,40)(82,56)
\put(82.7,53.9){\vector(-1,3){1}}

\qbezier(82,106)(90,90)(107,63)
\put(82.6,104.8){\vector(-1,2){1}}

\put(84,60){\vector(1,0){22}}

\qbezier(84,11)(125,30)(112,56)
\put(112.5,55){\vector(-1,2){1}}

\qbezier(84,109)(125,90)(112,64)
\put(84.4,108.8){\vector(-2,1){1}}

\qbezier(84,9)(135,10)(135,56)
\put(84.3,9){\vector(-1,0){1}}

\qbezier(84,111)(135,110)(135,64)
\put(135,64.3){\vector(0,-1){1}}
}
\end{picture} 
\vspace{-.4in}
\end{center}
\caption{Embedding the extended Somos quiver~$Q$ (see Figure~\ref{fig:flattened-Somos-4})
into a quiver of a plabic graph~$P$.
Removing the vertices of the latter quiver 
corresponding to the bigon faces of~$P$, we obtain~$Q$ (shown in the picture). 
}
\vspace{-.1in}
\label{fig:extended-Somos-into-plabic}
\end{figure}
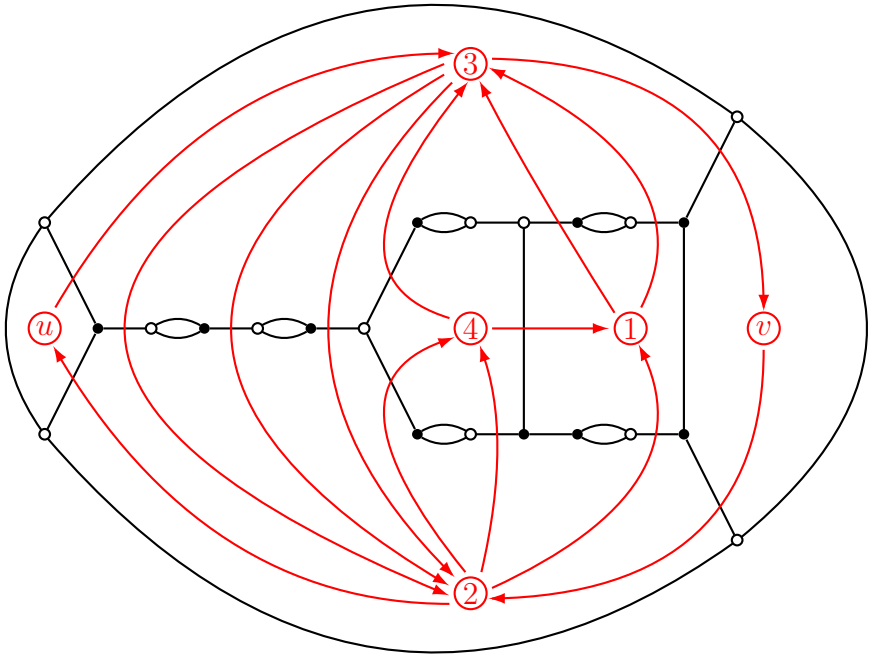

\begin{remark}
\label{rem:reduced-plabic-not-universal}
There is a large body of research focusing on the important subclass of 
\emph{reduced} plabic graphs. Reduced plabic graphs describe cluster structures in 
Grassmannians, basic affine spaces, and most generally, arbitrary positroid varieties~\cite{Galashin-Lam}. \linebreak[3]
Remarkably, the analogue of Theorem~\ref{th:plabic-universal} fails for reduced plabic graphs,
for the following reason. 
As shown by N.~Ford and K.~Serhiyenko \cite[Theorem~1.2]{ford-serhiyenko},
the quiver associated with any reduced plabic graph has a \emph{reddening sequence}. 
This property is preserved by quiver mutations and by passing to a full subquiver, 
see G.~Muller \cite[Theorem 17, Corollary~19]{muller-max-green}. 
Consequently any quiver without a reddening sequence
(e.g., the Markov quiver, cf.\ Figure~\ref{fig:Markov-quiver})  
cannot be embedded into a quiver of a reduced plabic graph. 
Thus, restricting the study of plabic graphs to the reduced \linebreak[3]
case limits the theory
to a proper subclass of ``nice'' quivers (and associated cluster structures)
whereas general plabic graphs contain arbitrarily ``nasty'' cluster types. 
\end{remark}

\newpage

\section{Universal skew-symmetrizable matrices}
\label{sec:universal-matrices}

In this section, we extend Theorem~\ref{thm:main-thm-quivers} to the wider generality of skew-sym\-met\-rizable matrices. 
We start by recalling the basic background, cf., e.g., \cite[Section~2.7]{FWZ}. 

\begin{definition} 
An $n \times n$ matrix $B = (b_{ij})$ with integer entries is called
\emph{skew-symmetrizable} if there exist positive integers $d_1,..., d_n$ such that $d_ib_{ij} = -d_j b_{ji}$ for all $i,j\in\{1,\dots,n\}$. 
Equivalently, there exists a diagonal matrix $D=\operatorname{diag}(d_1,\dots,d_n)$ with positive diagonal entries $d_1,...,d_n$ such that the matrix $DB$ is skew-symmetric. Such a matrix $D$ is called a \emph{symmetrizer} for~$B$. 
\end{definition}

\begin{definition} 
\label{def:matrix-mutation}
Let $B$ be an $n \times n$  skew-symmetrizable
integer matrix. 
For $k\in\{1,...,n\}$, the \emph{matrix mutation $\mu_k$ in direction $k$} transforms $B$ into
the  $n \times n$ matrix $\mu_k(B)=(b_{ij}')$ whose entries are given as follows:
\begin{equation}
\label{eq:matrix-mutation}
b_{ij}'=
\begin{cases} -b_{ij} &\text{ if }i=k\text{ or }j=k; \\
b_{ij}+b_{ik}b_{kj} &\text{ if }b_{ik}>0\text{ and }b_{kj}>0;\\
b_{ij}-b_{ik}b_{kj} &\text{ if }b_{ik}<0\text{ and }b_{kj}<0;\\ 
b_{ij} &\text{ otherwise.}  
\end{cases}
\end{equation}
Two skew-symmetrizable matrices $B$ and $B'$ are called \emph{mutation
equivalent} (denoted \hbox{$B\sim B'$}) if one can get from $B$ to $B'$ by a sequence of mutations.
\end{definition}

\begin{remark}
To any quiver~$Q$ with vertices $1,\dots,n$, we can associate a skew-symmetric matrix $B=B(Q)=(b_{ij})$ by setting
\[
b_{ij}=\begin{cases}
\quad \, \text{number of arrows pointing from $i$ to~$j$} &\text{if there are any;}\\
-(\text{number of arrows pointing from $j$ to~$i$}) &\text{otherwise.}
\end{cases}
\]
Under this correspondence, mutations of quivers translate into mutations of associated matrices. Thus, Definition~\ref{def:matrix-mutation} is a generalization of Definition~\ref{def:quiver-mutation}. 
\end{remark}

It is easy to see that if $D$ is a symmetrizer for~$B$, and $B\sim B'$, then $D$ is also a symmetrizer for~$B'$. 

\begin{definition}
Let $B$ be a skew-symmetrizable $n\times n$ matrix, and $I$ a subset of $\{1,\dots,n\}$. We denote by $B_I$ the \emph{principal submatrix} of~$B$ supported on~$I$, i.e., the matrix obtained from~$B$ by selecting the rows and columns belonging to~$I$. 
\end{definition}

\begin{remark}
The notion of a principal submatrix is a straightworward extension of the notion of a full subquiver, see Definition~\ref{def:full-subquiver}: for a matrix $B(Q)$ associated with a quiver~$Q$, we have $B(Q)_I=B(Q_I)$. 

Lemma~\ref{lem:restriction-mutation} extends to this generality: for $k\in I$, we have $\mu_k(B_I)=(\mu_k(B))_I$. 
\end{remark}

\pagebreak[3]

We can now extend Definition~\ref{def:n-universal}. 

\begin{definition}
Let $D=\operatorname{diag}(d_1,\dots,d_n)$ be a diagonal matrix with positive integer diagonal entries $d_1,...,d_n$. 
A skew-symmetrizable matrix $B$ is called $D$-\emph{universal} if every $n\times n$ skew-symmetrizable matrix with symmetrizer $D$  is a principal submatrix of a matrix mutation equivalent to  $B$. 
\end{definition}

Our main result (Theorem~\ref{thm:main-thm-quivers}) directly extends to the skew-symmetrizable case. 

\begin{theorem}
\label{main_thm1}
For any $n\times n$ diagonal matrix $D$ with positive integer entries, there exists a $D$-universal matrix of size $(2n^2\!-\!n)\!\times\!(2n^2\!-\!n)$ with $2(7n^2\!-\!7n)$ nonzero~entries. 
\end{theorem}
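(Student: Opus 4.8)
The plan is to mimic the proof of Theorem~\ref{thm:main-thm-quivers} as closely as possible, replacing quivers by skew-symmetrizable matrices and full subquivers by principal submatrices, and to isolate the new difficulty in the symmetrizability bookkeeping. The overall architecture will be a gluing construction: I would first prove a matrix analogue of the gluing Lemma~\ref{lem:gluing}, asserting that if one has a ``building block'' matrix $B$ of size $(k+2)\times(k+2)$ with two distinguished indices $u,v$ such that mutations avoiding $u$ and $v$ can realize any desired integer entry $b_{uv}$ (with the correct symmetrizer-compatible value $b_{vu}$ determined by $d_u b_{uv}=-d_v b_{vu}$), then gluing $\binom{n}{2}$ copies of $B$ along a totally disconnected core on $n$ indices produces a $D$-universal matrix. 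The counting is identical to the quiver case, giving $n+k\binom{n}{2}=2n^2-n$ when $k=4$ and the stated number of nonzero entries when the block has $14$ nonzero entries.

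Next I would supply the building block. The key new subtlety is that in the skew-symmetrizable world the two special indices $u,v$ must be allowed to carry arbitrary symmetrizer values $d_u,d_v$, and the entry that needs to range over all of $\ZZ$ is a single matrix entry (not literally a number of parallel arrows). My plan is therefore to take the extended Somos-4 ``quiver'' from Figure~\ref{fig:extended-somos-4}, reinterpret it as a skew-symmetric matrix, and then \emph{rescale} it into a genuinely skew-symmetrizable (but not skew-symmetric) block whose symmetrizer has the prescribed values $d_u,d_v$ at the two special indices and value $1$ elsewhere. Concretely, for each pair $\{i,j\}$ of core indices I would attach a copy of the block in which $u$ is assigned the symmetrizer entry $d_i$ and $v$ the entry $d_j$; the interior four indices keep symmetrizer value~$1$. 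One must check that the mutation sequence $\mu_1,\mu_2,\mu_3,\mu_4,\dots$ producing the arithmetic progression \eqref{eq:0011111...} of $u$-to-$v$ arrows in the skew-symmetric case carries over verbatim after this diagonal rescaling, which it does because matrix mutation is equivariant under conjugation by a fixed symmetrizer-compatible rescaling when the mutated indices all have symmetrizer value~$1$.

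The heart of the verification — and the step I expect to be the main obstacle — is confirming that the rescaled block is genuinely $D$-universal \emph{in the matrix sense}: one must produce, for each target $n\times n$ matrix $B^\circ$ with symmetrizer $D$, a mutation sequence realizing the exact entries $b^\circ_{ij}$ and $b^\circ_{ji}$ simultaneously on the core, not merely the absolute value of one of them. In the pure-quiver setting the sign and the single integer $b_{uv}$ determine everything, but here I must track that the off-diagonal pair $(b_{uv},b_{vu})=(m,\,-\tfrac{d_u}{d_v}m)$ runs over \emph{all} symmetrizer-compatible pairs as $m$ ranges over $\ZZ$, and that the block can realize each such $m$ with each sign. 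The gluing lemma then guarantees the per-pair realizations do not interfere, since the interior indices of distinct copies are non-adjacent. I would verify the block computation with a mutation-applet check as in the proof of Lemma~\ref{lem:6-vertex-quivers}, and the remaining work is the (routine but careful) checking that diagonal rescaling commutes with the relevant mutations and that the symmetrizer $D$ is preserved throughout — a fact already recorded in the remark that mutation-equivalent matrices share a symmetrizer.
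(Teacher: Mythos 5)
Your architecture is essentially the paper's: keep the glued extended Somos-4 quiver, rescale each $6\times 6$ block by a diagonal matrix, and use the fact that mutation at an index whose rescaling factor is $1$ commutes with right multiplication by the rescaling matrix (the paper records this as Lemma~\ref{lem:mu-BD}). However, your specific choice of symmetrizer---$d_i$ and $d_j$ at the two special vertices and $1$ at the four interior vertices---creates a genuine gap. With interior symmetrizer entries equal to~$1$, connectivity of the block forces the rescaled block to be $\overline B_{ij}\cdot\operatorname{diag}(cd_i,cd_j,c,c,c,c)$ for a positive integer~$c$, and since every mutation you are permitted to use (interior only) commutes with this right multiplication, the achievable $(i,j)$-entries on the core are exactly $cd_j\cdot\ZZ$. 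But an arbitrary skew-symmetrizable matrix $B$ with symmetrizer $D$ only satisfies $b_{ij}\in \frac{d_j}{\gcd(d_i,d_j)}\ZZ$ (this is the paper's Lemma~\ref{lem:bij-divisible}), and whenever $\gcd(d_i,d_j)>1$ this is a strictly larger set. Concretely, for $D=\operatorname{diag}(2,2)$ the matrix $\bigl[\begin{smallmatrix}0&1\\-1&0\end{smallmatrix}\bigr]$ has symmetrizer $D$ but is unreachable in your construction, which produces only even entries. Relatedly, your parametrization ``$(b_{uv},b_{vu})=(m,-\tfrac{d_u}{d_v}m)$ for $m\in\ZZ$'' of the compatible pairs is off: $-\tfrac{d_u}{d_v}m$ need not be an integer, and the correct parametrization is $(kh_{ij},-kh_{ji})$ with $h_{ij}=d_j/\gcd(d_i,d_j)$, $k\in\ZZ$.

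The repair is exactly the arithmetic choice the paper makes: assign the four interior vertices of the $(i,j)$-block the symmetrizer value $\gcd(d_i,d_j)$ rather than~$1$, so that the rescaling matrix becomes $H_{ij}=\operatorname{diag}(h_{ji},h_{ij},1,1,1,1)$ and the achievable $(i,j)$-entries become $h_{ij}\ZZ$---by Lemma~\ref{lem:bij-divisible} precisely the set of entries that can occur in a matrix with symmetrizer~$D$. With that change, the rest of your argument (commutation of interior mutations with $H_{ij}$ via Lemma~\ref{lem:mu-BD}, non-interference of distinct blocks, restriction to the core, preservation of the symmetrizer) goes through as you describe.
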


To prove Theorem~\ref{main_thm1}, we will need a couple of lemmas. 

\begin{lemma}
\label{lem:mu-BD}
Let $B$ be an $n\times n$ skew-symmetric matrix, and $H=\operatorname{diag}(h_1,\dots,h_n)$ a diagonal matrix with positive integer entries. If $h_k=1$, then $\mu_k(BH)=\mu_k(B)H$. 
\end{lemma}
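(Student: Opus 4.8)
The plan is to verify the asserted identity entrywise, directly from the mutation formula~\eqref{eq:matrix-mutation}. Write $C=BH$, so that $c_{ij}=b_{ij}h_j$ for all $i,j$, and let $b'_{ij}$ denote the entries of $\mu_k(B)$. Two preliminary remarks set up the computation. First, $C$ is itself skew-symmetrizable, so~\eqref{eq:matrix-mutation} genuinely applies to it: taking the symmetrizer to be $H$, one has $HC=HBH$, and $(HBH)_{ji}=h_j b_{ji}h_i=-h_i b_{ij}h_j=-(HBH)_{ij}$ since $B$ is skew-symmetric. Second, because every $h_j$ is a positive integer, the sign of $c_{ij}=b_{ij}h_j$ equals the sign of $b_{ij}$; in particular $c_{ik}=b_{ik}h_k$ has the sign of $b_{ik}$ and $c_{kj}=b_{kj}h_j$ has the sign of $b_{kj}$. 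Hence, for each pair $(i,j)$, the branch of~\eqref{eq:matrix-mutation} selected for $C$ in direction $k$ is exactly the branch selected for $B$ in direction $k$, and the two computations can be compared case by case.

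I would then run through the four branches, checking in each that $\mu_k(C)_{ij}=b'_{ij}h_j=(\mu_k(B)H)_{ij}$. When $i=k$ we get $-c_{kj}=-b_{kj}h_j=b'_{kj}h_j$, and when $j=k$ we get $-c_{ik}=-b_{ik}h_k=b'_{ik}h_k$; both match without any constraint on $h_k$. In the two quadratic branches the decisive step is the identity $c_{ik}c_{kj}=(b_{ik}h_k)(b_{kj}h_j)=b_{ik}b_{kj}h_j$, where the second equality uses $h_k=1$. Feeding this in gives
\[
c_{ij}\pm c_{ik}c_{kj}=b_{ij}h_j\pm b_{ik}b_{kj}h_j=(b_{ij}\pm b_{ik}b_{kj})h_j=b'_{ij}h_j,
\]
as required. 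The final ``otherwise'' branch is trivial, since both sides equal $b_{ij}h_j$.

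The only load-bearing hypothesis is $h_k=1$, which I regard as the conceptual crux rather than a real obstacle: it is precisely what prevents the quadratic correction term $c_{ik}c_{kj}$ from acquiring a spurious factor $h_k$, so that each mutated entry retains exactly the single column factor $h_j$ demanded by right-multiplication by~$H$. Indeed, if $h_k\ne 1$ the two sides would differ by $b_{ik}b_{kj}(h_k-1)h_j$ in the quadratic branches, so the hypothesis cannot be dropped. With the sign observation ensuring the branches line up and the entrywise identities verified in every case, the equality $\mu_k(BH)=\mu_k(B)H$ follows.
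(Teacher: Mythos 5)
Your proof is correct and follows essentially the same route as the paper: an entrywise comparison via the mutation formula \eqref{eq:matrix-mutation}, using positivity of the $h_j$ to match the branches and $h_k=1$ to keep the quadratic term from picking up an extra factor. The paper's version is just a terser writeup of the same computation, so no further comment is needed.
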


\begin{proof}
Let $B=(b_{ij})$. Then we have, for any $i,j\in\{1,\dots,n\}$ (cf.~\eqref{eq:matrix-mutation}): 
\[
(\mu_k(BH))_{ij}=\begin{cases} -b_{ij}h_j &\text{ if }i=k\text{ or }j=k; \\
b_{ij}h_j+b_{ik}h_k b_{kj} h_j &\text{ if }b_{ik}>0\text{ and }b_{kj}>0;\\
b_{ij}h_j-b_{ik}h_k b_{kj} h_j&\text{ if }b_{ik}<0\text{ and }b_{kj}<0;\\ 
b_{ij}h_j &\text{ otherwise.}  
\end{cases}
\] 
Since $h_k=1$, it follows that $(\mu_k(BH))_{ij}=(\mu_k(B))_{ij}h_j=(\mu_k(B)H)_{ij}$. 
\end{proof}

\begin{lemma}
\label{lem:bij-divisible}
Let $B=(b_{ij})$ be a skew-symmetrizable $n\times n$ matrix with symmetrizer $D=\operatorname{diag}(d_1,\dots,d_n)$. 
For $i,j\in\{1,\dots,n\}$, denote
\begin{equation}
\label{eq:hij}
h_{ij}=\frac{d_j}{\gcd(d_i,d_j)} .
\end{equation}
Then %$b_{ij}\equiv 0\bmod h_{ij}$, and similarly $b_{ji}\equiv 0\bmod h_{ji}$. Moreover
there exists an integer $k$ such that 
\[
\begin{bmatrix}
0 & b_{ij} \\
b_{ji} & 0
\end{bmatrix}
= \begin{bmatrix}
0 & kh_{ij} \\
-kh_{ji} & 0
\end{bmatrix} .
\]
\end{lemma}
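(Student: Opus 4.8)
The plan is to reduce the symmetrizability relation to a coprimality statement and then invoke Euclid's lemma. First I would set $g=\gcd(d_i,d_j)$ and write $d_i=g\alpha$ and $d_j=g\beta$, so that by construction $\alpha=h_{ji}$ and $\beta=h_{ij}$ are coprime positive integers. The defining property of the symmetrizer, $d_ib_{ij}=-d_jb_{ji}$, then simplifies after cancelling the common factor $g$ to the single relation $\alpha\,b_{ij}=-\beta\,b_{ji}$.

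Next I would extract the required divisibility. Since $\beta$ divides the left-hand side $\alpha\,b_{ij}$ and $\gcd(\alpha,\beta)=1$, Euclid's lemma forces $\beta\mid b_{ij}$. I would therefore set $k=b_{ij}/\beta=b_{ij}/h_{ij}\in\ZZ$, so that $b_{ij}=k\,h_{ij}$. Substituting this back into $\alpha\,b_{ij}=-\beta\,b_{ji}$ and cancelling $\beta$ yields $\alpha k=-b_{ji}$, that is, $b_{ji}=-k\,h_{ji}$. These two identities are precisely the entries of the asserted matrix equation, so a single integer $k$ works for both off-diagonal positions simultaneously.

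I do not anticipate any genuine obstacle here: the whole content is the passage from the two-variable relation $d_ib_{ij}=-d_jb_{ji}$ to one shared integer $k$, and this is forced purely by the coprimality of $\alpha$ and $\beta$. The only minor point to check is the degenerate case $b_{ij}=b_{ji}=0$, which is automatically handled by taking $k=0$, and the fact that $k$ is independent of whether one starts the divisibility argument from $b_{ij}$ or from $b_{ji}$ (both give the same $k$, as the final cancellation shows).
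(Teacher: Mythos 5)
Your argument is correct and is essentially identical to the paper's proof: both divide the symmetrizer relation $d_ib_{ij}=-d_jb_{ji}$ by $\gcd(d_i,d_j)$ to obtain $h_{ji}b_{ij}=-h_{ij}b_{ji}$ and then use the coprimality of $h_{ij}$ and $h_{ji}$ to conclude that $k=b_{ij}/h_{ij}=-b_{ji}/h_{ji}$ is an integer. You simply spell out the Euclid's-lemma step that the paper leaves implicit.
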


\begin{proof}
By the definition of a symmetrizer, we have $d_i b_{ij}=-d_j b_{ji}$. Dividing by $\gcd(d_i,d_j)$, we get $h_{ji} b_{ij}=-h_{ij} b_{ji}$.
It remains to observe that $\gcd(h_{ij},h_{ji})=1$, and therefore the number 
$k=\frac{b_{ij}}{h_{ij}}=-\frac{b_{ji}}{h_{ji}}$ 
 is an integer. 
\end{proof}

\begin{proof}[Proof of Theorem~\ref{main_thm1}]
As explained below, the requisite $D$-universal matrix is constructed in the same way as the $n$-universal quiver~$\overline Q$ in Theorem~\ref{thm:main-thm-quivers}, with slight modifications. 

We continue to use the notation from the proof of Theorem~\ref{thm:main-thm-quivers}. 
Let $\overline B=B(\overline Q)$ be the skew-symmetric matrix corresponding to the quiver~$\overline Q$. All nonzero entries of~$\overline B$ are located inside $\binom{n}{2}$ blocks $\overline B_{ij}$ of size $6\times 6$, labeled by the pairs $i,j\in\{1,\dots,n\}$ of vertices in~$Q_\bullet$. Each block $\overline B_{ij}$ has the same form, and corresponds to a copy (here denoted~$Q_{ij}$) of the extended Somos-4 quiver. The marked vertices of~$Q_{ij}$ (those labeled $u$ and~$v$ in Figures~\ref{fig:extended-somos-4} and~\ref{fig:flattened-Somos-4}) are the vertices $i$ and~$j$ of~$Q_\bullet$. We label the remaining four vertices by the triples $(i,j,1)$, $(i,j,2)$, $(i,j,3)$, $(i,j,4)$. With this notation, the block $\overline B_{ij}$ is represented as follows:
\begin{equation}
\label{eq:overline-Bij}
\begin{array}{c||cc|cccc}
& i & j & (i,j,1) & (i,j,2) & (i,j,3) & (i,j,4) \\[1pt]
\hline
\hline
\\[-12pt]
i & 0 & 0 & 0 & -1 & 1 & 0  \\
j & 0 & 0 & 0 & 1 & -1 & 0 \\
\hline
\\[-11pt]
(i,j,1) & 0 & 0  & 0 & -1 & 2 & -1  \\
(i,j,2) & 1 & -1 & 1 & 0 & -3 & 2   \\
(i,j,3) & -1 &1 &-2  & 3 & 0 & -1 \\
(i,j,4) & 0 & 0 & 1  & -2 & 1 &0 
\end{array}
\end{equation}
%Here $i,j\in\{1,\dots,n\}$ are the labels of two vertices in~$Q_\bullet$ which are identified with the marked vertices $u$ and $v$ of this particular copy of the extended Somos-4 quiver. We label the remaining four vertices of the latter copy by the triples $(i,j,1)$, $(i,j,2)$, $(i,j,3)$, $(i,j,4)$, cf.\ Figure~\ref{fig:6-vertex-quivers} or Figure~\ref{fig:flattened-Somos-4}.  

Let $D=\operatorname{diag}(d_1,\dots,d_n)$. For $i,j\in\{1,\dots,n\}$, we denote
\[
H_{ij}=\operatorname{diag}(h_{ji},h_{ij},1,1,1,1), 
\]
where we use the notation~\eqref{eq:hij}. 

To construct the desired $D$-universal matrix~$\overline B_D$, we replace each block $\overline B_{ij}$ in~$\overline B$ by the product~$\overline B_{ij} H_{ij}$. In other words, we replace~\eqref{eq:overline-Bij} by the $6\times 6$ block
\begin{equation}
\label{eq:overline-BijH}
\begin{array}{c||cc|cccc}
& i & j & (i,j,1) & (i,j,2) & (i,j,3) & (i,j,4) \\[1pt]
\hline
\hline
\\[-12pt]
i & 0 & 0 & 0 & -1 & 1 & 0  \\
j & 0 & 0 & 0 & 1 & -1 & 0 \\
\hline
\\[-11pt]
(i,j,1) & 0 & 0  & 0 & -1 & 2 & -1  \\
(i,j,2) & h_{ji} & -h_{ij} & 1 & 0 & -3 & 2   \\
(i,j,3) & -h_{ji} &h_{ij} &-2  & 3 & 0 & -1 \\
(i,j,4) & 0 & 0 & 1  & -2 & 1 &0 
\end{array}
\end{equation}
The resulting $(2n^2-n)\times(2n^2-n)$ matrix~$\overline B_D$ is skew-symmetrizable: its symmetrizer has diagonal entry $d_i$ corresponding to each vertex $i$ in~$Q_\bullet$, and diagonal entry~$\gcd(d_i,d_j)$ for each vertex labeled $(i,j,1)$, $(i,j,2)$, $(i,j,3)$, or $(i,j,4)$.  

It remains to show that the matrix $\overline B_D$ is $D$-universal. The general shape of the argument is the same as in the quiver case: we mutate each block separately; these~mutations do not interfere with each other; and at the end, we restrict to the principal submatrix supported on the indexing set $\{1,\dots,n\}$. 
Since the mutated matrix has the same symmetrizer as~$\overline B_D$, the restricted matrix will have symmetrizer~$D$. 

We now need to prove that for any skew-symmetrizable matrix $B=(b_{ij})$ with symmetrizer~$D$, and any $i,j\in\{1,\dots,n\}$, there exists a sequence of mutations, using only the indices $(i,j,1)$, $(i,j,2)$, $(i,j,3)$, or~$(i,j,4)$, which transform the $6\times 6$ matrix $\overline B_{ij} H_{ij}$ shown in~\eqref{eq:overline-BijH} into a matrix whose upper-left $2\times 2$ block is $\Bigl[\begin{array}{cc} 0 & b_{ij}\\ b_{ji} & 0 \end{array}\Bigr]$. 
By Lemma~\ref{lem:bij-divisible}, this $2\times 2$ block has the form $ \Bigl[\begin{array}{cc}
0 & kh_{ij} \\
-kh_{ji} & 0
\end{array}\Bigr]$, for some $k\in\mathbb{Z}$. 
Let us take a sequence of mutations whose indices replicate the mutation sequence required to obtain the desired outcome in the quiver case, i.e., 
to transform $\overline B_{ij}$ into a matrix with the upper-left block $\Bigl[\begin{array}{cc}
0 & k \\
-k & 0
\end{array}\Bigr]$. 
By Lemma~\ref{lem:mu-BD} (which applies since the corresponding components of $H_{ij}$ are equal to~1), each of these mutations commutes with multiplication by $H_{ij}$ on the right, and we are done. 
\end{proof}

\newpage

\section{Hereditary properties and universal collections}
\label{sec:hereditary+universal}

We begin by reviewing the concept of hereditary properties, cf.\ \cite[Section~4.1]{FWZ}. We will then explain the connection between this concept and the notion of universality. 

\begin{definition}[{\cite[Definition 4.1.3]{FWZ}}]
\label{def:hereditary}
A property of quivers (or more generally skew-symmetrizable matrices) is called \emph{hereditary} if it is preserved under restriction to a full subquiver (resp., a principal submatrix). 

Of particular importance are the properties of quivers (resp., skew-symmetrizable matrices) which are both hereditary and \emph{mutation-invariant}, i.e., preserved under mutations. 
Equivalently, one can talk about properties of mutation classes (or the corresponding cluster algebras) which are preserved under restriction, i.e., taking any quiver in a mutation class, restricting to its full subquiver, then taking the mutation class of the latter. 
%Since mutation classes are in one-to-one correspondence with cluster algebras with trivial coefficients, one can also translate the notion of a hereditary property into the language of cluster algebras. 
\end{definition}

\begin{example}
The following properties of a quiver~$Q$ are hereditary and mutation-invariant (see \cite[Section~4.1]{FWZ}, \cite[Section~5.2]{keller-survey}, and \cite[Section~3]{muller-max-green} for details and further references): 
\begin{itemize}[leftmargin=.3in]
\item 
$Q$ has \emph{finite type}, cf.\ \cite{ca2}, \cite[Remark~5.10.9]{FWZ}; %It follows from the finite type classification~\cite{ca2} that a full subquiver of a quiver of finite type (or a principal submatrix of a matrix of finite type) also has finite type. See \cite[Remark~5.10.9]{FWZ}
%\item Tameness. %SF: This is very close to finite mutation type
\item 
$Q$ has \emph{finite mutation type}, cf.\ \cite[Proposition~4.1.4]{FWZ}; 
\item
$Q$ is a full subquiver of a particular (very special) infinite quiver, cf., e.g.,~\cite{Henrich}; 
\item 
$Q$ has \emph{finite/tame representation type}, cf.\ \cite[Theorem 5.5]{keller-survey}; 
\item
any quiver in $[Q]$ has arrow multiplicities $\le k$ (for some $k\in\ZZ_{>0}$), cf.\ \emph{loc.\ cit.}; 
\item 
$Q$ is \emph{mutation acyclic}, see Remark~\ref{rem:acyclic-not-3-universal} above; 
%See \cite[Example~4.1.5]{FWZ}, \cite{BMR}, .  
\item
$Q$ is \emph{embeddable} into a given mutation class, see \cite[Definition 4.1.8]{FWZ};  %or more generally into some mutation class from a given collection. 
\item 
$Q$ admits a \emph{reddening} (``green-to-red'') \emph{sequence} %i.e., a mutation sequence which makes all $c$-vectors negative
\cite[Theorem 17, Corollary~19]{muller-max-green}. 
\end{itemize}
We note that while the existence of a \emph{maximal green sequence} is a hereditary property, it is \underbar{not} mutation-invariant, see~\cite[Theorem~9, Corollary~14]{muller-max-green}. 
%\medskip

None of the above properties holds for all quivers. 
\end{example}

To illustrate the concepts introduced in Definition~\ref{def:hereditary}, 
we provide a curious application, cf.\ Remark~\ref{rem:grid-not-3-universal}. 

\begin{proposition}
\label{prop:grid-no-markov}
Let $Q$ be a quiver mutation equivalent to a grid quiver $A_k\boxtimes A_\ell$, for some $k$ and~$\ell$ (cf.\ Example~\ref{example:grid-quivers}). Then $Q$ cannot  have a full subquiver isomorphic to the Markov quiver, see Figure~\ref{fig:Markov-quiver}. In particular, every grid quiver is not $3$-universal. 
\end{proposition}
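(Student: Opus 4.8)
The plan is to find a property $P$ of quivers that is simultaneously \emph{hereditary} and \emph{mutation-invariant}, that holds for every grid quiver $A_k\boxtimes A_\ell$, and that fails for the Markov quiver; the proposition is then a formal consequence. The property I would use is the existence of a \emph{reddening sequence}. Granting this, suppose for contradiction that some $Q\sim A_k\boxtimes A_\ell$ had a full subquiver isomorphic to the Markov quiver. Since admitting a reddening sequence is mutation-invariant, $Q$ inherits this property from $A_k\boxtimes A_\ell$; since it is also hereditary (see \cite[Theorem~17, Corollary~19]{muller-max-green}), the Markov full subquiver would then admit a reddening sequence too. This contradicts the fact that the Markov quiver has no reddening sequence (cf.\ Remark~\ref{rem:reduced-plabic-not-universal}). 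Hence no quiver mutation equivalent to a grid quiver contains a Markov full subquiver.

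Two inputs thus remain to be secured. The first, that the Markov quiver admits no reddening sequence, is already on record in Remark~\ref{rem:reduced-plabic-not-universal}. The second, and genuine, step is to show that every grid quiver $A_k\boxtimes A_\ell$ admits a reddening sequence. For this I would identify $A_k\boxtimes A_\ell$ with the \emph{square product} $\vec A_k\,\square\,\vec A_\ell$ of two alternately oriented Dynkin quivers of type~$A$: the orientation of the square faces visible in Figure~\ref{fig:grid} is exactly the one produced by the square-product construction. I would then invoke Keller's theorem that the square product of two Dynkin quivers admits a maximal green sequence (his proof of the $Y$-system periodicity conjecture proceeds through precisely such a sequence; cf.\ \cite{keller-survey}). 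Since every maximal green sequence is in particular a reddening sequence, $A_k\boxtimes A_\ell$ admits one, as required. It is essential to pass here to the mutation-invariant notion of a reddening sequence, rather than to a maximal green sequence, since only the former is guaranteed to descend to full subquivers of arbitrary quivers in the mutation class.

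The concluding assertion then follows at once: a quiver is $3$-universal (Definition~\ref{def:n-universal}) exactly when every $3$-vertex quiver---in particular the Markov quiver of Figure~\ref{fig:Markov-quiver}---arises as a full subquiver of some mutation-equivalent quiver. The main statement shows that this is impossible for the Markov quiver when the ambient mutation class is $[A_k\boxtimes A_\ell]$, and therefore no grid quiver is $3$-universal.

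The main obstacle is the second input: confirming that $A_k\boxtimes A_\ell$, as drawn, coincides with Keller's square product, and that the cited green-sequence/periodicity result applies to all rectangular sizes $k,\ell$. Should one prefer a more self-contained route, one could instead attempt to prove directly that every grid quiver is \emph{mutation-acyclic} and then apply Remark~\ref{rem:acyclic-not-3-universal} verbatim (the Markov quiver not being mutation-acyclic); however, verifying mutation-acyclicity for large grids looks delicate, so I would favor the reddening-sequence argument above.
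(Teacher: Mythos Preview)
Your proposal is correct and follows essentially the same approach as the paper: both arguments use the existence of a reddening sequence as the hereditary, mutation-invariant property separating grid quivers from the Markov quiver, citing Muller for the hereditariness/invariance and Keller's work on maximal green sequences for square products to handle the grid case. The paper's proof is slightly terser (it cites \cite{marsh-scott} and \cite{keller-maxgreen} rather than \cite{keller-survey} for the grid input), but the logical structure is identical.
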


\begin{proof}
It is well known (cf.\ \cite[Section~1]{marsh-scott}, \cite[Section~5]{keller-maxgreen}) that any grid quiver possesses a maximal green sequence, and therefore a reddening sequence. As mentioned above, the property of having a reddening sequence is both mutation invariant and hereditary \cite[\emph{loc.\ cit.}]{muller-max-green}. Since the Markov quiver does not have this property, the claim follows. 
\end{proof}

The following simple observation relates hereditary properties to the notion of $n$-universality. 

\pagebreak[3]

\begin{remark}
\label{rem:enough-n-universal}
Let $P$ be a mutation-invariant hereditary property of quivers. 
If~some $n$-universal quiver has property~$P$, then any quiver on $\le n$~vertices has this property.
\end{remark}

In most applications, the number of vertices in a quiver is not fixed \emph{a~priori}. With this in mind, we introduce the following adaptation of the concept of universality: 

\begin{definition}
\label{def:universal-collection}
Let $\mathcal{Q}$ be a collection of quivers. We call $\mathcal{Q}$ a \emph{universal collection} if any quiver is a full subquiver of a quiver mutation equivalent to a quiver in~$\mathcal{Q}$. 
\end{definition}

%One is of course interested in constructing universal collections containing a rather limited---and manageable---subset of quivers. 

\begin{remark}
\label{rem:universal-from-n-universal}
Let $Q_2, Q_3,\dots$ be a sequence of quivers such that each quiver $Q_n$~is \hbox{$n$-universal}. (For instance, let $Q_n$ be the $n$-universal quiver constructed in the proof of Theorem~\ref{thm:main-thm-quivers}.) Then $(Q_n)$ is a universal collection. 
\end{remark}

%We make a simple but potentially useful observation: 

\begin{proposition}
\label{prop:universal-hereditary-alternative}
Let $\mathcal{Q}$ be a collection of quivers. Then exactly one of the following two possibilities holds:
\begin{itemize}[leftmargin=.3in]
\item
$\mathcal{Q}$ is a universal collection; 
\item
all the quivers in $\mathcal{Q}$ share a nontrivial mutation-invariant hereditary property.
\end{itemize}
(Here ``nontrivial'' means that not every quiver has this property.) 
\end{proposition}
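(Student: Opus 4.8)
The plan is to exhibit the dichotomy as a genuine either/or by showing the two alternatives are mutually exclusive and jointly exhaustive. The key conceptual move is to package ``being a universal collection'' as the \emph{failure} of a certain mutation-invariant hereditary property, and conversely.

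**Mutual exclusivity.** First I would check the two possibilities cannot hold simultaneously. Suppose $\mathcal{Q}$ is a universal collection and that all quivers in $\mathcal{Q}$ share some nontrivial mutation-invariant hereditary property~$P$. Since $P$ is nontrivial, there is a quiver~$R$ lacking~$P$. By universality, $R$ is a full subquiver of a quiver $Q'$ mutation equivalent to some $Q\in\mathcal{Q}$. Since $Q$ has~$P$ and $P$ is mutation-invariant, $Q'$ has~$P$; since $P$ is hereditary, the full subquiver~$R$ of~$Q'$ also has~$P$, a contradiction. Thus at most one alternative holds.

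**Exhaustiveness.** The heart of the argument is to show that if $\mathcal{Q}$ is \emph{not} a universal collection, then the quivers in~$\mathcal{Q}$ do share a nontrivial mutation-invariant hereditary property. The natural candidate is the property
\[
P(Q):\quad Q \text{ is embeddable into the mutation class of some member of }\mathcal{Q},
\]
i.e.\ $Q$ is a full subquiver of a quiver mutation equivalent to a quiver in~$\mathcal{Q}$. I would argue that $P$ is hereditary: if $Q$ embeds as a full subquiver of some $Q'\sim R\in\mathcal{Q}$, then by Lemma~\ref{lem:restriction-mutation} any full subquiver $Q_J$ of~$Q$ is itself a full subquiver of a quiver in $[Q']=[R]$, so $Q_J$ also has~$P$. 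Mutation-invariance of~$P$ is similar: if $Q$ has~$P$ via the embedding into $Q'\sim R$, then for any mutation $\mu_w(Q)$, one mutates the larger ambient quiver $Q'$ at the corresponding vertex and uses Lemma~\ref{lem:restriction-mutation} again to realize $\mu_w(Q)$ as a full subquiver of $\mu_w(Q')\sim R$. (This is precisely the content of the ``embeddable into a given mutation class'' property listed as hereditary in the example following Definition~\ref{def:hereditary}.) Every $Q\in\mathcal{Q}$ trivially has~$P$ (take the trivial embedding and the empty mutation sequence). Finally, $P$ is nontrivial exactly because $\mathcal{Q}$ is not universal: by definition of universal collection, failure means some quiver is \emph{not} a full subquiver of any quiver mutation equivalent to a member of~$\mathcal{Q}$, i.e.\ some quiver lacks~$P$. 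This produces the desired nontrivial mutation-invariant hereditary property shared by all members of~$\mathcal{Q}$, completing the proof.

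**The main obstacle** I anticipate is verifying cleanly that $P$ is mutation-invariant as a property of \emph{quivers} (not of mutation classes), since one must chase the embedding through a mutation of the \emph{ambient} quiver rather than of~$Q$ itself; Lemma~\ref{lem:restriction-mutation} is exactly the tool that makes this bookkeeping routine, so the difficulty is organizational rather than substantive. One should also take care that the quantifiers line up: ``nontrivial'' must mean some quiver lacks the property, and this is supplied precisely by the negation of universality.
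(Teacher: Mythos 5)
Your proof is correct and follows essentially the same route as the paper: the paper's proof is a two-case analysis (every quiver embeddable vs.\ some quiver not embeddable) whose key observation is, exactly as in your argument, that ``embeddability into~$\mathcal{Q}$'' is a mutation-invariant hereditary property shared by all members of~$\mathcal{Q}$ and nontrivial precisely when $\mathcal{Q}$ fails to be universal. Your explicit verification that this property is hereditary and mutation-invariant (via Lemma~\ref{lem:restriction-mutation}) is a detail the paper leaves implicit, but the substance is identical.
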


\begin{proof}
Let us call a quiver $Q$ \emph{embeddable} into $\mathcal{Q}$ if $Q$ is a subquiver of a quiver mutation equivalent to a quiver in~$\mathcal{Q}$. There are two mutually exclusive possibilities:

\emph{Case~1}: any quiver is embeddable into~$\mathcal{Q}$. Then $\mathcal{Q}$ is universal, cf.\ Definition~\ref{def:universal-collection}. Furthermore, if the quivers in $\mathcal{Q}$ share a mutation-invariant hereditary property, then by Lemma~\ref{lem:enough-universal-collection}, any quiver has that property, so the property is trivial. 

\emph{Case~2}: some quiver~$Q$ is not embeddable into~$\mathcal{Q}$. Then $\mathcal{Q}$ is not universal. Furthermore, embeddability into~$\mathcal{Q}$ is a mutation-invariant and hereditary property which is both nontrivial (since $Q$ does not have it) and shared by all quivers~in~$\mathcal{Q}$. 
\end{proof}

\iffalse
\begin{remark}
Let $\mathcal{Q}$ be a collection of quivers which are in some sense ``manageable,'' i.e., amenable to techniques which are not available for arbitrary quivers. Suppose that we do not know whether $\mathcal{Q}$ is universal or not. 
Proposition~\ref{prop:universal-hereditary-alternative} offers two alternatives, each of which is potentially exciting: 
\begin{itemize}
\item[(a)]
$\mathcal{Q}$ is a universal collection; in this case, many problems concerning arbitrary quivers are immediately reduced to the case of the quivers in~$\mathcal{Q}$; 
\item[(b)] 
all quivers in $\mathcal{Q}$ share some nontrivial mutation-invariant hereditary property;
if that is the case, then which property is it? 
\end{itemize}
\end{remark}
\fi

%In the spirit of Remark~\ref{rem:enough-n-universal}, we make the following easy observation:

\begin{lemma}
\label{lem:enough-universal-collection}
Let $P$ be a hereditary property of quivers, 
and let $\mathcal{Q}$ be a universal collection of quivers. 
 Then: 
\begin{itemize}[leftmargin=.3in]
\item
If any quiver mutation equivalent to a quiver in~$\mathcal{Q}$ has property~$P$, 
then any quiver has property~$P$.
\item
In particular, if $P$ is mutation-invariant, and all quivers in $\mathcal{Q}$ have property~$P$, 
then any quiver has property~$P$.
\end{itemize}
\end{lemma}

\begin{proof}
Since $\mathcal{Q}$ is universal, 
any quiver $Q$ is a full subquiver of a quiver~$Q'$ 
mutation equivalent to a quiver $Q''\in\mathcal{Q}$. 
Since $Q'$ has property~$P$, and $P$ is hereditary, it follows that $Q$ has property~$P$. 
\end{proof}
 
Suppose that we want to prove that a certain mutation-invariant hereditary property holds for all quivers. By Lemma~\ref{lem:enough-universal-collection}, it is enough to establish it for the quivers in some universal collection. 
Since the quivers in this collection may be fairly special, the latter claim may be amenable to proof techniques which are not available for arbitrary quivers. 

%\item existence of an \emph{unfolding} of a valued quiver. 
%The existence of valued quivers which cannot be unfolded implies that the corresponding universal quiver cannot be unfolded.
%which are known to hold for all valued quivers.

We illustrate this idea by proposing a hypothetical strategy for the proof of \emph{Laurent positivity} for cluster algebras. 
%The reader is referred to \cite{ca1, FWZ} for basic background on cluster algebras.
%
%\begin{example}
Let $Q$ be a quiver without frozen vertices, and $\mathcal{A}(Q)$ the associated cluster algebra. (We refer the reader to~\cite{FWZ} for basic background on cluster algebras.) 
The following strengthening of the Laurent Phenomenon for cluster algebras 
was conjectured in~\cite{ca1} and proved in~\cite{GHKK,lee-schiffler}:  
\begin{itemize}
\item[{\rm (LP+)}]
any cluster variable~$z$ in~$\mathcal{A}(Q)$ is expressed in terms of any  cluster~$\mathbf{x}$ of this cluster algebra as a Laurent polynomial with positive coefficients. 
\end{itemize}
No~simple proof of this result is known. As a potential alternative to the highly technical proofs given in~\cite{GHKK} and~\cite{lee-schiffler}, we suggest an approach based on the following elementary result (which does not rely on~\cite{GHKK,lee-schiffler}). 

\begin{lemma}
\label{lem:laurent-positivity-is-hereditary}
The statement {\rm (LP+)} above, viewed as a property of a quiver~$Q$, is both mutation-invariant and hereditary. 
\end{lemma}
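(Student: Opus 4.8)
The plan is to prove the two assertions separately, since mutation-invariance and hereditariness are logically independent.

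\emph{Mutation-invariance.} Fix a quiver $Q$ and a mutation $\mu_v$, and write $Q'=\mu_v(Q)$. I want to show that if (LP+) holds for $Q$, then it holds for $Q'$, and conversely; since $\mu_v$ is an involution (as noted after Definition~\ref{def:quiver-mutation}), it suffices to prove one direction. The key point is that $Q$ and $Q'$ generate the \emph{same} cluster algebra: mutation of a quiver is exactly the combinatorial shadow of cluster mutation, so $\mathcal{A}(Q)=\mathcal{A}(Q')$ as subalgebras of the ambient field of rational functions, and they have the same set of clusters and the same set of cluster variables. The statement (LP+) quantifies over \emph{all} cluster variables $z$ and \emph{all} clusters $\mathbf{x}$ of the cluster algebra; neither of these sets depends on which seed we single out as the ``initial'' one. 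Hence (LP+) for $\mathcal{A}(Q)$ is literally the same statement as (LP+) for $\mathcal{A}(Q')$, and mutation-invariance is immediate once this identification is made.

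\emph{Hereditariness.} This is the substantive part. Let $Q$ satisfy (LP+), let $I$ be a subset of the vertices of $Q$, and let $Q_I$ be the full subquiver. I must show every cluster variable of $\mathcal{A}(Q_I)$, expanded in any cluster of $\mathcal{A}(Q_I)$, is a Laurent polynomial with nonnegative coefficients. The strategy is to relate mutation sequences in $Q_I$ to mutation sequences in $Q$ that use only vertices of $I$. By Lemma~\ref{lem:restriction-mutation}, restriction commutes with mutation, so any seed of $\mathcal{A}(Q_I)$ reached by mutating at vertices $v_1,\dots,v_m\in I$ is the restriction of the seed of $\mathcal{A}(Q)$ reached by the \emph{same} sequence of mutations. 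The plan is then to take a cluster variable $z$ and an arbitrary cluster $\mathbf{x}$ in $\mathcal{A}(Q_I)$, both reached by $I$-mutations, lift them to corresponding objects in $\mathcal{A}(Q)$, invoke (LP+) for $Q$ to get a positive Laurent expansion there, and specialize back down to $Q_I$. The specialization is realized by setting the cluster variables at the vertices \emph{outside} $I$ equal to suitable values; one should check that this substitution carries the $Q$-expansion of $z$ to the genuine $Q_I$-expansion of $z$ and preserves positivity of coefficients.

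\emph{Main obstacle.} The delicate step is the specialization. Setting the ``extra'' variables (those indexed by vertices outside $I$) to $1$ is the natural candidate, since freezing or deleting them should recover the cluster structure of $Q_I$; but one must verify that under this substitution the Laurent expansion of $z$ in $\mathbf{x}$ over $Q$ really does coincide with its Laurent expansion over $Q_I$, rather than collapsing or producing spurious cancellations. Substituting $1$ into a Laurent polynomial with nonnegative coefficients can only keep coefficients nonnegative (it never introduces signs), so positivity survives automatically; the real work is confirming that the resulting polynomial is the correct $Q_I$-expansion and that no denominators are specialized to $0$. I expect this compatibility-of-specialization verification, carried out via the separation-of-additions formula for cluster variables and the commutation in Lemma~\ref{lem:restriction-mutation}, to be the crux of the argument, with the positivity claim itself following formally once the identification is in place.
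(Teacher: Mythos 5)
Your proposal matches the paper's proof in both parts: mutation-invariance is dismissed as immediate (the cluster algebra, its set of clusters, and its set of cluster variables do not depend on which seed is declared initial), and hereditariness is proved by lifting $I$-mutation sequences from $Q_I$ to $Q$ via Lemma~\ref{lem:restriction-mutation}, invoking (LP+) for $Q$, and specializing the variables indexed by the complement $\overline I$ to $1$. The one step you flag as ``to be checked'' --- that this specialization carries the $\mathcal{A}(Q)$-expansion of the lifted variable to the genuine $\mathcal{A}(Q_I)$-expansion --- is precisely how the paper proceeds: it describes the labeled seed pattern of $\mathcal{A}(Q_I)$ as the freeze-and-trivialize specialization of that of $\mathcal{A}(Q)$ (a routine verification on the exchange relations, since setting $x_i=1$ for $i\in\overline I$ turns each exchange relation of $\mathcal{A}(Q)$ at a vertex of $I$ into the corresponding exchange relation of $\mathcal{A}(Q_I)$), and your concern about denominators specializing to zero is moot because the Laurent phenomenon already guarantees the expansion of $\tilde z$ in $\mathbf{\tilde x}$ is a Laurent polynomial, which evaluates harmlessly at $x_i=1$.
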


\begin{proof}
Mutation invariance is immediate by construction. Let us prove that (LP+) is  hereditary. Given a subset of indices~$I$, consider the restricted quiver~$Q_I$ and the corresponding cluster algebra~$\mathcal{A}(Q_I)$. We need to verify that if (LP+) holds for~$\mathcal{A}(Q)$, then it also holds for~$\mathcal{A}(Q_I)$. 

Let $\overline I$ denote the complement of~$I$ in the set of all indices. The labeled seed pattern for~$\mathcal{A}(Q_I)$ is obtained from the labeled seed pattern for~$\mathcal{A}(Q)$ as follows: 
\begin{itemize}[leftmargin=.3in]
\item[(a)]
freeze all indices in~$\overline I$, thereby allowing only mutations~$\mu_i$ in directions $i\in I$; this restricts the seed pattern to a subpattern supported on a smaller regular tree; 
\item[(b)]
trivialize all coefficient variables~$x_i$ ($i\in\overline I$), i.e., specialize $x_i=1$;
\item[(c)]
apply the same specialization as in (b) to each cluster variable in the pattern, viewed as a rational function in some base cluster; the result is well defined, and does not depend on which base cluster we use. 
\end{itemize}
Let $z$ be a cluster variable in the labeled seed pattern for~$\mathcal{A}(Q_I)$, and let $\tilde z$ be the corresponding cluster variable in~$\mathcal{A}(Q)$. Similarly, let~$\mathbf{x}$ be a cluster  for~$\mathcal{A}(Q_I)$, and let~$\mathbf{\tilde x}$ be the corresponding cluster for~$\mathcal{A}(Q)$. 
Then~$z$ (resp.,~$\mathbf{x}$) is obtained from~$\tilde z$ (resp.,~$\mathbf{\tilde x}$) via the specialization described in~(b) above. 
Hence the rational function that expresses $z$ in terms of~$\mathbf{x}$ is obtained from the rational expression for $\tilde z$ in terms of~$\mathbf{\tilde x}$ via the same specialization. If the latter expression is a Laurent polynomial with positive coefficients, then so is the former. Thus the property (LP+) is hereditary. 
\end{proof}

\begin{observation}
By Lemma~\ref{lem:laurent-positivity-is-hereditary}, once we establish the Laurent positivity property (LP+) for a particular universal collection of quivers (say a certain sequence of $n$-universal quivers, cf.\ Remark~\ref{rem:universal-from-n-universal}), (LP+) in full generality follows. 
%It~is not unreasonable to expect that for some universal collection, one can give explicit formulas for the corresponding Laurent expansions, as generating functions for some families of combinatorial objects. Such formulas would immediately imply Laurent positivity for all quivers. 
\end{observation}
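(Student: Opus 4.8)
The plan is to recognize the Observation as an immediate consequence of the two preceding results, applied to the single property (LP+). First I would let $P$ denote the property ``(LP+) holds for $\mathcal{A}(Q)$,'' regarded as a property of a quiver~$Q$. Lemma~\ref{lem:laurent-positivity-is-hereditary} tells us that $P$ is simultaneously mutation-invariant and hereditary, so it is exactly the kind of property to which the universal-collection machinery of Lemma~\ref{lem:enough-universal-collection} applies.

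Next I would fix a universal collection $\mathcal{Q}$; the natural choice is a sequence $(Q_n)$ of $n$-universal quivers, which is universal by Remark~\ref{rem:universal-from-n-universal} (one may take the explicit quivers produced in the proof of Theorem~\ref{thm:main-thm-quivers}). The hypothesis of the Observation is precisely that every quiver in $\mathcal{Q}$ has property~$P$. I would then invoke the second bullet of Lemma~\ref{lem:enough-universal-collection}: since $P$ is mutation-invariant and holds on all of $\mathcal{Q}$, it holds for every quiver. This yields (LP+) for the cluster algebra of an arbitrary quiver, as desired.

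The only point worth spelling out is why mutation-invariance is indispensable here, and this is also where I expect no genuine obstacle to lie within the deduction itself. Establishing $P$ merely on the quivers of $\mathcal{Q}$ is a priori weaker than establishing it on the whole mutation classes $[Q'']$ with $Q''\in\mathcal{Q}$, which is what the hereditariness argument consumes: given an arbitrary quiver $Q$, universality embeds it as a full subquiver of some quiver $Q'$ that is mutation equivalent to a quiver $Q''\in\mathcal{Q}$, and one transfers $P$ from $Q''$ to $Q'$ by mutation-invariance and then from $Q'$ to $Q$ by hereditariness. Thus the real mathematical burden is not in this formal deduction at all, but in the deferred task of proving (LP+) \emph{directly} for the special quivers in~$\mathcal{Q}$; the hope, consistent with the surrounding discussion, is that these highly structured $n$-universal quivers admit an elementary, self-contained verification of Laurent positivity that bypasses the technology of~\cite{GHKK, lee-schiffler}.
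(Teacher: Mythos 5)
Your proposal is correct and follows exactly the route the paper intends: combine Lemma~\ref{lem:laurent-positivity-is-hereditary} (which makes (LP+) mutation-invariant and hereditary) with the second bullet of Lemma~\ref{lem:enough-universal-collection} applied to the universal collection of Remark~\ref{rem:universal-from-n-universal}. Your added remark on why mutation-invariance is needed to pass from the quivers of $\mathcal{Q}$ to their whole mutation classes is accurate and consistent with the paper's argument.
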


%\comment{Is there a similar argument for strong positivity? Seems tricky.}

Another important %mutation-invariant 
hereditary property is the sign-coherence of $c$-vectors. 
Let us quickly recall the relevant definitions. 
(See~\cite{nakanishi-zelevinsky} and~\cite{ca4, keller-fpsac13} for additional details.)

\begin{definition}
Let $Q$ be a quiver on an $n$-element vertex set~$V$. 
The corresponding \emph{framed quiver} $\widetilde Q$ is a quiver on $2n$ vertices
obtained from $Q$ as follows. 
The vertex set of $\widetilde Q$ is the set $V\sqcup \overline V$ where 
$\overline V=\{\bar v:v\in V\}$. The arrows in $\widetilde Q$ are the ones in~$Q$ plus 
$n$ new arrows of the form $v\to \bar v$, for $v\in V$. 
See Figure~\ref{fig:2-universal-quiver-framed}. 
\end{definition}

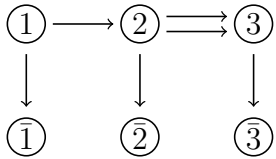
\begin{figure}[ht]
\begin{center}
%\vspace{-.1in}
\begin{tikzpicture}[scale=1,line width=0.6pt]
\node at (1.75,0) {1};
\draw (1.75,0) circle [radius=0.25];
\draw[->](2.1,0)--(2.9,0);
\node at (3.25,0) {2};
\draw (3.25,0) circle [radius=0.25];
\draw[->](3.6,0.1)--(4.4,0.1);
\draw[->](3.6,-0.1)--(4.4,-0.1);
\node at (4.75,0) {3};
\draw (4.75,0) circle [radius=0.25];
\node at (1.75,-1.5) {$\bar 1$};
\draw (1.75,-1.5) circle [radius=0.25];
\draw[<-](1.75,-1.1)--(1.75,-0.4);
\node at (3.25,-1.5) {$\bar 2$};
\draw (3.25,-1.5) circle [radius=0.25];
\draw[<-](3.25,-1.1)--(3.25,-0.4);
\node at (4.75,-1.5) {$\bar 3$};
\draw (4.75,-1.5) circle [radius=0.25];
\draw[<-](4.75,-1.1)--(4.75,-0.4);
\end{tikzpicture}
\end{center}
\caption{The framed quiver $\widetilde Q$ for the $2$-universal quiver~$Q$ in Figure~\ref{fig:2-universal-quiver}. }
\vspace{-.2in}
\label{fig:2-universal-quiver-framed}
\end{figure}

The following \emph{sign coherence} property was implicitly conjectured in~\cite{ca4}, and proved in~\cite{DWZ2}
(other proofs have been given in \cite{GHKK, Nagao}):
\begin{itemize}
\item[{\rm (SC)}]
in any quiver $\widetilde Q'$ obtained from~$\widetilde Q$ by a sequence of mutations
at the vertices in~$V$, 
no two vertices from the set $\overline V$ are connected by an arrow. 
\end{itemize}

\pagebreak[3]

\begin{remark}
The conventional version of (SC) asserts that $\widetilde Q'$ does not contain any 
arrows of the form $\bar u\to v\to \bar w$. 
(Also, the vertices in~$\overline V$ are declared frozen.) 
These two versions are equivalent to each~other: 
if $\widetilde Q'$ contains a configuration $\bar u\to v\to \bar w$,
then $\mu_v(\widetilde Q')$ has an arrow $\bar u\to \bar w$; conversely, 
the only way such an arrow could arise is via a configuration of this kind. 
\end{remark}

\begin{remark}
The statement (SC) is usually called ``sign coherence of $c$-vectors.''
There is a companion (arguably more important) statement called
``sign coherence of $\mathbf{g}$-vectors''~\cite{ca4}.  
As observed in~\cite{nakanishi-zelevinsky}, the latter easily follows from the former. 
\end{remark}

In spite of its elementary formulation, no simple proof of sign-coherence is presently known. 
All known proofs are highly nontrivial, 
and go far beyond the elementary combinatorics of quiver mutations. 
(The proof in \cite{DWZ2} relies on the existence of a nondegenerate potential for any quiver,
whereas the proof in \cite{GHKK} involves (coherent) scattering diagrams.) 

By contrast, the following fact is very easy to verify. 

\begin{lemma}
\label{lem:sc-hereditary}
The statement {\rm (SC)}, viewed as a property of a quiver~$Q$, is hereditary. 
\end{lemma}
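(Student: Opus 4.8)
The plan is to prove that property (SC) is hereditary by showing that if (SC) holds for a quiver $Q$, then it holds for any restriction $Q_I$. The essential tool will be the compatibility of restriction with mutation already recorded in Lemma~\ref{lem:restriction-mutation}, suitably applied to framed quivers.

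First I would set up the relationship between the framed quiver $\widetilde{Q_I}$ of a restriction and the framed quiver $\widetilde Q$ of the original quiver. Let $Q$ have vertex set $V$, let $I\subseteq V$, and let $\overline I=\{\bar v:v\in I\}$. The key observation is that the framed quiver $\widetilde{Q_I}$ is exactly the full subquiver of $\widetilde Q$ supported on the vertex subset $J=I\sqcup\overline I$: restricting $\widetilde Q$ to $J$ keeps the arrows of $Q$ among vertices of $I$ (which are the arrows of $Q_I$) together with the framing arrows $v\to\bar v$ for $v\in I$, and discards everything involving $V\setminus I$ or $\overline V\setminus\overline I$. Thus $\widetilde{Q_I}=(\widetilde Q)_J$.

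Next I would track what happens under a sequence of mutations at vertices of $I$. Suppose $\widetilde{Q_I}'$ is obtained from $\widetilde{Q_I}$ by mutating at vertices $i_1,\dots,i_r\in I$. Since each $i_t\in I\subseteq J$, repeated application of Lemma~\ref{lem:restriction-mutation} gives
\[
\widetilde{Q_I}' \;=\; \mu_{i_r}\cdots\mu_{i_1}\bigl((\widetilde Q)_J\bigr) \;=\; \bigl(\mu_{i_r}\cdots\mu_{i_1}(\widetilde Q)\bigr)_J \;=\; (\widetilde Q')_J,
\]
where $\widetilde Q'=\mu_{i_r}\cdots\mu_{i_1}(\widetilde Q)$ is obtained from $\widetilde Q$ by mutating at vertices of $I\subseteq V$. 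Now assume (SC) holds for $Q$. Then in $\widetilde Q'$ no two vertices of $\overline V$ are joined by an arrow; in particular no two vertices of $\overline I\subseteq\overline V$ are joined by an arrow in $\widetilde Q'$. Passing to the full subquiver $(\widetilde Q')_J$ can only delete arrows, never create them, so no two vertices of $\overline I$ are joined by an arrow in $\widetilde{Q_I}'$ either. This is precisely (SC) for $Q_I$, completing the argument.

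I do not expect a serious obstacle here; the statement is genuinely easy once the identification $\widetilde{Q_I}=(\widetilde Q)_J$ is made. The only point requiring mild care is the bookkeeping that framing commutes with restriction in the indicated way—specifically that restricting to $J=I\sqcup\overline I$ (rather than to some other vertex subset) is what reproduces the framed quiver of the restriction, and that the mutations permitted in (SC) for $Q_I$ are exactly mutations at $I$, which are a subset of those permitted for $Q$. Once these are pinned down, Lemma~\ref{lem:restriction-mutation} does all the work, and the final inclusion of arrow sets under restriction finishes the proof.
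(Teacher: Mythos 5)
Your proof is correct and follows essentially the same route as the paper's: identify the framed quiver of the restriction as a full subquiver of the framed quiver of the ambient quiver, invoke the commutation of restriction with mutation (Lemma~\ref{lem:restriction-mutation}), and observe that passing to a full subquiver cannot create arrows between frozen vertices. Your version merely makes the bookkeeping of $J=I\sqcup\overline I$ more explicit than the paper does.
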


\begin{proof}
Let $Q_2$ be a full subquiver of a quiver~$Q_1$.
Then the framed quiver $\widetilde Q_2$ is a full subquiver of~$\widetilde Q_1$.
Furthermore, any quiver~$\widetilde Q'_2$ obtained from $\widetilde Q_2$ 
via mutations at the vertices of~$Q_2$
is a full subquiver of the quiver~$\widetilde Q'_1$ obtained from $\widetilde Q_1$ via the same sequence of mutations.
Thus, if $\widetilde Q'_1$ does not contain arrows of the form $\bar u\to \bar v$,
then the same holds for~$\widetilde Q'_2$.  
\end{proof}

$\!$Lemma~\ref{lem:sc-hereditary} suggests a potential strategy 
towards a simpler proof of sign-coherence:
\begin{itemize}[leftmargin=.3in]
\item[(i)]
show (ideally, by a direct combinatorial argument) that the sign-coherence property~(SC), 
or some hereditary \linebreak[3] strengthening thereof, is mutation invariant; 
\item[(ii)]
show that this property holds for a particular universal collection of quivers. 
\end{itemize}
Unfortunately, we have been unable to successfully implement this strategy.

\section{Additive categorification}
\label{sec:categorification}

Roughly speaking, an \emph{(additive) categorification} of a quiver %or valued quiver 
$Q$ is a diagram in the shape of $Q$ in an abelian or triangulated category together with a rule for mutating the diagram as the quiver undergoes a mutation. 
There is extensive literature on this popular topic, mostly in the case when the quivers are acyclic. %\comment{cite papers} 
(There is also an alternative notion of a \emph{monoidal categorification}~\cite{leclerc-hernandez}
which we do not discuss here.) 

%There are various versions of the notion of categorification. Recently, Leclerc and Hernandez \cite{leclerc-hernandez} have introduced the monoidal categorification in which cluster variable correspond to irreducible representations of a quantum group and linear combinations of products of variables correspond to direct sums of tensor products of the corresponding representations. Here we discuss what is sometimes called ``additive categorification'' to distinguish it from this more recent monoidal categorification.

Existence of a categorification is a hereditary property: a categorification of~$Q$ gives rise to a categorification of any subquiver of a quiver mutation equivalent~to~$Q$. 
There are known examples of quivers which cannot be categorified in the traditional sense. 
Consequently, any $n$-universal quiver (for sufficiently large~$n$) cannot be categorified. 

In this section, we give a simple direct argument showing why universal quivers admit no Hom-finite categorification. We also note that universal quivers serve as a natural test case for any attempt to generalize the notion of categorification using Hom-infinite Jacobian algebras, as suggested by P.-G.~Plamondon~\cite{plamondon}.

%Categorification is also a problem in the skew symmetrizable case. However, we will stick to the skew symmetric case where unanswered questions remain.

Let us quickly review the basic ideas, sticking to the triangulated category setting. %, which is the most popular.
We will need a more general notion of a quiver which involves designating some of its vertices as \emph{frozen}. (Mutations at those vertices are forbidden.) 

From now on, we fix a field $K$ and work with $K$-categories. These are categories $\mathcal C$ in which $\Hom(X,Y)$ is a vector space over~$K$ for any objects $X,Y\in \mathcal C$, and composition is $K$-bilinear. We say that $\mathcal C$ is \emph{Hom-finite} if $\Hom(X,Y)$ is finite dimensional for any $X,Y\in \mathcal C$.
Any object $X=\bigoplus_{i=1}^nX_i$ in a $\operatorname{Hom}$-finite Krull-Schmidt category defines a finite-dimensional algebra $\operatorname{End}(X)$.

\begin{definition}[{\cite[Section~II.1]{BIRSc}}]
\label{def:BIRSc}
Let $\mathcal C$ be a $\Hom$-finite, Krull-Schmidt, 2-Calabi-Yau triangulated $K$-category.  
That is, all objects are direct sums of indecomposables, with canonical isomorphisms 
$\Hom(X,Y)\!\cong \!D\!\operatorname{Hom}(Y,X[2])$
where $D\!=\!\operatorname{Hom}_K(-,K)$ denotes the vector space dual. 
A \emph{cluster structure} on~$\mathcal C$ is a collection of 
objects---called \emph{extended clusters}---some of whose components are designated  \emph{frozen}; this collection must satisfy the following conditions:
\begin{itemize}[leftmargin=.3in]
\item[(a)]
For every nonfrozen component $T_k$ of an extended cluster $T$, there is a unique, up to isomorphism, indecomposable object $T_k'$ such that $T'=T/T_k\oplus T_k'$ is another extended cluster. 
%\item[(b)]
Moreover $T_k$ and $T_k'$ are related by two distinguished triangles
\begin{align*}
T_k[-1]\to T_k'\to B\to &T_k \,, \\
&T_k\to B'\to T_k'\to T_k[1] \,;
\end{align*}
%$T_k\to B'\to T_k'\to T_k[1]$ and $T_k[-1]\to T_k'\to B\to T_k$, 
here $B$ (resp.~$B'$) is the minimal right (resp.~left) $\operatorname{add}(T/T_k)$-approximation~of~$T_k$.
We say that $T'$ is obtained from $T$ by a \emph{mutation} at~$T_k$. 

%\item[(b)]
%Any two extended clusters are related to each other by a sequence of mutations. 

\item[(b)]
For every extended cluster $T$, define the quiver $Q_T$ as follows. 
The vertices~$v_k$ of $Q_T$ are in bijection with the components~$T_k$ of~$T$.
For each pair of vertices $v_i$ and~$v_j$, 
at least one of them nonfrozen, 
draw  $\dim(\Hom(T_i,T_j)/I_{ij})$ arrows $v_i\to v_j$
where $I_{ij}$ is generated by the morphisms which factor through the remaining objects~$T_k$,
for $k\notin\{i,j\}$. 
The quiver $Q_T$ constructed in this way should have no loops and no 2-cycles.  
%of the endomorphism ring $\operatorname{End}(T)$ of any extended cluster $T$ (with all arrows between frozen vertices deleted) 
%Here the quiver $Q_T$ has vertices $1,\cdots,n$ and $b_{ij}$ arrows $i\to j$, where $b_{ij}=\dim \operatorname{Hom}(T_i,T_j)$ for $i\neq j$.

\item[(c)]
The quiver $Q_{T'}$ of $T'=T/T_k\oplus T_k'$ is obtained from $Q_T$ by mutating at~$v_k$.
\end{itemize}
When conditions (a)--(c) are satisfied, the resulting cluster structure on the category~$\mathcal{C}$
is called a (Hom-finite, additive) \emph{categorification} of any of the quivers~$Q_T$.

Any such categorification can be restricted to the set of extended clusters which are \emph{reachable} from a given extended cluster~$T$ by a sequence of mutations. 
These reachable clusters naturally correspond to the seeds of the cluster algebra defined by the quiver~$Q_T$, and 
the quivers appearing in the restricted categorification 
are precisely the quivers mutation equivalent to~$Q_T$.  
\end{definition}

\pagebreak[3]

%Usually we need to assume that our triangulated category is Hom-finite.

\begin{theorem}
\label{th:universal-no-categorification}
Let $Q$ be one of the quivers shown in Figures~\ref{fig:extended-somos-4}--\ref{fig:double-4-cycle}. 
Then:
\begin{itemize}[leftmargin=.3in]
\item
the quiver $Q$ does not allow a Hom-finite categorification; 
\item
for any potential $S$ for $Q$, the Jacobian algebra $J(Q,S)$ is infinite-dimensional.
\end{itemize}
\end{theorem}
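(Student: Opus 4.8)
The plan is to derive both statements from the single key feature of these two quivers recorded in Lemma~\ref{lem:6-vertex-quivers}: by mutating \emph{only} at the four vertices labeled $1,2,3,4$, and never at $u$ or~$v$, one can produce a quiver with arbitrarily many arrows between $u$ and~$v$. The whole argument turns on the fact that such mutations leave the ``$u$'' and ``$v$'' data untouched, while a Hom-finite categorification can only support a bounded number of arrows between two fixed objects.

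First I would prove the first bullet by contradiction. Suppose $\mathcal C$ is a Hom-finite categorification of~$Q$ in the sense of Definition~\ref{def:BIRSc}, realized by an extended cluster $T$ with $Q_T\cong Q$; let $T_u$ and $T_v$ be the components of~$T$ corresponding to the vertices $u$ and~$v$. Applying condition~(c) along the mutation sequence of Lemma~\ref{lem:6-vertex-quivers} (a sequence of categorical mutations at the components other than $T_u$ and $T_v$), I obtain a family of extended clusters $T'$ whose quivers $Q_{T'}$ realize every quiver in that mutation sequence. Since none of these mutations is performed at $T_u$ or~$T_v$, the \emph{same} objects $T_u,T_v\in\mathcal C$ occur in each $T'$. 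By condition~(b), the number of arrows between the vertices of $Q_{T'}$ corresponding to $u$ and~$v$ equals $\dim(\Hom(T_u,T_v)/I_{uv})$ or $\dim(\Hom(T_v,T_u)/I_{vu})$, hence is bounded above by $\dim\Hom(T_u,T_v)+\dim\Hom(T_v,T_u)$, a fixed finite number by Hom-finiteness. This contradicts Lemma~\ref{lem:6-vertex-quivers}, which makes this arrow count unbounded along the sequence.

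The second bullet I would deduce from the first via Amiot's construction of generalized cluster categories. Every quiver admits potentials, so the statement is non-vacuous; fix a potential~$S$ on~$Q$ and suppose the Jacobian algebra $J(Q,S)$ were finite-dimensional. Then Amiot's theorem produces a Hom-finite, Krull--Schmidt, $2$-Calabi--Yau triangulated category containing a cluster-tilting object $T$ with $\operatorname{End}(T)\cong J(Q,S)$ and $Q_T\cong Q$, and the associated cluster structure is a Hom-finite categorification of~$Q$ in the sense of Definition~\ref{def:BIRSc}. This contradicts the first bullet, so $J(Q,S)$ must be infinite-dimensional for every potential~$S$.

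The hard part, and the place needing the most care, is the first bullet: one must confirm that a categorical mutation at a component $T_k$ with $k\notin\{u,v\}$ genuinely fixes the objects $T_u$ and $T_v$, so that the arrow count between them is governed by the fixed finite-dimensional spaces $\Hom(T_u,T_v)$ and $\Hom(T_v,T_u)$ regardless of how the ideals $I_{uv}$, $I_{vu}$ evolve (passing to the quotient can only lower the dimension). For the second bullet the technical input is simply the precise statement of Amiot's theorem together with the verification that the resulting generalized cluster category meets every hypothesis of Definition~\ref{def:BIRSc}; these hypotheses hold exactly when $J(Q,S)$ is finite-dimensional, which is the assumption being contradicted.
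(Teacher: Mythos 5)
Your proposal is correct and follows essentially the same route as the paper's proof: both bullets are derived from the unbounded arrow count between $u$ and $v$ under mutations avoiding those vertices (Lemma~\ref{lem:6-vertex-quivers}), which forces $\Hom(T_u,T_v)$ to be infinite-dimensional, and the second bullet is then deduced via Amiot's construction exactly as in the paper.
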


\begin{proof}
In a categorification of~$Q$, the initial cluster consists of objects corresponding to the vertices of~$Q$. When we mutate at the vertices other than $u$ and~$v$ (see Figures~\ref{fig:extended-somos-4}--\ref{fig:double-4-cycle}), the objects $T_u$ and $T_v$ corresponding to $u$ and~$v$ remain unchanged. After any such sequence of mutations, the number of arrows from $u$ to $v$ equals the dimension of a certain quotient of $\Hom(T_u,T_v)$, see Definition~\ref{def:BIRSc}(b). 
In particular, this number (which depends on the mutation sequence) cannot exceed $\dim\Hom(T_u,T_v)$. 
Since this number is unbounded, $\Hom(T_u,T_v)$ must be infinite-dimensional, so the categorification is not Hom-finite. 

Suppose $J(Q,S)$ is finite-dimensional. Under this condition, 
C.~Amiot~\cite{amiot} constructed a cluster category~$\mathcal C(Q,S)$,
and showed that $\mathcal C(Q,S)$ is a Hom-finite categorification of the cluster algebra corresponding to~$Q$.
Since such a Hom-finite categorification does not exist, we conclude that $\dim J(Q,S)=\infty$. 
\end{proof}

The conclusions of Theorem~\ref{th:universal-no-categorification} hold for any quiver~$Q$ satisfying the conditions in Lemma~\ref{lem:gluing}, and for any quiver which is mutation equivalent to a quiver containing such~$Q$ as a full subquiver.

\end{document}